\definecolor{labelkey}{rgb}{0,0.08,0.45}
\definecolor{refkey}{rgb}{0,0.6,0.0}
\definecolor{Brown}{rgb}{0.45,0.0,0.05}
\definecolor{lime}{rgb}{0.00,0.8,0.0}
\definecolor{lblue}{rgb}{0.5,0.5,0.99}
\newcommand{\menge}[2]{\big\{{#1}~\big |~{#2}\big\}}
\newcommand{\To}{\ensuremath{\rightrightarrows}}
\newcommand{\fenv}[1]%
{\ensuremath{\,\overrightarrow{\operatorname{env}}_{#1}}}
\newcommand{\benv}[1]%
{\ensuremath{\,\overleftarrow{\operatorname{env}}_{#1}}}
\newcommand{\scal}[2]{\left\langle{#1},{#2}  \right\rangle}
\newcommand{\RR}{\ensuremath{\mathbb R}}
\newcommand{\RPP}{\ensuremath{\mathbb{R}_{++}}}
\newcommand{\NN}{\ensuremath{\mathbb N}}
\newcommand{\dom}{\ensuremath{\operatorname{dom}}}
\newcommand{\prox}{\ensuremath{\operatorname{Prox}}}
\newcommand{\ran}{\ensuremath{\operatorname{ran}}}
\newcommand{\conv}{\ensuremath{\operatorname{conv}\,}}
\newcommand{\aff}{\ensuremath{\operatorname{aff}\,}}
\newcommand{\Id}{\ensuremath{\operatorname{Id}}}
\newcommand{\bR}{\ensuremath{{{R}}}}
\newcommand{\bT}{\ensuremath{{{T}}}}
\newcommand{\bJ}{\ensuremath{{{J}}}}
\newtheorem{theorem}{Theorem}[section]
\newtheorem{lem}[theorem]{Lemma}
\newtheorem{cor}[theorem]{Corollary}
\newtheorem{proposition}[theorem]{Proposition}
\newtheorem{prop}[theorem]{Proposition}
\newtheorem{definition}[theorem]{Definition}
\newtheorem{defn}[theorem]{Definition}
\newtheorem{thm}[theorem]{Theorem}
\theoremstyle{plain}{\theorembodyfont{\rmfamily}
}
\theoremstyle{plain}{\theorembodyfont{\rmfamily}
}
\theoremstyle{plain}{\theorembodyfont{\rmfamily}
}
\theoremstyle{plain}{\theorembodyfont{\rmfamily}
\newtheorem{example}[theorem]{Example}}
\newtheorem{ex}[theorem]{Example}
\newtheorem{fact}[theorem]{Fact}
\theoremstyle{plain}{\theorembodyfont{\rmfamily}
}
\newtheorem{rem}[theorem]{Remark}
\providecommand{\ds}{\displaystyle}
\providecommand{\abs}[1]{\lvert#1\rvert}
\providecommand{\norm}[1]{\lVert#1\rVert}
\providecommand{\bk}[1]{\left(#1\right)}
\providecommand{\stb}[1]{\left\{#1\right\}}
\providecommand{\innp}[1]{\langle#1\rangle}
\providecommand{\RA}{\Rightarrow}
\providecommand{\RRR}{\left]-\infty,+\infty\right]}
\providecommand{\lam}{\lambda}
\providecommand{\RR}{\mathbb{R}}
\providecommand{\aff}{\operatorname{aff}}
\providecommand{\conv}{\operatorname{conv}}
\providecommand{\ran}{\operatorname{ran}}
\providecommand{\intr}{\operatorname{int}}
\providecommand{\dom}{\operatorname{dom}}
\providecommand{\gr}{\operatorname{gra}}
\providecommand{\gra}{\operatorname{gra}}
\providecommand{\Id}{\operatorname{{ Id}}}
\providecommand{\fady}{\varnothing}
\providecommand{\rras}{\rightrightarrows}
\providecommand{\To}{\rightrightarrows}
\providecommand{\DD}{{D}}
\providecommand{\NN}{\mathbb{N}}
\providecommand{\BB}[2]{\operatorname{ball}(#1;#2)}
\providecommand{\gr}{\operatorname{gra}}
\providecommand{\ran}{\operatorname{ran}}
\providecommand{\rec}{\operatorname{rec}}
\providecommand{\Id}{\operatorname{Id}}
\providecommand{\PR}{\operatorname{P}}
\providecommand{\pt}{{\partial}}
\providecommand{\inns}[2][w]{#2_{#1}}
\newcommand{\outs}[2][w]{{_#1}#2}
\providecommand{\T}{{ T}}
\providecommand{\D}{ {D}}
\providecommand{\E}{ {V}}
\providecommand{\U}{ {U}}
\providecommand{\uu}{ {u}}
\providecommand{\rra}{\rightrightarrows}
\providecommand{\fady}{\varnothing}
\providecommand{\ri}{\operatorname{ri}}
\providecommand{\RR}{\mathbb{R}}
\providecommand{\NN}{\mathbb{N}}
\definecolor{myblue}{rgb}{.8, .8, 1}
  \newcommand*\mybluebox[1]{%
    \colorbox{myblue}{\hspace{1em}#1\hspace{1em}}}
\begin{document}

%
\title{\textsc
On the range of the Douglas--Rachford operator}

\author{
Heinz H.\ Bauschke\thanks{
Mathematics, University
of British Columbia,
Kelowna, B.C.\ V1V~1V7, Canada. E-mail:
\texttt{heinz.bauschke@ubc.ca}.},
Warren L.\ Hare\thanks{
Mathematics,
University of British Columbia,
Kelowna, B.C.\ V1V~1V7, Canada.
E-mail:  \texttt{warren.hare@ubc.ca}.},
~and Walaa M.\ Moursi\thanks{
Mathematics, University of
British Columbia,
Kelowna, B.C.\ V1V~1V7, Canada. E-mail:
\texttt{walaa.moursi@ubc.ca}.}}

\date{July 29, 2014}
\maketitle

\vskip 8mm

\begin{abstract} \noindent
The problem of finding a minimizer of the sum of two convex functions ---
or, more generally, that of finding a zero of the sum of two
maximally monotone operators --- is of central importance
in variational analysis. Perhaps the most popular method of
solving this problem is the Douglas--Rachford splitting method.
Surprisingly, little is known about the range of the
Douglas--Rachford operator.

In this paper, we set out to study this range systematically.
We prove that for $3*$ monotone operators a very pleasing formula
can be found that reveals the range to be nearly equal to a
simple set involving the domains and ranges of the underlying
operators. A similar formula holds for the range of the
corresponding displacement mapping. 
We discuss applications to subdifferential operators, to the infimal
displacement vector, and to firmly nonexpansive mappings. 
Various examples and counter-examples are presented, including
some concerning the celebrated Brezis--Haraux theorem. 
\end{abstract}

{\small
\noindent
{\bfseries 2010 Mathematics Subject Classification:}
{Primary 
47H05, 
47H09, 
90C25; 
Secondary 
90C46, 
47H14, 
49M27, 
49M29, 
49N15. 
}

\noindent {\bfseries Keywords:}
Attouch--Th\'era duality,
Brezis--Haraux theorem, 
convex function,
displacement mapping, 
Douglas--Rachford splitting operator,
firmly nonexpansive mapping, 
maximally monotone operator,
nearly convex set, 
near equality,
normal problem,
range,
subdifferential operator.
}
\section{Introduction}
Unless otherwise stated,
throughout this paper
\begin{empheq}[box=\mybluebox]{equation*}
X~~ \text{is a finite-dimensional real Hilbert space}
\end{empheq} 
with inner product $\innp{\cdot,\cdot}$ 
and associated norm $\norm{\cdot}$. 
Let $A:X\rras X$ be a set-valued operator. 
We say that $A$ is \emph{monotone} if 
$ \innp{x-y,u-v}\ge 0$ for all pairs $(x,u)$
and $(y,v)$ in $\gra A$, the \emph{graph} of $A$. 
A monotone operator 
$A$ is \emph{maximally monotone} if 
the graph of $A$, denoted $\gr A$,
 cannot be properly 
extended without destroying 
the monotonicity of $A$. 
Monotone operators are of considerable
importance 
in optimization and variational analysis;
see, e.g., 
\cite{BC2011},
\cite{Brezis},
\cite{BurIus}, 
\cite{Rock70},
\cite{Rock98},
\cite{Simons1},
\cite{Simons2},
\cite{Zalinescu},
\cite{Zeidler2a},
\cite{Zeidler2b},
and \cite{Zeidler1}. 
It is well known that the 
subdifferential operator
of a proper lower semicontinuous convex
function is maximally monotone. 
Subdifferential operators also belong to the class 
of $3^*$ monotone 
(also known as  rectangular) operators which
was introduced by Brezis and and Haraux \cite{Br-H}; see also 
\cite{JAT2012} and 
\cite{BYW12}.
The sum of two maximally monotone operators
is monotone; maximality, however, is guaranteed only
in presence of a constraint
qualification \cite{Rock:mm}.
The problem of finding the zeros of the sum 
of two maximally monotone operators is an
active topic in optimization as it captures 
the key problem of minimizing a sum of two convex functions. 
More broadly, 
from an optimization perspective, 
constrained optimization problems, convex
feasibility problem as well as
many other optimization 
problems can be interpreted and recast as
the problem of finding the zeros of the sum of two 
 maximally monotone operators.
Most methods for solving
the sum problem are splitting algorithms;
the most popular of which is the celebrated 
Douglas--Rachford method,
which was adopted to the monotone operator framework
by Lions and Mercier \cite{L-M97}. 
(See also e.g.\ \cite{BC2011}, 
\cite{Comb09}, 
\cite{Comb07},
\cite{Comb08}, 
and \cite{EckBer} 
for further results on and applications of this algorithm.)

Let $A\colon X\rras X$ be maximally monotone. 
Recall that
the \emph{resolvent} of 
$A$ is $\bJ_A = (\Id+A)^{-1}$,
where $\Id$ denotes 
the identity operator. 
Moreover, if $A$ is maximally monotone, 
then $\bJ_A$ is single-valued, firmly nonexpansive,
and maximally monotone. 
The \emph{reflected resolvent} of $A$ 
is $R_A = 2J_A-\Id$. 
Now let $B\colon X\rras X$ be also maximally monotone. 
The \emph{Douglas--Rachford splitting operator}
for the pair $(A,B)$ is 
\begin{equation}
T_{(A,B)}:=\tfrac{1}{2}\Id+\tfrac{1}{2}\bR_B\bR_A =
\Id-J_A+J_BR_A.
\end{equation} 
One main goal of this work is to analyze 
the ranges of $T_{(A,B)}$ and of 
the \emph{displacement mapping} $\Id- T_{(A,B)}$.
It is known 
(see, e.g., \cite[Corollary~2.14]{Sicon2014}
or \cite[Proposition~4.1]{EckThesis}) that 
\begin{equation}
\ran (\Id-\bT_{(A,B)})\subseteq 
(\dom A -\dom B)\cap (\ran A +\ran B). 
\end{equation}
It is natural to inquire whether or not this is a mere inclusion or
perhaps even an inequality. 
In general, this inclusion is strict --- sometimes even extremely so
in the sense that $\ran (\Id-\bT_{(A,B)})$ may be a singleton
while $(\dom A -\dom B)\cap (\ran A +\ran B)$ may be the entire
space; see Example~\ref{ex:1}. 
This likely has discouraged efforts to obtain a better
description of these ranges.
However, and somewhat surprisingly, we are able to obtain
--- under fairly mild assumptions on $A$ and $B$ --- 
the simple and elegant formulae 
\begin{empheq}[box=\mybluebox]{equation}
\ran (\Id-T_{(A,B)})\simeq (\dom A-\dom B)\cap(\ran A+\ran B)
\end{empheq}
and 
\begin{empheq}[box=\mybluebox]{equation}
\ran T_{(A,B)}\simeq (\dom A-\ran B)\cap(\ran A+\dom B),
\end{empheq}
where the ``near equality" of two sets $C$ and $D$, 
denoted by $C\simeq D$,
means that the two sets have the same 
relative interior
(the interior with respect to the closed affine hull)
 and the same closure.
When $A=\pt f$ and $B=\pt g$ are subdifferential operators, which
is the key setting in convex optimization, the above formulae
can be written as 
 \begin{empheq}[box=\mybluebox]{equation}
\ran (\Id-T_{(\pt f, \pt g)})\simeq (\dom f-\dom g)\cap(\dom f^*+\dom g^*)
\end{empheq}
and 
\begin{empheq}[box=\mybluebox]{equation}
\ran T_{(\pt f, \pt g)}\simeq (\dom f-\dom g^*)\cap(\dom f^*+\dom g).
\end{empheq}
These results are interesting because
the problem of finding a point in $(A+B)^{-1}(0)$ has
a solution if and only if 
$0\in \ran (\Id-T_{(A,B)})$
 (see, e.g., \cite[Lemma~2.6(ii)]{Comb04}). 
It also provides information on finding
the infimal displacement vector 
that defines the normal problem
recently introduced in \cite{Sicon2014}.
Moreover, $\ran T_{(A,B)}$ contains the set of fixed 
points of $T_{(A,B)}$.
Using the correspondence
between maximally monotone operators and
firmly nonexpansive mappings (see Fact~\ref{T:JA}),
we are able to reformulate our results 
for firmly nonexpansive mappings.
In addition to our main results,
we show that 
the well-known conclusion of Brezis-Haraux Theorem 
\cite{Br-H} is optimal in the sense that actual equality
may fail 
(see Example~\ref{Br-H-opt1} and Proposition~\ref{cn:ex5}).
Our investigation relies on the 
the class of $3^*$ monotone operators 
(see \cite{Br-H}),
Attouch--Th\'{e}ra duality (see \cite{AT}), and 
the associated 
normal problem (see \cite{Sicon2014}).

The remainder of this paper is organized as follows.
Section~\ref{S:M:T} contains a brief collection of
facts on monotone operators
and their resolvents, as well as  
on firmly nonexpansive mappings. 
In Section~\ref{S:eq:nc}, 
we review the notions of
 near convexity
and near equality, and we also present some new results. 
Section~\ref{S:AT:T} is concerned with the 
Attouch--Th\'{e}ra duality, the normal problem, 
 and the Douglas--Rachford splitting operator. 
Our main results are presented in  Section~\ref{S:Main},
while applications and special cases are provided in 
Section~\ref{S:app}. 
In Section~\ref{S:infd}, we offer some
results that are valid in a possibly infinite-dimensional
Hilbert space. 
We also provide various examples and counterexamples. 

We conclude this section with some comments on notation. 
We use $\PR_C$ and $N_C$
to denote projector and the normal cone operator 
associated with the nonempty closed convex subset $C$ of $X$.
The recession cone of $C$ is 
$\rec C :=\menge{x\in X}{x+C\subseteq C}$,
and the polar cone of $C$ is
$C^\ominus:=\menge{u\in X}{\sup\innp{C,u}\le 0}$.
We use $\BB{x}{r}$
to denote the closed
ball in $X$ centred at $x\in $ with radius $r>0$.
For a subset $S$ of $X$,
the relative interior of the set $S$ 
is 
$\ri S:=\menge{s\in S}{(\exists r>0)
\BB{s}{r}\cap\overline{\aff} S\subseteq S}$,
where $\aff S$ denotes the affine hull of $S$. 
All other notation is standard and follows, e.g., 
\cite{BC2011}. 

\section{Monotone operators and firmly nonexpansive mappings }

\label{S:M:T}

In this short section, we review some
key results on monotone operators and firmly nonexpansive
mappings that are needed subsequently.
(See also \cite{BC2011} for further results.)

\colorlet{sky}{white!85!blue}
\begin{fact}{\rm(See, e.g., \cite[Corollary~12.44]{Rock98}.)}
\label{maxm_rock}
Let $A:X\rra X$ and $B:X\rra X$ 
be maximally monotone such that 
$\ri \dom A\cap\ri \dom B\neq \fady$.
Then $A+B$ is maximally monotone.
\end{fact}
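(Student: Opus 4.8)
The plan is to prove this by Minty's surjectivity characterization together with an approximation argument, with the constraint qualification entering only at the step where one controls the approximating sequence. Since $A$ and $B$ are monotone, their sum $A+B$ is plainly monotone, so the entire burden is to upgrade monotonicity to maximality. First I would invoke Minty's theorem: in the present (finite-dimensional Hilbert) setting a monotone operator $M$ is maximally monotone if and only if $\ran(\Id+M)=X$. Hence it suffices to show $\ran(\Id+A+B)=X$. Fixing $z\in X$ and replacing $A$ by the shift $A-z$ (still maximally monotone, with the same domain), the task reduces to producing a zero of $\Id+A+B$, i.e. a point $x$ with $0\in x+Ax+Bx$.

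Next I would regularize the more troublesome summand. For $\lambda>0$ let ${}^{\lambda}B=\lambda^{-1}(\Id-J_{\lambda B})$ be the Yosida approximation, which is single-valued, monotone, Lipschitz, and everywhere defined on $X$. Then $\Id+A+{}^{\lambda}B$ is a sum of a maximally monotone operator with a full-domain Lipschitz monotone operator; maximality of such a sum is the easy case (no constraint qualification is needed when one summand has full domain), so by Minty again it is surjective, and there exists $x_\lambda$ with $0\in x_\lambda+Ax_\lambda+{}^{\lambda}Bx_\lambda$. Writing $a_\lambda=-x_\lambda-{}^{\lambda}Bx_\lambda\in Ax_\lambda$ then records the approximate solution as a genuine graph point of $A$.

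The decisive and hardest step is to send $\lambda\downarrow 0$. This requires a priori bounds on $x_\lambda$, on the selections $a_\lambda\in Ax_\lambda$, and on ${}^{\lambda}Bx_\lambda$, and this is exactly where the hypothesis $\ri\dom A\cap\ri\dom B\neq\emp$ is indispensable: choosing $x_0$ common to both relative interiors, where maximally monotone operators are locally bounded, and testing the monotonicity inequalities $\scal{x_\lambda-x_0}{a_\lambda-a_0}\ge 0$ and $\scal{x_\lambda-x_0}{{}^{\lambda}Bx_\lambda-b_0}\ge 0$ against nearby graph points, one extracts uniform control of all the norms. Having secured boundedness, finite-dimensionality lets me pass to a convergent subsequence $x_\lambda\to\bar x$ with $a_\lambda\to\bar a$ and ${}^{\lambda}Bx_\lambda\to\bar b$; since the graph of a maximally monotone operator is closed one gets $\bar a\in A\bar x$, while the standard convergence properties of the Yosida approximation give $\bar b\in B\bar x$, whence $0\in\bar x+A\bar x+B\bar x$, producing the required zero.

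I would flag two points. The a priori bound of the previous paragraph is the genuine obstacle; everything else is soft. An alternative that bypasses the approximation entirely is the representative-function route: one works with the Fitzpatrick (or an autoconjugate) representatives $F_A$ and $F_B$, for which the qualification $\ri\dom A\cap\ri\dom B\neq\emp$ is precisely the Fenchel--Rockafellar condition guaranteeing a zero-duality-gap identity, and maximality of $A+B$ then drops out of convex-analytic calculus. In finite dimensions either route is available, and I would present the Minty-plus-Yosida version as the more self-contained one.
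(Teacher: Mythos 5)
The paper does not prove this Fact --- it cites it (Rockafellar--Wets, Corollary~12.44) --- so the only question is whether your argument stands on its own, and it does not: there is a genuine gap at exactly the step you yourself call decisive. Your a priori bound rests on the claim that a maximally monotone operator is locally bounded at points of the relative interior of its domain. This is false unless the domain has nonempty interior: by Rockafellar's local boundedness theorem, a maximally monotone operator $B$ is locally bounded at a point of $\overline{\dom}\, B$ if and only if that point lies in $\inte \dom B$. Indeed, when $\dom B$ is flat, say contained in an affine subspace with parallel linear subspace $V_B \neq X$, then for every $x\in\dom B$ one has $Bx+V_B^\perp=Bx$ (because $B+N_{\overline{\dom}\,B}$ is a monotone extension of $B$, hence equals $B$ by maximality), so already the image of the single point $x$ is unbounded. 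Consequently the standard device --- pairing the monotonicity inequality of ${}^{\lambda}B$ against graph points $(x_0+u,\cdot)$ with $u$ ranging over a ball of directions --- is unavailable in the normal directions: graph points of $B$ only exist for $u\in V_B$, so you control only $P_{V_B}\bigl({}^{\lambda}Bx_\lambda\bigr)$, and nothing in your argument controls the components of ${}^{\lambda}Bx_\lambda$ (equivalently of $a_\lambda$) along $V_B^\perp$.

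This is not a removable technicality; it is precisely the difference between the statement to be proved and the theorem your argument does prove. Under Rockafellar's classical condition $\dom A\cap\inte\dom B\neq\fady$, your Minty-plus-Yosida scheme is exactly Rockafellar's 1970 proof and is correct. The relative-interior hypothesis exists to cover flat domains: take $A=N_U$ and $B=N_W$ with $U,W$ two crossing lines in $\RR^2$, so that $\ri U\cap\ri W$ is a single point while $\inte U=\inte W=\fady$; moreover no reduction to a smaller subspace restores interiority, since the span of the two domains is all of $\RR^2$. Any correct proof must handle this configuration, and yours, as written, does not --- repairing it requires genuinely more work, e.g.\ tracking tangential and normal components separately using the absorption identities $Ax=Ax+V_A^\perp$ and $Bx=Bx+V_B^\perp$. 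The one-sentence alternative you flag at the end (Fitzpatrick representatives plus Fenchel--Rockafellar duality, which in finite dimensions runs under relative-interior qualifications) is the route that actually proves the Fact as stated; if you want a complete self-contained argument, that is the one to develop, not the Yosida route.
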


\begin{fact}{\rm (See, e.g., \cite[Proposition~23.2(i)]
{BC2011}.)}\label{F:res:ran}
Let $A:X\rras X$ be maximally monotone.
Then $\dom A=\ran \bJ_A$.
\end{fact}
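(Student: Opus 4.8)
The plan is to unwind the definition of the resolvent $\bJ_A=(\Id+A)^{-1}$ and to exploit the elementary fact that, for any set-valued relation, the range of its inverse coincides with its domain. Concretely, I would reduce the claimed identity $\dom A=\ran\bJ_A$ to the chain $\ran\bJ_A=\ran(\Id+A)^{-1}=\dom(\Id+A)=\dom A$, and then justify each equality from the graph-level definitions.

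First I would record the graph-level description. By definition of the inverse, $x=\bJ_A z$ holds precisely when $(z,x)\in\gra(\Id+A)^{-1}$, equivalently $(x,z)\in\gra(\Id+A)$, i.e.\ $z\in x+Ax$, i.e.\ $z-x\in Ax$. Consequently, $x\in\ran\bJ_A$ if and only if there exists $z\in X$ with $z-x\in Ax$. Next I would carry out the two inclusions simultaneously. Setting $u:=z-x$, the condition ``there exists $z$ with $z-x\in Ax$'' is equivalent to ``there exists $u\in Ax$'', i.e.\ to $Ax\neq\emp$, which is exactly the statement that $x\in\dom A$. Thus $x\in\ran\bJ_A$ forces $Ax\neq\emp$; conversely, if $x\in\dom A$ I pick any $u\in Ax$ and set $z:=x+u$, so that $z-x=u\in Ax$ and hence $x=\bJ_A z\in\ran\bJ_A$. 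The final equality $\dom(\Id+A)=\dom A$ holds because $\Id$ has full domain, so $(\Id+A)x\neq\emp\Leftrightarrow Ax\neq\emp$.

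I do not expect a genuine obstacle here: the identity is essentially set-theoretic, following from the definition of the inverse relation together with $\dom(\Id+A)=\dom A$, and would in fact hold for an arbitrary operator $A$. The role of maximal monotonicity lies not in the range identity but in the surrounding structure --- via Minty's theorem it guarantees that $\Id+A$ is surjective, so that $\bJ_A$ is a single-valued mapping defined on all of $X$ (as recalled in the introduction). That additional information is not needed for $\dom A=\ran\bJ_A$. The only point requiring a little care is to track set-valued inverses correctly at the graph level, rather than treating $\bJ_A$ as a function from the outset.
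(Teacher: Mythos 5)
Your proof is correct: the paper states this result as a Fact with a citation to \cite[Proposition~23.2(i)]{BC2011} and gives no proof of its own, and your set-theoretic argument $\ran \bJ_A=\ran(\Id+A)^{-1}=\dom(\Id+A)=\dom A$ is precisely the standard argument behind the cited reference. Your side remark is also accurate: maximal monotonicity plays no role in this identity, serving only (via Minty's theorem) to make $\bJ_A$ single-valued and everywhere defined.
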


Recall 
the inverse resolvent identity 
(see, e.g., \cite[Lemma~12.14]{Rock98})
\begin{equation}\label{inv:res}
\bJ_A+\bJ_{A^{-1}}=\Id.
\end{equation}
Applying Fact~\ref{F:res:ran}
to $A^{-1}$ and using \eqref{inv:res}, we obtain
\begin{equation} \label{inv:r:d}
\ran A=\dom A^{-1}=\ran \bJ_{A^{-1}}=\ran (\Id-\bJ_A).
\end{equation}

\begin{fact}[Minty parametrization]{\rm(See \cite{Minty}.)}
Let $A:X\rras X$ be maximally monotone. 
Then 
$\gra A \to X :(x,u)\to x+u$ 
is a continuous bijection, with continuous inverse
$x\mapsto(\bJ_A x,x-\bJ_A x)$, and 
\begin{equation}\label{Min:par}
\gra A=\menge{(\bJ_A x,x-\bJ_A x)}{x\in X}.
\end{equation}
\end{fact}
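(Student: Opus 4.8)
The plan is to verify the three assertions in turn — that $\phi\colon(x,u)\mapsto x+u$ is a bijection from $\gra A$ onto $X$, that its inverse is $x\mapsto(\bJ_A x,x-\bJ_A x)$, and that both maps are continuous — by exploiting the already-recorded properties of the resolvent, namely that $\bJ_A=(\Id+A)^{-1}$ is single-valued and firmly nonexpansive. Continuity of $\phi$ itself is immediate, since addition is continuous.

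First I would settle injectivity, which is the purely monotone part. Suppose $(x,u)$ and $(y,v)$ lie in $\gra A$ with $x+u=y+v$. Then $x-y=v-u=-(u-v)$, and feeding this into the monotonicity inequality $\langle x-y,u-v\rangle\ge 0$ gives $-\norm{u-v}^2\ge 0$, forcing $u=v$ and hence $x=y$. Thus $\phi$ is injective.

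For surjectivity and the identification of the inverse, I would unwind the definition of the resolvent: fix $z\in X$ and set $x:=\bJ_A z$. By definition of $\bJ_A=(\Id+A)^{-1}$ this means $z\in x+Ax$, that is, $u:=z-x=z-\bJ_A z\in Ax$, so $(\bJ_A z,\,z-\bJ_A z)\in\gra A$ and $\phi(\bJ_A z,\,z-\bJ_A z)=z$. The key point — that $\bJ_A z$ is defined for \emph{every} $z\in X$, i.e.\ that $\dom \bJ_A=\ran(\Id+A)=X$ — is exactly the surjectivity half of Minty's classical theorem, and this is the one genuinely substantial ingredient; everything else is bookkeeping. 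Granting it, $\phi$ is onto and the displayed map is a right inverse, which together with injectivity makes it the two-sided inverse. Continuity of this inverse then follows because $\bJ_A$ is firmly nonexpansive, hence nonexpansive and therefore Lipschitz continuous, so that $z\mapsto(\bJ_A z,\,z-\bJ_A z)$ is continuous.

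Finally, the parametrization \eqref{Min:par} is just a restatement of the fact that $\phi$ is a bijection with the stated inverse: applying $\phi^{-1}$ to all of $X$ sweeps out $\gra A$, so $\gra A=\menge{(\bJ_A x,\,x-\bJ_A x)}{x\in X}$. The main obstacle, as flagged, is the surjectivity of $\Id+A$; if one did not wish to invoke Minty's theorem as a black box, one would have to prove it separately — for instance via a Banach fixed-point argument applied to the firmly nonexpansive map $\Id-\bJ_A$, or by the standard convex-analytic route — but since the result is recorded here as a cited Fact, I would simply cite it and reduce the remaining content to the elementary monotonicity and continuity steps above.
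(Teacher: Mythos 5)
Your proof is correct, and it rests on the same foundation as the paper's treatment: the paper states this as a Fact with a citation to Minty and supplies no proof of its own, and you correctly isolate the single deep ingredient --- surjectivity of $\Id+A$, i.e.\ $\dom \bJ_A=X$ --- as precisely the content of Minty's cited theorem, while the remaining steps (injectivity of $(x,u)\mapsto x+u$ from monotonicity, since $x+u=y+v$ forces $-\norm{u-v}^2\ge\langle x-y,u-v\rangle\ge 0$; identification of the inverse by unwinding $\bJ_A=(\Id+A)^{-1}$; continuity of the inverse from nonexpansiveness of $\bJ_A$) are all correct and constitute the standard derivation of the parametrization \eqref{Min:par}.

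One caution about your closing aside: the suggestion that surjectivity of $\Id+A$ could be obtained ``via a Banach fixed-point argument applied to the firmly nonexpansive map $\Id-\bJ_A$'' does not work --- firmly nonexpansive maps are not contractions, and speaking of $\Id-\bJ_A$ as a map defined on all of $X$ already presupposes $\dom\bJ_A=X$, which is the very statement at issue, so the remark is circular. Since you explicitly do not rely on it and instead cite Minty's theorem (exactly as the paper does), this does not affect the validity of your proof.
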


\begin{fact}{\rm(See \cite[Theorem~3.1]{Zar71:1}.)}
\label{rec:PC}
Let $C$ be a nonempty closed convex subset of $X$.
Then $\overline{\ran}(\Id-\PR_C)=(\rec C)^\ominus$.
\end{fact}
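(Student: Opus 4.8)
The plan is to prove the two set inclusions separately, after first recording the basic description of $\ran(\Id-\PR_C)$ in terms of the normal cone operator $N_C$. From the variational characterization of the projection, $u = x-\PR_C x$ holds for some $x\in X$ if and only if $u\in N_C(\PR_C x)$; conversely, if $u\in N_C(c)$ with $c\in C$, then $\PR_C(c+u)=c$ and so $u=(c+u)-\PR_C(c+u)$. Hence $\ran(\Id-\PR_C)=\ran N_C$, and $u\in\ran N_C$ precisely when the supremum $\sup_{c\in C}\innp{u,c}$ is attained on $C$.

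For the inclusion $\overline{\ran}(\Id-\PR_C)\subseteq(\rec C)^\ominus$, I would take $u\in N_C(c)$ and any $r\in\rec C$. Since $c+\RP r\subseteq C$, inserting the points $c+tr$ with $t>0$ into the normal-cone inequality $\innp{u,c'-c}\le 0$ yields $t\innp{u,r}\le 0$, whence $\innp{u,r}\le 0$. Thus $\ran N_C\subseteq(\rec C)^\ominus$, and since the polar cone is closed, the closure is contained as well.

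The reverse inclusion $(\rec C)^\ominus\subseteq\overline{\ran}(\Id-\PR_C)$ is the crux. I would first reduce to the barrier cone $B:=\{u\in X : \sup_{c\in C}\innp{u,c}<+\infty\}$, using the standard convex-analysis identity $(\rec C)^\ominus=\overline{B}$ for nonempty closed convex $C$ (it follows from $B^\ominus=\rec C$ and the bipolar theorem). It then suffices to show $B\subseteq\overline{\ran N_C}$. Given $u\in B$ and $\epsilon>0$, set $c_\epsilon:=\PR_C(u/\epsilon)$, so that $c_\epsilon$ maximizes $c\mapsto\innp{u,c}-\tfrac{\epsilon}{2}\|c\|^2$ over $C$. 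The residual $u-\epsilon c_\epsilon=\epsilon\bk{u/\epsilon-\PR_C(u/\epsilon)}$ lies in the cone $N_C(c_\epsilon)$, hence in $\ran N_C$. Comparing the value at $c_\epsilon$ with that at a fixed $c_0\in C$ and using $\innp{u,c_\epsilon}\le M:=\sup_{c\in C}\innp{u,c}<+\infty$ gives the bound $\epsilon\|c_\epsilon\|^2\le 2(M-\innp{u,c_0})+\epsilon\|c_0\|^2$, so that $\epsilon\|c_\epsilon\|=O(\sqrt{\epsilon})\to 0$. Therefore $u=\lim_{\epsilon\downarrow 0}(u-\epsilon c_\epsilon)\in\overline{\ran N_C}$, completing the inclusion.

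The main obstacle is precisely this reverse inclusion: the set $\ran N_C=\{u : \sup_{c\in C}\innp{u,c}\text{ is attained}\}$ can be a proper subset of the barrier cone $B$, so one genuinely needs the regularization and limiting argument (equivalently, the subdifferential identity $\overline{\ran\,\partial\iota_C}=\overline{\dom\iota_C^*}$) rather than a pointwise identification. The coercivity estimate forcing $\epsilon c_\epsilon\to 0$ is the quantitative heart of the argument; the remaining steps are routine manipulations of the projection and normal-cone inequalities.
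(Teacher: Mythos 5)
Your proof is correct. Note that the paper does not actually prove this statement --- it is quoted as a Fact with a citation to Zarantonello's Theorem~3.1 --- so there is no internal argument to compare against; what you have supplied is a genuine, self-contained proof of the cited result. All the key steps check out: the identification $\ran(\Id-\PR_C)=\ran N_C$ via the variational characterization of the projection; the easy inclusion $\ran N_C\subseteq(\rec C)^\ominus$ by testing the normal-cone inequality along recession directions $c+tr$, $t>0$; the reduction of the hard inclusion to the barrier cone $B=\menge{u\in X}{\sup\innp{u,C}<+\infty}$ via the standard identities $B^\ominus=\rec C$ and $(\rec C)^\ominus=B^{\ominus\ominus}=\overline{B}$ (valid since $B$ is a convex cone containing $0$); and the Tikhonov-type regularization $c_\epsilon=\PR_C(u/\epsilon)$, where the comparison with a fixed $c_0\in C$ gives $\epsilon\norm{c_\epsilon}^2\le 2\bk{M-\innp{u,c_0}}+\epsilon\norm{c_0}^2$, hence $\norm{\epsilon c_\epsilon}=O(\sqrt{\epsilon})\to 0$ and $u=\lim_{\epsilon\downarrow 0}(u-\epsilon c_\epsilon)\in\overline{\ran N_C}$. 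You are also right to flag that the crux is that $\ran N_C$ (suprema of linear functionals that are \emph{attained} on $C$) can be strictly smaller than $B$, so a limiting argument is unavoidable; this is exactly why the closure appears in the statement. Two small remarks: the identity $(\rec C)^\ominus=\overline{B}$ you invoke is most often quoted in finite dimensions (e.g.\ Rockafellar), which suffices here since the paper's standing assumption is that $X$ is finite-dimensional; but both it and your regularization step in fact go through in an arbitrary real Hilbert space (the polarity $B^\ominus=\rec C$ follows from separation, and projections onto closed convex sets exist), so your argument recovers Zarantonello's result in the generality in which it is used elsewhere in the literature.
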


Let $T\colon X\to X$. 
Recall that $T$ is \emph{nonexpansive} if
$(\forall x\in X)(\forall y\in X)$
$\|Tx-Ty\|\leq \|x-y\|$,
and $T$ is \emph{firmly nonexpansive}
if 
\begin{equation}
(\forall x\in X)(\forall y\in X)\quad\|Tx-Ty\|^2 + \|(\Id-T)x-(\Id-T)y\|^2\leq \|x-y\|^2.
\end{equation}

\begin{fact}~{\rm(See, e.g., \cite[Theorem~2]{EckBer}.)}
\label{T:JA}
Let $\DD$ be a nonempty
subset of $ X$, let $\T:\DD\to  X$,
let $A\colon X\To X$, and suppose that
$A=\T^{-1}-\Id$. 
Then the following hold:
\begin{enumerate}
\item $\T=\bJ_{A}$. 
\item $A$ is monotone if and only if $\T$ is firmly nonexpansive.
\item $A$ is maximally monotone if and only if $\T$ is 
firmly nonexpansive and $\DD=X$. 
 \end{enumerate}
\end{fact}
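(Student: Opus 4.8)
The plan is to prove the three items in order, since (i) feeds directly into (ii) and (ii) feeds into (iii).

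For (i), I would work purely at the level of graphs and set-valued inverses. From the hypothesis $A=\T^{-1}-\Id$ one reads $\gra A=\menge{(u,x-u)}{x\in\DD,\ \T x=u}$, so that adding $\Id$ gives, for each $u$, $(\Id+A)(u)=u+A(u)=\T^{-1}(u)$; that is, $\Id+A=\T^{-1}$ as relations. Inverting both sides and using that $\T$ is single-valued, so that $(\T^{-1})^{-1}=\T$, yields $\bJ_A=(\Id+A)^{-1}=\T$. The only thing requiring care here is the bookkeeping of the set-valued inverse and of the domains (note $\dom\bJ_A=\ran(\Id+A)=\ran\T^{-1}=\dom\T=\DD$, consistent with $\bJ_A=\T$).

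For (ii), I would use (i) to parametrize $\gra A$ through $\T$: every pair in $\gra A$ has the form $(\T x,x-\T x)$ with $x\in\DD$, and conversely. Substituting two such pairs $(\T x,x-\T x)$ and $(\T y,y-\T y)$ into the monotonicity inequality $\innp{a-b,a^*-b^*}\ge 0$ gives exactly
\[
\innp{\T x-\T y,(x-y)-(\T x-\T y)}\ge 0,
\]
that is, $\innp{\T x-\T y,x-y}\ge\norm{\T x-\T y}^2$ for all $x,y\in\DD$. It then remains to record the elementary identity showing this inner-product inequality is equivalent to the sum-of-squares definition of firm nonexpansiveness stated above: setting $p=\T x-\T y$ and $d=x-y$, one has $(\Id-\T)x-(\Id-\T)y=d-p$ and $\norm{p}^2+\norm{d-p}^2\le\norm{d}^2\iff\norm{p}^2\le\innp{d,p}$. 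Since the parametrization is a bijection onto $\gra A$, the ``for all pairs'' in monotonicity matches the ``for all $x,y\in\DD$'' in firm nonexpansiveness, and the equivalence follows.

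For (iii), granting (ii), the operator $A$ is monotone precisely when $\T$ is firmly nonexpansive, so the remaining content is the equivalence between maximality of the monotone $A$ and $\DD=X$. Here I would invoke Minty's theorem: a monotone operator $A$ is maximally monotone if and only if $\Id+A$ is surjective, i.e.\ $\ran(\Id+A)=X$. Since $\Id+A=\T^{-1}$ from (i), we have $\ran(\Id+A)=\ran\T^{-1}=\dom\T=\DD$, so surjectivity of $\Id+A$ is exactly $\DD=X$. Combining the pieces: if $\T$ is firmly nonexpansive and $\DD=X$ then $A$ is monotone with $\ran(\Id+A)=X$, hence maximal; conversely, maximality forces both monotonicity (giving firm nonexpansiveness via (ii)) and $\ran(\Id+A)=X$ (giving $\DD=X$). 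The main obstacle is precisely this appeal to Minty's surjectivity characterization of maximality: it is the one genuinely nontrivial external ingredient, whereas everything else reduces to formal manipulation of graphs and a one-line expansion of a norm.
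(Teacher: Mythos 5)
Your proposal is correct. The paper offers no proof of this statement---it is quoted as a Fact and attributed to \cite[Theorem~2]{EckBer}---and your argument (the relation identity $\Id+A=\T^{-1}$, hence $\bJ_A=(\T^{-1})^{-1}=\T$, the parametrization $\gra A=\menge{(\T x,x-\T x)}{x\in \DD}$ to convert monotonicity into the firm-nonexpansiveness inequality, and Minty's surjectivity theorem applied to $\ran(\Id+A)=\dom\T=\DD$ for item (iii)) is essentially the standard proof underlying that citation, so it matches rather than departs from the paper's approach.
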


\section{Near convexity and near equality}
\label{S:eq:nc}

We now review and extend results on
near equality and near convexity.

\begin{defn}[near convexity]
{\rm(See Rockafellar and Wets's \cite[Theorem~12.41]{Rock98}.)}
\label{def:n:con}
Let $D$ be a subset of $X$.
Then $D$ is \emph{nearly convex} if there exists a convex subset 
$C $ of $X$ such that $C\subseteq D\subseteq \overline{C}$.
\end{defn}

\begin{fact}
 {\rm(See \cite[Theorem~12.41]{Rock98}.)}
 \label{D-R-nc}
Let $A:X\rras X$ be maximally monotone.
Then $\dom A$ and $\ran A$ 
are nearly convex.
\end{fact}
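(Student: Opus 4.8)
The plan is to reduce everything to the domain and then exhibit an explicit convex witness. Since $A^{-1}$ is again maximally monotone and $\ran A=\dom A^{-1}$ by \eqref{inv:r:d}, it suffices to prove that $\dom A$ is nearly convex; applying that conclusion to $A^{-1}$ then yields the statement for $\ran A$. For the domain I would set $K:=\overline{\dom A}$ and establish two things: (a) $K$ is convex, and (b) $\ri K\subseteq\dom A$. Granting both, the set $C:=\ri K$ is convex (a relative interior of a convex set), satisfies $C\subseteq\dom A\subseteq K$, and obeys $\overline{C}=\overline{K}=K$ because the closure of the relative interior of a nonempty convex set equals its closure. Thus $C\subseteq\dom A\subseteq\overline{C}$, which is exactly near convexity in the sense of Definition~\ref{def:n:con}.

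For step (a) I would exploit the resolvents $\bJ_{\mu A}$, $\mu>0$. Fix $x_0,x_1\in\dom A$, choose $u_i\in Ax_i$, let $t\in[0,1]$, and put $m:=(1-t)x_0+tx_1$. Writing $m_\mu:=\bJ_{\mu A}m\in\dom A$ and $v_\mu:=\mu^{-1}(m-m_\mu)$, the definition of the resolvent gives $v_\mu\in Am_\mu$, so $(m_\mu,v_\mu)\in\gra A$. Forming the $((1-t),t)$ convex combination of the monotonicity inequalities $\scal{m_\mu-x_i}{v_\mu-u_i}\ge 0$ and using $\scal{m_\mu-m}{v_\mu}=-\mu\norm{v_\mu}^2$, a short computation yields $\mu\norm{v_\mu}^2\le C+\mu\scal{v_\mu}{(1-t)u_0+tu_1}$ with $C:=(1-t)\scal{x_0-m}{u_0}+t\scal{x_1-m}{u_1}$ independent of $\mu$. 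Setting $a:=\sqrt{\mu}\,\norm{v_\mu}$ turns this into $a^2\le C+\sqrt{\mu}\,\norm{(1-t)u_0+tu_1}\,a$, which keeps $a$ bounded as $\mu\downarrow 0$; hence $\mu\norm{v_\mu}=\sqrt{\mu}\,a\to 0$, so $m_\mu\to m$ and $m\in K$. Therefore $\conv\dom A\subseteq K$, whence $K=\cconv\dom A$ is convex.

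For step (b) I would first reduce to the case $\aff K=X$ by passing to the closed affine hull of $\dom A$, so that $\ri K$ becomes the ordinary interior $\inte K$. For $x\in\inte K$ the resolvent points $x_\mu:=\bJ_{\mu A}x$ lie in $\dom A$ and converge to $x$, while $v_\mu:=\mu^{-1}(x-x_\mu)\in Ax_\mu$. Invoking local boundedness of the monotone operator $A$ at the interior point $x$ of $\conv\dom A$ bounds $\stb{v_\mu}$, so along a subsequence $v_\mu\to v$; closedness of $\gra A$ then gives $(x,v)\in\gra A$, i.e.\ $x\in\dom A$.

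The routine ingredients are the resolvent computation in (a), the convergence $\bJ_{\mu A}x\to x$ on $K$, and the closedness of $\gra A$. The main obstacle is step (b): the affine-hull reduction must be executed so that the restricted operator remains maximally monotone and its domain keeps the same relative interior, and the argument relies on Rockafellar's local boundedness theorem at interior points of $\conv\dom A$. Handling the \emph{relative} rather than absolute interior cleanly is the crux; the supporting fact is that, normalizing $v_\mu/\norm{v_\mu}$, any limit direction lies in the normal cone $N_K(x)=(\aff K-x)^\perp$, which is what legitimizes confining attention to the affine hull.
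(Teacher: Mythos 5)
There is no proof in the paper to compare against: the paper records this statement as a Fact with a citation to Rockafellar--Wets (Theorem~12.41), so what you have written is a self-contained reconstruction of the classical argument behind that citation. Your outline is the standard one and is essentially correct: pass from $\ran A$ to $\dom A$ via $A^{-1}$; show $K:=\overline{\dom A}$ is convex by a resolvent estimate; show $\ri K\subseteq \dom A$ by boundedness of the vectors $v_\mu$ plus closedness of $\gra A$. The computation in step (a) checks out. Two points need tightening, though. First, step (a) only shows that two-point convex combinations of points of $\dom A$ lie in $K$; to conclude that $K$ is convex (equivalently that $\conv\dom A\subseteq K$) you must additionally approximate arbitrary points of $K$ by points of $\dom A$ and use closedness of $K$ --- a one-line fix, but your implication is stated in the wrong order. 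Second, and more seriously, the affine-hull reduction in step (b) is the one step that genuinely owes the reader an argument: restricting a maximally monotone operator to $\aff \dom A$ while preserving maximality (so that resolvents and Rockafellar's local boundedness theorem apply to the restricted operator) is not automatic; classically it rests on the nontrivial identity $Ax+N_K(x)=Ax$ on $\dom A$, which itself uses maximality.

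However, your own closing observation dissolves this obstacle, and you should promote it from a ``supporting fact'' to the actual proof of step (b): no reduction and no local-boundedness theorem are needed. Since $x\in\ri K$ and $x_\mu:=\bJ_{\mu A}x\in\dom A$ both lie in $\aff\dom A$, the vectors $v_\mu:=\mu^{-1}(x-x_\mu)$ automatically lie in the parallel subspace $Y_0:=\aff\dom A-x$. If $\norm{v_{\mu_n}}\to\infty$ along some $\mu_n\downarrow 0$, take a cluster point $w$ of $v_{\mu_n}/\norm{v_{\mu_n}}$: dividing the monotonicity inequalities $\scal{x_{\mu_n}-z}{v_{\mu_n}-u}\ge 0$, for $(z,u)\in\gra A$, by $\norm{v_{\mu_n}}$ and using $x_{\mu_n}\to x$ gives $w\in N_K(x)$ (here convexity of $K$ from step (a) is used), while also $w\in Y_0$ and $\norm{w}=1$. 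But at a relative interior point $N_K(x)=Y_0^\perp$, so $w\in Y_0\cap Y_0^\perp=\stb{0}$, a contradiction. Hence $\stb{v_\mu}$ is bounded, a subsequence converges (finite-dimensionality, as in the paper's setting), and closedness of $\gra A$ yields $x\in\dom A$ exactly as you wrote. With these two repairs your sketch is a complete and correct proof of the cited Fact.
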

\begin{defn}[near equality]
{\rm (See \cite[Definition~2.3]{BMW}.)}
\label{def:n:eq}
Let $C$ and $D$ be subsets of $X$. 
We say that $C$ and $D$ 
are \emph{nearly equal} if
\begin{equation}
C \simeq  D\;\; :\negthinspace\negthinspace\iff \;\; 
\overline{C}=\overline{D} \text{~~and~~}\ri C=\ri D.
\end{equation} 
\end{defn}
\begin{fact}{\rm(See \cite[Lemma~2.7]{BMW}.)}\label{lem:need}
Let $D$ be a nonempty nearly convex subset of $X$, say 
$C\subseteq D\subseteq \overline{C}$, where $C$
is a convex subset of $X$. Then
\begin{equation}
D\simeq  \overline{D} \simeq  \ri D \simeq  \conv D
\simeq  \ri \conv D \simeq  C.
\end{equation}
In particular, $\overline{D} $ and $\ri D$ are convex
and nonempty.
\end{fact}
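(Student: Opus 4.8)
The plan is to prove Fact~\ref{lem:need} by exploiting the sandwich $C\subseteq D\subseteq\overline{C}$ together with the definition of near equality, reducing every claimed near equality to two facts about the convex set $C$: that passing from $C$ to any set trapped between $C$ and $\overline{C}$ changes neither the closure nor the relative interior. The central technical input is the classical result from convex analysis that for a \emph{nonempty convex} set $C$ one has $\overline{C}=\overline{\ri C}$ and $\ri C=\ri\overline{C}$, and more generally that $\ri$ and closure depend only on the ``fatness'' of $C$ in its affine hull; see, e.g., Rockafellar, \emph{Convex Analysis}, Theorem~6.3. I would first record that $C$ is nonempty (since $D$ is nonempty and $D\subseteq\overline{C}$ forces $C\neq\varnothing$, as an empty $C$ has empty closure), so that these convexity facts apply.

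First I would establish the two monotonicity principles for the operators $\overline{(\cdot)}$ and $\ri$. Taking closures in $C\subseteq D\subseteq\overline{C}$ gives $\overline{C}\subseteq\overline{D}\subseteq\overline{\overline{C}}=\overline{C}$, hence $\overline{D}=\overline{C}$. For relative interiors, the analogous squeeze is less automatic because $\ri$ is not monotone for arbitrary sets; here I would use that $\aff D=\aff C$ (again from the sandwich, since $\aff\overline{C}=\aff C$ for convex $C$), so all three sets share the same affine hull, and then invoke that for convex $C$ every set $D$ with $C\subseteq D\subseteq\overline{C}$ satisfies $\ri C\subseteq\ri D$ and $\ri D\subseteq\ri\overline{C}=\ri C$, whence $\ri D=\ri C$. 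Combining, $\overline{D}=\overline{C}$ and $\ri D=\ri C$, which is exactly $D\simeq C$.

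With $D\simeq C$ in hand, the remaining near equalities follow by verifying that each of $\overline{D}$, $\ri D$, $\conv D$, and $\ri\conv D$ also has closure $\overline{C}$ and relative interior $\ri C$. For $\overline{D}$ and $\ri D$ this is immediate from the convex-set identities $\overline{\overline{C}}=\overline{C}$, $\ri\overline{C}=\ri C$, $\overline{\ri C}=\overline{C}$, and $\ri\ri C=\ri C$. For $\conv D$, I would note the inclusions $C=\conv C\subseteq\conv D\subseteq\conv\overline{C}\subseteq\overline{C}$ (using that $C$ is convex and that $\conv\overline{C}=\overline{C}$ since $\overline{C}$ is convex), so $\conv D$ is itself sandwiched between $C$ and $\overline{C}$, and the first part applies verbatim to give $\conv D\simeq C$; the same sandwich then yields $\ri\conv D\simeq C$. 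By transitivity of $\simeq$ (symmetric and transitive because it is defined by equality of the two set-valued maps $\overline{(\cdot)}$ and $\ri$), all the listed sets are nearly equal to one another and to $C$.

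The main obstacle I anticipate is purely the non-monotonicity of the relative-interior operator on general (non-convex) sets: one cannot simply write $\ri C\subseteq\ri D\subseteq\ri\overline{C}$ without justification, since $\ri$ can behave pathologically when the intermediate set $D$ is not convex. The clean way around this is to prove directly that if $C\subseteq D\subseteq\overline{C}$ with $C$ convex then $\ri D=\ri C$, using that any relative-interior point $x$ of $C$ satisfies $\BB{x}{r}\cap\overline{\aff}\,C\subseteq C\subseteq D$ for some $r>0$ (so $x\in\ri D$), while conversely a point of $D$ outside $\ri C$ lies in $\overline{C}\setminus\ri C=\overline{\ri C}\setminus\ri C$, the relative boundary, where no such ball fits inside $\overline{C}\supseteq D$ within the common affine hull, so it cannot be in $\ri D$. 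Establishing this equality carefully, and the parallel statement $\aff D=\aff C$ that underlies it, is where essentially all the real content lies; the nonemptiness and convexity conclusions in the final sentence are then free, since $\overline{D}=\overline{C}$ and $\ri D=\ri C$ inherit convexity and nonemptiness from $\overline{C}$ and $\ri C$.
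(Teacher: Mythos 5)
This statement appears in the paper only as an imported Fact, with the proof deferred to the cited reference \cite{BMW}; the paper itself contains no argument to compare yours against, so I can only judge your proposal on its merits — and it is correct, and follows the natural route that the cited source also takes. The closure identity $\overline{D}=\overline{C}$ is indeed immediate from the sandwich $C\subseteq D\subseteq\overline{C}$ and monotonicity of closure, and you correctly isolated the one genuine subtlety: $\ri$ is not monotone on arbitrary sets, so $\ri D=\ri C$ cannot be obtained by a naive squeeze. Your fix is the right one: first show $\aff D=\aff C$ (using $\aff\overline{C}=\aff C$, valid in the finite-dimensional setting where affine hulls are closed), then get $\ri C\subseteq\ri D$ directly from the definition (a ball witnessing $x\in\ri C$ inside the common affine hull lands in $C\subseteq D$), and $\ri D\subseteq\ri\overline{C}=\ri C$ from the same definitional argument plus Rockafellar's Theorem~6.3; nonemptiness of $\ri C$ (Theorem~6.2) and $\overline{\ri C}=\overline{C}$ then supply everything needed for the remaining chain. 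The observation that $\conv D$ (and hence $\ri\conv D$) is itself sandwiched between $C=\conv C$ and $\overline{C}=\conv\overline{C}$, so that the first part applies verbatim, together with the fact that $\simeq$ is an equivalence relation, cleanly finishes the list, and the final convexity/nonemptiness assertions are free since $\overline{D}=\overline{C}$ and $\ri D=\ri C$.
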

\begin{fact}{ \rm(See \cite[Proposition~2.12(i)\&(ii)]{BMW}.)} \label{ri:part}
Let $C$ and $D$
be nearly convex subsets of X. Then 
\begin{equation}
C\simeq D \iff \overline{C}=\overline{D}.
\end{equation}
\end{fact}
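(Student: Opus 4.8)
The plan is to prove the equivalence $C\simeq D\iff\overline{C}=\overline{D}$ for nearly convex sets $C$ and $D$. The forward implication is immediate from the definition of near equality (Definition~\ref{def:n:eq}), since $C\simeq D$ requires both $\overline{C}=\overline{D}$ and $\ri C=\ri D$; in particular it forces $\overline{C}=\overline{D}$. So all the work is in the reverse implication: assuming $\overline{C}=\overline{D}$, I must deduce $\ri C=\ri D$, which then combines with the hypothesis to give $C\simeq D$.

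For the reverse direction, the key tool is Fact~\ref{lem:need}, which tells us that a nonempty nearly convex set $D$ satisfies $\overline{D}\simeq\ri D$ and, crucially, that $\overline{D}$ and $\ri D$ are convex. First I would dispose of the trivial case: if $\overline{C}=\overline{D}=\emp$, then $C=D=\emp$ and there is nothing to prove. So assume $C$ and $D$ are nonempty nearly convex sets with $\overline{C}=\overline{D}=:\overline{S}$, a nonempty closed convex set. Then $\ri\,\overline{C}=\ri\,\overline{D}$ trivially, since these relative interiors are computed from the identical set $\overline{S}$. The point is now to pass from the relative interior of the closure back to the relative interior of the set itself.

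The decisive step is to invoke the part of Fact~\ref{lem:need} asserting $\ri D\simeq\overline{D}$, and likewise $\ri C\simeq\overline{C}$. Applying the definition of near equality to these, we get $\ri(\ri D)=\ri(\overline{D})$ and $\ri(\ri C)=\ri(\overline{C})$. Since $\ri C$ and $\ri D$ are convex by Fact~\ref{lem:need}, a standard convexity fact gives $\ri(\ri C)=\ri C$ and $\ri(\ri D)=\ri D$; hence $\ri C=\ri(\overline{C})$ and $\ri D=\ri(\overline{D})$. Because $\overline{C}=\overline{D}$ by hypothesis, the right-hand sides coincide, whence $\ri C=\ri D$. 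Combined with $\overline{C}=\overline{D}$, this is exactly $C\simeq D$.

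The main obstacle I anticipate is bookkeeping around the relative interior of a convex set versus its closure, specifically the identity $\ri(\ri S)=\ri S$ for convex $S$ and the interchangeability of relative interiors under $\overline{C}=\overline{D}$. These are routine finite-dimensional convex-analysis facts, but one must be careful that each set to which $\ri$ is applied is genuinely convex, which is guaranteed only after $\ri C$, $\ri D$ are known to be convex via Fact~\ref{lem:need}. Provided that hypothesis is in force, the argument reduces to a short chain of equalities rather than any substantive estimate, so I expect the proof to be quite brief.
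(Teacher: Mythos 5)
Your proof is correct: the forward implication is definitional, and your reverse direction correctly reduces to Fact~\ref{lem:need}, via $\ri C=\ri(\ri C)=\ri\,\overline{C}=\ri\,\overline{D}=\ri(\ri D)=\ri D$ (with the empty case disposed of separately, as needed since Fact~\ref{lem:need} assumes nonemptiness). The paper itself gives no proof — it imports this as a Fact from \cite[Proposition~2.12(i)\&(ii)]{BMW} — and the argument in that reference is essentially yours (near equality of each set with its closure, plus transitivity of $\simeq$), the only cosmetic difference being that $\ri(\ri C)=\ri C$ can be read off directly from the first component of $C\simeq\ri C$ in Fact~\ref{lem:need} rather than invoked as a separate convexity fact, which is exactly how the paper argues in Lemma~\ref{sh:pr}.
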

\begin{fact}{\rm (See \cite[Lemma~2.13]{BMW}.)}\label{sum:nc}
Let $(C_i)_{i\in I}$ be a finite family of nearly 
convex subsets of $X$, and let 
$(\lam_i)_{i\in I}$ be a finite family of real numbers. 
Then $\sum_{i\in I} \lam_i C_i$
is nearly convex and $\ri(\sum_{i\in I}\lam_i C_i)
=\sum_{i\in I}\lam_i \ri C_i$.
\end{fact}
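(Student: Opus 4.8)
The plan is to reduce the statement to the classical convex-analysis calculus of relative interiors by replacing each nearly convex set with a convex witness. By Fact~\ref{lem:need}, for each $i\in I$ the relative interior $D_i:=\ri C_i$ is a nonempty convex set satisfying $D_i\subseteq C_i\subseteq\overline{D_i}$ and $\ri D_i=\ri C_i$; moreover, being the relative interior of a convex set, $D_i$ is relatively open, so $\ri D_i=D_i$. I would use these $D_i$ to exhibit a single convex set sandwiched between $\sum_{i\in I}\lam_i C_i$ and its closure, and then read off both conclusions from this sandwich.

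First I would show that the convex set $E:=\sum_{i\in I}\lam_i D_i$ witnesses the near convexity of $\sum_{i\in I}\lam_i C_i$, i.e.\ that $E\subseteq\sum_{i\in I}\lam_i C_i\subseteq\overline{E}$. The left inclusion is immediate: scaling preserves inclusions regardless of the sign of $\lam_i$ (for $\lam_i\neq 0$ it is a homeomorphism of $X$, and for $\lam_i=0$ both $\lam_i D_i$ and $\lam_i C_i$ reduce to $\{0\}$ since the sets are nonempty), so $D_i\subseteq C_i$ gives $\lam_i D_i\subseteq\lam_i C_i$, and summing preserves the inclusion. For the right inclusion, $C_i\subseteq\overline{D_i}$ yields $\lam_i C_i\subseteq\lam_i\overline{D_i}=\overline{\lam_i D_i}$, and then the elementary inclusion $\sum_{i\in I}\overline{\lam_i D_i}\subseteq\overline{\sum_{i\in I}\lam_i D_i}$ --- proved by selecting a convergent approximating sequence in each summand --- gives $\sum_{i\in I}\lam_i C_i\subseteq\overline{E}$. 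This establishes that $\sum_{i\in I}\lam_i C_i$ is nearly convex, with convex witness $E$.

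Next I would compute the relative interior. Since $E$ is a convex witness, Fact~\ref{lem:need} gives $\sum_{i\in I}\lam_i C_i\simeq E$, hence $\ri\big(\sum_{i\in I}\lam_i C_i\big)=\ri E$. It remains to evaluate $\ri E=\ri\big(\sum_{i\in I}\lam_i D_i\big)$ for the convex sets $D_i$. Here I would invoke the classical finite-dimensional identities $\ri(E_1+E_2)=\ri E_1+\ri E_2$ and $\ri(\lam E)=\lam\,\ri E$ for convex sets (see, e.g., \cite{Rock70}), which combine by induction on $\card I$ to $\ri\big(\sum_{i\in I}\lam_i D_i\big)=\sum_{i\in I}\lam_i\,\ri D_i$. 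Using $\ri D_i=D_i=\ri C_i$, this equals $\sum_{i\in I}\lam_i\,\ri C_i$, completing the proof.

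The main obstacle is bookkeeping rather than depth: one must treat negative and zero scalars correctly, since scaling is a homeomorphism only for $\lam_i\neq 0$ and the degenerate term $0\cdot C_i$ must be identified with $\{0\}$ using nonemptiness; and one must not overlook that the relative-interior-of-sum identity is genuinely a finite-dimensional phenomenon. It is precisely here that the standing assumption $\dim X<\infty$ enters, and the inductive step relies on each intermediate convex set being nonempty, which Fact~\ref{lem:need} guarantees.
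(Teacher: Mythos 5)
Your proof is correct. Note that the paper itself does not prove this statement --- it is imported as a Fact with a citation to \cite[Lemma~2.13]{BMW} --- and your argument (replace each $C_i$ by the convex witness $\ri C_i$ via Fact~\ref{lem:need}, sandwich $\sum_{i\in I}\lam_i C_i$ between $E=\sum_{i\in I}\lam_i\ri C_i$ and $\overline{E}$, then apply Rockafellar's relative-interior calculus for sums and scalar multiples of convex sets) is precisely the standard route taken in that reference, so there is nothing substantive to compare. The only loose end is the degenerate case where some $C_i$ is empty, which Fact~\ref{lem:need} does not cover, but then both sides of the identity are empty and the claim is trivial.
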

\begin{fact}
{\rm(See \cite[Theorem~2.14]{BMW}.)}
\label{prox:part} 
Let $(C_i)_{i\in I}$ be a finite family of nearly convex subsets 
of $X$, and let $(D_i)_{i\in I}$ be a family of subsets 
of $X$ such that $C_i\simeq D_i$ for every $i\in I$.
Then $\sum_{i\in I}C_i\simeq \sum_{i\in I}D_i$.
\end{fact}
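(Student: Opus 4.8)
The plan is to reduce the claim to the cited structural facts about nearly convex sets, Fact~\ref{lem:need}, Fact~\ref{sum:nc}, and Fact~\ref{ri:part}. The only genuinely new point I must establish is that each $D_i$ is itself nearly convex: Fact~\ref{sum:nc} applies only to nearly convex summands, whereas a priori the $D_i$ are assumed only to be nearly equal to the $C_i$. Before anything else I would dispose of the degenerate case: if some $C_{i_0}=\emp$, then $\overline{D_{i_0}}=\overline{C_{i_0}}=\emp$ forces $D_{i_0}=\emp$, both Minkowski sums collapse to $\emp$, and the near equality is trivial. Hence I may assume every $C_i$ is nonempty, and then $\overline{D_i}=\overline{C_i}\neq\emp$ shows every $D_i$ is nonempty too.

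The crux is a sandwich argument. Fix $i\in I$. Since $C_i$ is nonempty and nearly convex, Fact~\ref{lem:need} guarantees that $\ri C_i$ is convex and nonempty and that $\overline{\ri C_i}=\overline{C_i}$. The hypothesis $C_i\simeq D_i$ unpacks, by Definition~\ref{def:n:eq}, to $\ri D_i=\ri C_i$ and $\overline{D_i}=\overline{C_i}$. Combining these,
\[
\ri C_i=\ri D_i\subseteq D_i\subseteq\overline{D_i}=\overline{C_i}=\overline{\ri C_i},
\]
which exhibits $D_i$ as squeezed between the convex set $\ri C_i$ and its closure; by Definition~\ref{def:n:con}, $D_i$ is nearly convex.

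With all summands on both sides nearly convex, the conclusion assembles mechanically. By Fact~\ref{sum:nc}, both $\sum_{i\in I}C_i$ and $\sum_{i\in I}D_i$ are nearly convex, and the relative interior distributes over the sum, so that
\[
\ri\Big(\sum_{i\in I}C_i\Big)=\sum_{i\in I}\ri C_i=\sum_{i\in I}\ri D_i=\ri\Big(\sum_{i\in I}D_i\Big),
\]
the middle equality using $\ri C_i=\ri D_i$ for each $i$. Taking closures and invoking $\overline{E}=\overline{\ri E}$ for nonempty nearly convex $E$ (Fact~\ref{lem:need}) yields $\overline{\sum_{i\in I}C_i}=\overline{\sum_{i\in I}D_i}$; Fact~\ref{ri:part}, applied to these two nearly convex sums, then upgrades equal closures to the desired near equality $\sum_{i\in I}C_i\simeq\sum_{i\in I}D_i$. (Equivalently, one has produced equal relative interiors and equal closures, which is Definition~\ref{def:n:eq} outright.) I expect the sandwich step to be the only real obstacle; everything after it is routine bookkeeping with the cited facts.
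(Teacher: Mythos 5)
Your proof is correct. Note that the paper itself gives no argument for this statement --- it is imported wholesale from \cite[Theorem~2.14]{BMW} --- so there is no in-paper proof to compare against; what you have produced is a self-contained derivation from the other quoted facts, and it is sound. The one step you correctly single out as non-routine, namely that each $D_i$ is itself nearly convex via the sandwich
\begin{equation*}
\ri C_i=\ri D_i\subseteq D_i\subseteq\overline{D_i}=\overline{C_i}=\overline{\ri C_i},
\end{equation*}
is exactly the device the paper uses in its proof of Corollary~\ref{eq:int}, the intersection analogue of this fact, where the identical chain of inclusions appears. From there your bookkeeping is the natural one: Fact~\ref{sum:nc} gives $\ri\big(\sum_{i\in I}C_i\big)=\sum_{i\in I}\ri C_i=\sum_{i\in I}\ri D_i=\ri\big(\sum_{i\in I}D_i\big)$, Fact~\ref{lem:need} upgrades equal relative interiors to equal closures, and Fact~\ref{ri:part} (or Definition~\ref{def:n:eq} directly) closes the argument. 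Your separate treatment of the empty case is also genuinely needed, since Fact~\ref{lem:need} is stated only for nonempty sets; with it, the proof is complete.
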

\begin{fact}{\rm (See \cite[Theorem~6.5]{Rock70}.)}
\label{f:3}
Let $(C_i)_{i\in I}$ be a finite
 family of convex subsets of $X$. 
Suppose that $\cap_{i\in I}\ri C_i\neq \fady$. Then
$\overline{\cap_{i\in I} C_i}=\cap_{i\in I}\overline{ C_i}$
and 
$\ri \cap_{i\in I} C_i=\cap_{i\in I} \ri C_i$.
\end{fact}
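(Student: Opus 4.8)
The plan is to reduce everything to two classical facts about a single convex set $C$ in finite dimensions: the \emph{line segment principle} (if $x\in\ri C$ and $y\in\overline{C}$, then $(1-\lambda)x+\lambda y\in\ri C$ for every $\lambda\in[0,1[$) and the \emph{prolongation principle} ($z\in\ri C$ if and only if for each $x\in C$ there exists $\mu>1$ with $(1-\mu)x+\mu z\in C$); both are standard, e.g.\ Rockafellar \cite[Theorems~6.1 and~6.4]{Rock70}. Throughout I would fix a reference point $a\in\bigcap_{i\in I}\ri C_i$, which is available precisely because of the hypothesis, and write $C:=\bigcap_{i\in I}C_i$.

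For the closure identity, the inclusion $\overline{C}\subseteq\bigcap_{i\in I}\overline{C_i}$ is immediate from monotonicity of the closure operator. For the reverse inclusion I would take $y\in\bigcap_{i\in I}\overline{C_i}$ and apply the line segment principle in each $C_i$ to the pair $a\in\ri C_i$ and $y\in\overline{C_i}$: this places the half-open segment $[a,y[$ inside $\ri C_i$ for every $i$, hence inside $\bigcap_{i\in I}\ri C_i\subseteq C$. Letting the segment parameter tend to $1$ exhibits $y$ as a limit of points of $C$, so $y\in\overline{C}$. This settles the closure equality, and note that this half uses the hypothesis only through the single point $a$.

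For $\ri C=\bigcap_{i\in I}\ri C_i$ I would prove the two inclusions separately. To show $\bigcap_{i\in I}\ri C_i\subseteq\ri C$, fix $z\in\bigcap_{i\in I}\ri C_i$ and, to verify $z\in\ri C$ via the prolongation principle, let $x\in C$ be arbitrary. For each $i$ the prolongation principle in $C_i$ yields $\mu_i>1$ with $(1-\mu_i)x+\mu_i z\in C_i$; setting $\mu:=\min_{i\in I}\mu_i>1$ (this is the one place where the \emph{finiteness} of $I$ is essential) and using convexity of each $C_i$ to slide the point $(1-\mu)x+\mu z$ back along the segment from $x$ to $(1-\mu_i)x+\mu_i z$, one obtains $(1-\mu)x+\mu z\in C_i$ for every $i$, hence in $C$. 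By the prolongation principle, $z\in\ri C$.

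The reverse inclusion $\ri C\subseteq\bigcap_{i\in I}\ri C_i$ is the delicate step, since $\ri(A\cap B)\subseteq\ri A\cap\ri B$ is false without a constraint qualification; the argument must route through the reference point $a$. Given $z\in\ri C$ and a fixed index $i$, I would apply the prolongation principle in $C$ to the segment emanating from $a\in C$ through $z$, obtaining $\mu>1$ with $w:=(1-\mu)a+\mu z\in C\subseteq C_i$. A short computation rewrites $z=(1-\tfrac1\mu)a+\tfrac1\mu w$ as a point strictly between $a$ and $w$ with segment parameter $\tfrac1\mu\in{]0,1[}$; since $a\in\ri C_i$ while $w\in C_i\subseteq\overline{C_i}$, the line segment principle gives $z\in\ri C_i$. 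As $i$ was arbitrary, $z\in\bigcap_{i\in I}\ri C_i$. The main obstacle throughout is precisely this last inclusion: it is where the hypothesis $\bigcap_{i\in I}\ri C_i\neq\fady$ does its real work, and the temptation to argue index-by-index without the common interior point $a$ must be resisted.
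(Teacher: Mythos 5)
Your proof is correct. Note that the paper itself offers no proof of this statement---it is imported as a Fact with a citation to \cite[Theorem~6.5]{Rock70}---so the relevant comparison is with Rockafellar's classical argument. Two of your three steps coincide with his: the closure identity via the line-segment principle through the common point $a$ (and you are right that this half needs neither finiteness of $I$ nor anything beyond the single point $a$), and the inclusion $\bigcap_{i\in I}\ri C_i\subseteq\ri\bigl(\bigcap_{i\in I}C_i\bigr)$ via the prolongation principle in each $C_i$ followed by taking the minimum of the finitely many $\mu_i$. Where you genuinely depart is the remaining inclusion $\ri\bigl(\bigcap_{i\in I}C_i\bigr)\subseteq\bigcap_{i\in I}\ri C_i$: Rockafellar obtains it indirectly, noting that the sandwich $\bigcap_{i}\overline{C_i}\subseteq\overline{\bigcap_{i}\ri C_i}\subseteq\overline{\bigcap_{i}C_i}\subseteq\bigcap_{i}\overline{C_i}$ forces $\bigcap_{i}\ri C_i$ and $\bigcap_{i}C_i$ to have the same closure, and then invoking the principle that two convex sets with the same closure have the same relative interior (his Corollary~6.3.1), which yields $\ri\bigl(\bigcap_i C_i\bigr)=\ri\bigl(\bigcap_i\ri C_i\bigr)\subseteq\bigcap_i\ri C_i$. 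You instead argue directly: prolong the segment from $a$ through $z$ inside $C$ to a point $w\in C\subseteq C_i$, write $z=(1-\tfrac1\mu)a+\tfrac1\mu w$, and apply the line-segment principle in $C_i$ between $a\in\ri C_i$ and $w\in\overline{C_i}$. Your route is self-contained modulo Theorems~6.1 and~6.4 and avoids Corollary~6.3.1 entirely; Rockafellar's sandwich has the advantage of delivering the closure identity and the key equality of closures in a single stroke, which is why his proof is shorter on the page. Both arguments correctly isolate where the hypothesis $\bigcap_{i\in I}\ri C_i\neq\fady$ and the finiteness of $I$ are actually used.
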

Most of the following results are known. For different
proofs 
see also \cite[Thorem~2.1]{Bot2008} and 
the forthcoming \cite{Sarah} and \cite{MMW}. 

\begin{lem}\label{sh:pr}
Let $C$ and $\D$
be nearly convex subsets of $X$ such that
 $\ri C\cap \ri \D\neq \fady$. 
 Then the following hold:
 \begin{enumerate}
 \item \label{sh:pr:1}
 $C\cap D$ is nearly convex.
\item\label{sh:pr:4}
 ${C\cap \D}\simeq{ \ri C\cap \ri \D}$.
 \item\label{sh:pr:2}
 $\ri(C\cap D)=\ri C\cap \ri D$.
 \item\label{sh:pr:3}
 $\overline{C\cap D}=\overline{C}\cap \overline{D}$.
   \end{enumerate}
\end{lem}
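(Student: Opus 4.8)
The plan is to reduce everything to the classical intersection result for genuinely convex sets, Fact~\ref{f:3}, by passing from $C$ and $D$ to their relative interiors. First I would record that, since $\ri C\cap\ri D\neq\fady$, both $\ri C$ and $\ri D$ are nonempty, and as $\ri C\subseteq C$ and $\ri D\subseteq D$ the sets $C$ and $D$ are themselves nonempty; hence Fact~\ref{lem:need} applies to each. It guarantees that $\ri C$ and $\ri D$ are \emph{convex} and nonempty, and that $C\simeq\ri C$ and $D\simeq\ri D$, i.e.\ $\overline{C}=\overline{\ri C}$, $\ri C=\ri(\ri C)$, and likewise for $D$. In particular $\ri(\ri C)\cap\ri(\ri D)=\ri C\cap\ri D\neq\fady$, so the hypothesis of Fact~\ref{f:3} is met for the finite convex family $\{\ri C,\ri D\}$. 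Applying that fact yields the two identities
\begin{equation*}
\overline{\ri C\cap\ri D}=\overline{\ri C}\cap\overline{\ri D}=\overline{C}\cap\overline{D}
\qquad\text{and}\qquad
\ri(\ri C\cap\ri D)=\ri(\ri C)\cap\ri(\ri D)=\ri C\cap\ri D,
\end{equation*}
where the outer equalities invoke $C\simeq\ri C$ and $D\simeq\ri D$.

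The second identity says that $\ri C\cap\ri D$ is relatively open and, being an intersection of two convex sets, convex. I would then deploy the sandwich
\begin{equation*}
\ri C\cap\ri D\;\subseteq\;C\cap D\;\subseteq\;\overline{C}\cap\overline{D}=\overline{\ri C\cap\ri D},
\end{equation*}
whose last equality is the first identity above. This exhibits $C\cap D$ as trapped between the convex set $\ri C\cap\ri D$ and its closure, which is precisely the definition of near convexity; this proves~\ref{sh:pr:1}. Moreover, Fact~\ref{lem:need}, applied to the nearly convex set $C\cap D$ with witnessing convex set $\ri C\cap\ri D$, gives $C\cap D\simeq\ri C\cap\ri D$, which is~\ref{sh:pr:4}.

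Finally, unwinding the near equality $C\cap D\simeq\ri C\cap\ri D$ via Definition~\ref{def:n:eq} yields $\overline{C\cap D}=\overline{\ri C\cap\ri D}$ and $\ri(C\cap D)=\ri(\ri C\cap\ri D)$; substituting the two identities from the first paragraph then gives $\overline{C\cap D}=\overline{C}\cap\overline{D}$ and $\ri(C\cap D)=\ri C\cap\ri D$, which are~\ref{sh:pr:3} and~\ref{sh:pr:2}. The only genuinely delicate point is the legitimacy of replacing $C,D$ by $\ri C,\ri D$ before invoking Fact~\ref{f:3}: this hinges entirely on Fact~\ref{lem:need}, which supplies both the convexity of the relative interiors and the near-equality identities $\overline{\ri C}=\overline{C}$, $\ri(\ri C)=\ri C$ (and their analogues for $D$) that let the convex-case conclusions be transported back to $C\cap D$. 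Everything else is set-theoretic bookkeeping.
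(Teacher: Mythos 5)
Your proof is correct and follows essentially the same route as the paper's: both reduce to Fact~\ref{f:3} applied to the convex sets $\ri C$ and $\ri D$, use the sandwich $\ri C\cap\ri D\subseteq C\cap D\subseteq\overline{C}\cap\overline{D}=\overline{\ri C\cap\ri D}$ to establish near convexity of $C\cap D$, and then apply Fact~\ref{lem:need} to $C\cap D$ with witness $\ri C\cap\ri D$ to obtain the near equality and, from it, the relative-interior and closure identities. Your explicit check that $C$ and $D$ are nonempty before invoking Fact~\ref{lem:need} is a small point of care that the paper leaves implicit.
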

\begin{proof}
\ref{sh:pr:1}:
Since $C$ and $\D$
are nearly convex, by Fact~\ref{lem:need},
$\ri C$ and $\ri \D$
are convex. Consequently, 
\begin{equation}\label{r:r:con}
\ri C\cap \ri \D \quad\text{is convex},
\end{equation}
 and clearly 
\begin{equation}\label{int:e:n}
\ri C\cap \ri \D\subseteq C\cap \D. 
\end{equation}
By Fact~\ref{lem:need} we have
$\ri C\simeq C$ and  $\ri D\simeq D$.
Hence, $\ri(\ri C)=\ri C$ 
and $\ri(\ri \D)=\ri \D$.
Therefore, 
\begin{equation}\label{int:e:1}
\ri(\ri C)\cap \ri(\ri \D)=\ri C\cap \ri \D\neq\fady.
\end{equation}
Using \eqref{int:e:1} and Fact~\ref{f:3} 
applied to the convex sets $\ri C$ and $\ri \D$
yield $\ri (C\cap D)=\ri C\cap \ri D$; hence
\begin{equation}\label{int:e:2}
\overline{\ri C\cap \ri \D}=\overline{\ri C}\cap\overline{ \ri \D}.
\end{equation}
Since $\ri C\simeq C$ and  $\ri D\simeq D$
by Fact~\ref{lem:need}, we have  
$\overline{\ri C}=\overline{C}$ 
and $\overline{\ri D}=\overline{D}$.  
Combining with \eqref{int:e:n}  and \eqref{int:e:2},
we obtain
\begin{equation}
\ri C\cap \ri \D\subseteq C\cap \D
\subseteq \overline{C}\cap \overline{\D}
= \overline{\ri C}\cap \overline{\ri \D}
=\overline{\ri C\cap \ri \D},
\end{equation}
which in turn yields \ref{sh:pr:1}
in view of \eqref{r:r:con}. 
\ref{sh:pr:4}:
Use \ref{sh:pr:1} and  Fact~\ref{lem:need} applied
to the convex set $\ri C\cap \ri D$
and the nearly convex set $C\cap D$.
\ref{sh:pr:2}:
Using \ref{sh:pr:4}, Fact~\ref{f:3}
applied to the convex sets 
$\ri C$ and $\ri D$
and \eqref{int:e:1} we have 
\begin{equation}
\ri(C\cap D)=\ri (\ri C\cap \ri D)=\ri (\ri C)\cap \ri (\ri D)=\ri C\cap \ri D,
\end{equation}
as required.
\ref{sh:pr:3}:
Since $C$ and $\D$ are nearly convex,
 it follows from Fact~\ref{lem:need}
that $ \overline{\ri C}= \overline{ C}$
and $\overline{ \ri \D}= \overline{ D}$.
Combining with \ref{sh:pr:4} and
\eqref{int:e:2} we have
\begin{equation}\label{intersect:e}
\overline{C\cap \D}=\overline{\ri C\cap \ri \D}
=\overline{\ri C}\cap\overline{ \ri \D}=
\overline{C}\cap \overline{\D},
\end{equation}
as claimed.
\end{proof}
\begin{cor}\label{eq:int}
Let $C_1$ and $C_2$ 
be nearly convex subsets of $X$,
 and let $\D_1$ and $\D_2$ be subsets of $X$
such that $C_1\simeq \D_1$ 
and $C_2\simeq \D_2$ .
Suppose that $\ri C_1\cap \ri C_2\neq \fady $. 
Then 
 \begin{equation}
 C_1\cap C_2\simeq \D_1\cap \D_2.
 \end{equation}
 \end{cor}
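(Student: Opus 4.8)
The plan is to reduce everything to Lemma~\ref{sh:pr}, which already handles the intersection of two nearly convex sets whose relative interiors meet. The given data are $C_1\simeq\D_1$ and $C_2\simeq\D_2$, with $C_1,C_2$ nearly convex and $\ri C_1\cap\ri C_2\neq\fady$. The first thing I would establish is that $\D_1$ and $\D_2$ are themselves nearly convex, since Lemma~\ref{sh:pr} is stated only for nearly convex sets; this is where the only real content lies. By Fact~\ref{lem:need} we have $C_1\simeq\ri C_1$ with $\ri C_1$ convex, so from $C_1\simeq\D_1$ I obtain $\ri\D_1=\ri C_1$ (convex) and $\overline{\D_1}=\overline{C_1}=\overline{\ri C_1}=\overline{\ri\D_1}$. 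Hence $\ri\D_1\subseteq\D_1\subseteq\overline{\D_1}=\overline{\ri\D_1}$, which exhibits $\D_1$ as nearly convex with witnessing convex set $\ri\D_1$. The identical argument applies to $\D_2$.

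Next I would note that the intersection hypothesis transfers to the $\D$'s: since $\ri\D_1=\ri C_1$ and $\ri\D_2=\ri C_2$ by near equality, we have $\ri\D_1\cap\ri\D_2=\ri C_1\cap\ri C_2\neq\fady$. Thus Lemma~\ref{sh:pr} is applicable to both pairs $(C_1,C_2)$ and $(\D_1,\D_2)$.

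The conclusion is then a short comparison. Applying Lemma~\ref{sh:pr}\ref{sh:pr:2} to each pair gives $\ri(C_1\cap C_2)=\ri C_1\cap\ri C_2$ and $\ri(\D_1\cap\D_2)=\ri\D_1\cap\ri\D_2$; since $\ri C_i=\ri\D_i$ the right-hand sides coincide, so $\ri(C_1\cap C_2)=\ri(\D_1\cap\D_2)$. Likewise Lemma~\ref{sh:pr}\ref{sh:pr:3} gives $\overline{C_1\cap C_2}=\overline{C_1}\cap\overline{C_2}$ and $\overline{\D_1\cap\D_2}=\overline{\D_1}\cap\overline{\D_2}$; since $\overline{C_i}=\overline{\D_i}$ these agree as well, so $\overline{C_1\cap C_2}=\overline{\D_1\cap\D_2}$. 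By Definition~\ref{def:n:eq} the two displayed equalities are exactly the assertion $C_1\cap C_2\simeq\D_1\cap\D_2$.

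The only step requiring thought is the first one, namely promoting the near equalities $C_i\simeq\D_i$ into near convexity of the $\D_i$ so that Lemma~\ref{sh:pr} applies; once that is in place the remainder is bookkeeping with the definitions of $\ri$ and of closure. I would expect no genuine obstacle beyond this, and I would not anticipate needing any constraint qualification other than the one already assumed. As an alternative to writing out both the relative-interior and the closure equalities by hand, one could instead invoke Lemma~\ref{sh:pr}\ref{sh:pr:1} to see that $C_1\cap C_2$ and $\D_1\cap\D_2$ are nearly convex and then apply Fact~\ref{ri:part}, reducing the goal to the single equality $\overline{C_1\cap C_2}=\overline{\D_1\cap\D_2}$; but the direct comparison above is equally short and fully self-contained.
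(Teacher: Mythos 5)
Your proposal is correct and takes essentially the same approach as the paper: the one substantive step --- promoting $\D_1,\D_2$ to nearly convex sets via Fact~\ref{lem:need} through the chain $\ri \D_i=\ri C_i\subseteq \D_i\subseteq\overline{\D_i}=\overline{\ri \D_i}$, then transferring the constraint qualification and applying Lemma~\ref{sh:pr} to both pairs --- is exactly the paper's argument. The only divergence is bookkeeping at the end: you verify both defining equalities of near equality directly (via Lemma~\ref{sh:pr}\ref{sh:pr:2}\&\ref{sh:pr:3}), whereas the paper establishes only the closure equality and concludes with Fact~\ref{ri:part}, which is precisely the alternative finish you mention in your last paragraph.
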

\begin{proof}
Let $i\in \stb{1,2}$. 
Since $C_i\simeq \D_i$, 
 by Definition \ref{def:n:eq}
 we have
\begin{equation}\label{d:int:e1}
\ri C_i=\ri \D_i\quad\text{and}\quad \overline{C_i}=\overline{D_i}.
\end{equation}
Hence,
\begin{equation}\label{int:int}
\ri \D_1\cap \ri D_2=\ri C_1\cap \ri C_2\neq \fady.
\end{equation}
Moreover, since  $C_i$ is nearly convex, 
it follows from Fact~\ref{lem:need}
that $\overline{\ri C_i}=\overline{ C_i}$
 and $\ri C_i$ is convex.
Therefore 
\begin{equation}
\ri C_i=\ri D_i\subseteq D_i\subseteq 
\overline{D_i}=\overline{C_i}=\overline{\ri C_i}.
\end{equation}
Hence $D_i$ is nearly convex. 
Applying 
Lemma~\ref{sh:pr}~\ref{sh:pr:2} 
to the two sets 
$C_1$ and $C_2$ implies that
\begin{equation}\label{d:int:e2}
\ri(C_1\cap C_2)=\ri C_1 \cap \ri C_2.
\end{equation}
Similarly we have
\begin{equation}\label{d:int:e3}
\ri(\D_1\cap \D_2)=\ri \D_1 \cap \ri \D_2.
\end{equation}
Using \eqref{int:int} and
Lemma~\ref{sh:pr}\ref{sh:pr:1}, 
applied to the sets $C_1$ and $C_2$
we have
$C_1\cap C_2$ 
is nearly convex.
Similarly, $ \D_1\cap \D_2$
is nearly convex.
By Fact~\ref{lem:need} 
$C_1\cap C_2\simeq \ri (C_1\cap C_2)$
and $D_1\cap D_2\simeq \ri (D_1\cap D_2)$.
Hence 
$\overline{C_1\cap C_2}= \overline{\ri (C_1\cap C_2)}$ and
$\overline{\D_1\cap \D_2}= \overline{\ri (\D_1\cap \D_2)}$.
Combining with \eqref{d:int:e2},
\eqref{int:int} and
\eqref{d:int:e3} yield
\begin{equation}
\overline{C_1\cap C_2}=\overline{\ri(C_1\cap C_2)}
=\overline{\ri C_1\cap \ri C_2}
=\overline{\ri \D_1\cap \ri \D_2}
=\overline{\ri(\D_1\cap \D_2)}
=\overline{\D_1\cap \D_2}.
\end{equation}
Now, Fact~\ref{ri:part} applied to the nearly convex
sets $C_1\cap C_2$ and $ \D_1\cap \D_2$
 implies that 
$C_1\cap C_2 \simeq  \D_1\cap \D_2$.
\end{proof}
\begin{defn}[{ $3^*$ monotone}]~{\rm(See \cite[page~166]{Br-H}.)}
Let $A:X\rra X$ be monotone. 
Then $A$ is \emph{$3^*$ monotone} 
(also known as \emph{rectangular}) if 
\begin{equation*}
(\forall x\in \dom A)(\forall v\in \ran A)
\qquad \inf_{(z,w)\in \gra A}\innp{x-z,v-w}>-\infty.
\end{equation*}
\end{defn}

\begin{fact}
 {\rm (See {\cite[page~167]{Br-H}}.)}
 \label{subd:star}
Let $f:X\to \RR$ be proper, convex, 
lower semicontinuous.
Then $\pt f$ is $3^*$ monotone.
\end{fact}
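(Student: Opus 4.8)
The plan is to reduce everything to the Fenchel--Young inequality together with its equality case. Recall that for proper lower semicontinuous convex $f$ one has the Fenchel--Young inequality $f(z)+f^*(w)\ge\innp{z,w}$ for all $z,w$ in $X$, with equality precisely when $w\in\pt f(z)$. I would fix $x\in\dom\pt f$ and $v\in\ran\pt f$, take an arbitrary pair $(z,w)\in\gra\pt f$, and estimate $\innp{x-z,v-w}$ from below by a quantity that does \emph{not} depend on $(z,w)$; the infimum in the definition of $3^*$ monotonicity will then be controlled by that single lower bound.

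Expanding the inner product gives
\[
\innp{x-z,v-w}=\innp{x,v}-\innp{x,w}-\innp{z,v}+\innp{z,w}.
\]
Since $(z,w)\in\gra\pt f$, the equality case of Fenchel--Young yields $\innp{z,w}=f(z)+f^*(w)$. To the two remaining cross terms I would apply the Fenchel--Young inequality in the forms $-\innp{x,w}\ge-f(x)-f^*(w)$ and $-\innp{z,v}\ge-f(z)-f^*(v)$. Substituting these three relations, the terms $f^*(w)$ and $f(z)$ cancel, leaving the clean bound
\[
\innp{x-z,v-w}\ge\innp{x,v}-f(x)-f^*(v),
\]
whose right-hand side is independent of the chosen pair $(z,w)$.

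It then remains only to verify that this lower bound is a finite real number. Because $x\in\dom\pt f\subseteq\dom f$ and $f$ is proper, $f(x)\in\RR$. Because $v\in\ran\pt f$, there is some $y$ with $v\in\pt f(y)$; the subgradient inequality at $y$ gives $f^*(v)=\sup_u(\innp{v,u}-f(u))\le\innp{v,y}-f(y)<+\infty$, where $f(y)$ is finite since $\pt f(y)\neq\fady$, while $f^*(v)>-\infty$ because $f^*$ is proper. Hence $\innp{x,v}-f(x)-f^*(v)\in\RR$, and taking the infimum over $(z,w)\in\gra\pt f$ yields
\[
\inf_{(z,w)\in\gra\pt f}\innp{x-z,v-w}\ge\innp{x,v}-f(x)-f^*(v)>-\infty,
\]
which is exactly the assertion that $\pt f$ is $3^*$ monotone.

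I expect no serious obstacle here: the computation is short once one recognizes that Fenchel--Young is the right device, used as an equality along the graph of $\pt f$ and as an inequality on the two mixed terms, so that the unwanted quantities telescope away. The only point requiring a little care is the finiteness bookkeeping at the end --- specifically, deducing $v\in\dom f^*$ from $v\in\ran\pt f$, which is where properness of both $f$ and $f^*$ enters.
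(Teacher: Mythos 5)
Your proof is correct. The paper itself gives no proof of this statement --- it is recorded as a Fact with a citation to Brezis--Haraux (page 167) --- so there is no internal argument to compare against; your Fenchel--Young computation (equality $\innp{z,w}=f(z)+f^*(w)$ along $\gra \pt f$, inequality on the two cross terms so that $f(z)$ and $f^*(w)$ cancel, leaving the bound $\innp{x,v}-f(x)-f^*(v)$ independent of $(z,w)$) is precisely the standard proof of this fact, essentially the one in the cited reference. The finiteness bookkeeping is also handled correctly: $x\in\dom\pt f$ forces $f(x)\in\RR$, and $v\in\ran\pt f$ forces $f^*(v)\in\RR$ via the Fenchel--Young equality at any $y$ with $v\in\pt f(y)$, so the lower bound is a genuine real number and the infimum in the definition of $3^*$ monotonicity exceeds $-\infty$.
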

\begin{fact}\label{Br-H}{\rm\bf ({Brezis--Haraux})} 
Let $H$ be a real (not necessarily 
finite-dimensional) Hilbert space,
let $A:H\rra H$ and $B:H\rra H$ 
be monotone operators 
such that $A+B$ is 
maximally monotone and one 
of the following conditions holds:
\begin{enumerate}
\item $A$ and $B$ are $3^*$ monotone.
\item $\dom A \subseteq \dom B$ 
and $B$ is $3^*$ monotone.
\end{enumerate}
Then 
\begin{equation}\label{Br-H-H}
\overline{\ran}(A+B)=\overline{\ran A+\ran B}  
\quad \text{and}\quad  \intr\ran(A+B )=\intr (\ran A+\ran B).
\end{equation}
If $H$ is finite-dimensional, then 
\begin{equation}\label{Br-H-X}
\ran (A+B)~\text{is nearly convex 
\quad and}\quad  \ran(A+B)\simeq \ran A+\ran B.
\end{equation}
\end{fact}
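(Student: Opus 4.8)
This is the classical Brezis--Haraux theorem, and I would follow the original regularization strategy, reserving the near-convexity machinery of Section~\ref{S:eq:nc} for the finite-dimensional refinement \eqref{Br-H-X}. The inclusion $\ran(A+B)\subseteq\ran A+\ran B$ is immediate: if $w\in(A+B)x$ then $w=u+v$ with $u\in Ax\subseteq\ran A$ and $v\in Bx\subseteq\ran B$. Passing to closures and to interiors, this yields $\overline{\ran}(A+B)\subseteq\overline{\ran A+\ran B}$ and $\intr\ran(A+B)\subseteq\intr(\ran A+\ran B)$ for free, so the entire theorem reduces to the reverse inclusions, the decisive one being
\begin{equation*}
\ran A+\ran B\subseteq\overline{\ran}(A+B).
\end{equation*}

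Fix $w=u+v$ with $u\in Aa$ and $v\in Bb$. Since $A+B$ is maximally monotone, Minty's theorem makes $A+B+\lam\Id$ surjective for every $\lam>0$, so there is a point $x_\lam$ with $w\in Ax_\lam+Bx_\lam+\lam x_\lam$; write $w=u_\lam+v_\lam+\lam x_\lam$ with $u_\lam\in Ax_\lam$ and $v_\lam\in Bx_\lam$. Then $u_\lam+v_\lam=w-\lam x_\lam\in\ran(A+B)$, so it suffices to show that $\lam x_\lam\to0$ as $\lam\downarrow0$: this forces $w=\lim_{\lam\downarrow0}(u_\lam+v_\lam)\in\overline{\ran}(A+B)$.

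To control $\lam x_\lam$ I would add the two monotonicity inequalities $\innp{x_\lam-a,u_\lam-u}\ge0$ and $\innp{x_\lam-b,v_\lam-v}\ge0$; since $(u_\lam+v_\lam)-(u+v)=-\lam x_\lam$, the terms carrying $x_\lam$ collapse to $-\lam\norm{x_\lam}^2$, leaving
\begin{equation*}
\lam\norm{x_\lam}^2\le\innp{a,u-u_\lam}+\innp{b,v-v_\lam}.
\end{equation*}
Monotonicity alone cannot bound the right-hand side, because $u_\lam$ and $v_\lam$ are not yet controlled; supplying a one-sided bound here is exactly the role of rectangularity, and is the main obstacle. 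Under hypothesis~(i), the $3^*$ inequalities for $A$ (anchored at a point of $\dom A$ together with the fixed $u\in\ran A$) and for $B$ (anchored at a point of $\dom B$ together with the fixed $v\in\ran B$), evaluated along the running pairs $(x_\lam,u_\lam)\in\gra A$ and $(x_\lam,v_\lam)\in\gra B$, furnish the missing bounds and keep $\lam\norm{x_\lam}^2$ bounded, say by $M$; under~(ii) the inclusion $\dom A\subseteq\dom B$ lets the single rectangular operator $B$ carry both anchors. Once $\lam\norm{x_\lam}^2\le M$ uniformly, the clean identity $\norm{\lam x_\lam}^2=\lam\,(\lam\norm{x_\lam}^2)\le\lam M$ gives $\lam x_\lam\to0$, completing the closure equality in \eqref{Br-H-H}. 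The interior equality follows by running the same regularization at an interior point of $\ran A+\ran B$ and checking that the regularized solutions converge to an exact solution (equivalently, by invoking convexity of $\overline{\ran}(A+B)$); getting the signs and anchors right in these estimates is the delicate, load-bearing part of the whole argument.

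Finally, assume $\dim X<\infty$. Because $A+B$ is maximally monotone, $\ran(A+B)$ is nearly convex by Fact~\ref{D-R-nc}; similarly $\ran A$ and $\ran B$ are nearly convex, so $\ran A+\ran B$ is nearly convex by Fact~\ref{sum:nc}. Two nearly convex sets with equal closures are nearly equal by Fact~\ref{ri:part}, and the closure equality already established in \eqref{Br-H-H} gives exactly $\overline{\ran}(A+B)=\overline{\ran A+\ran B}$; hence $\ran(A+B)\simeq\ran A+\ran B$, which is \eqref{Br-H-X}.
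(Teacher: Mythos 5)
Your overall strategy (Minty regularization of $A+B+\lam\Id$, then $\lam x_\lam\to 0$) is indeed the classical Brezis--Haraux route; note the paper itself does not prove this Fact but simply cites \cite[Theorems~3 and 4]{Br-H} and \cite[Theorem~3.13]{BMW}, so the only question is whether your reconstruction is sound. It is not, and the failure occurs precisely at the step you yourself call load-bearing. The inequality you derive from monotonicity, $\lam\norm{x_\lam}^2\le\innp{a,u-u_\lam}+\innp{b,v-v_\lam}$, cannot be closed by $3^*$ inequalities ``anchored at a point of $\dom A$'' and ``a point of $\dom B$'' as you propose: rectangularity of $A$ at an anchor $(p,u)$ with $p\in\dom A$, evaluated along $(x_\lam,u_\lam)\in\gra A$, bounds $\innp{p-x_\lam,u_\lam-u}$ from above --- a quantity carrying the (possibly unbounded) $x_\lam$ --- and gives no upper bound on $\innp{a,u-u_\lam}$. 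If you add the two rectangular inequalities with \emph{distinct} anchors $p\in\dom A$ and $q\in\dom B$, the $x_\lam$-terms collapse as before but you are left with $\lam\norm{x_\lam}^2\le M+\innp{p,u-u_\lam}+\innp{q,v-v_\lam}$: exactly the same uncontrolled terms reappear, and neither this nor your monotonicity inequality bounds them. The argument only closes with a \emph{common} first-coordinate anchor $x_0\in\dom A\cap\dom B$ (nonempty because $A+B$ is maximally monotone, hence has nonempty domain --- a point you never address): adding $\innp{x_0-x_\lam,u_\lam-u}\le M_A$ and $\innp{x_0-x_\lam,v_\lam-v}\le M_B$ collapses the left side to $\lam\norm{x_\lam}^2-\lam\innp{x_0,x_\lam}$, and the stray term $\lam\innp{x_0,x_\lam}\le\lam\norm{x_0}\,\norm{x_\lam}$ can be absorbed into the quadratic, yielding boundedness of $\lam\norm{x_\lam}^2$ and hence $\lam x_\lam\to 0$. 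Under hypothesis~(ii) the common anchor is $a$ itself: pair monotonicity of $A$ at $(a,u)$ with rectangularity of $B$ at $(a,v)$, legitimate because $a\in\dom A\subseteq\dom B$. In short, your inequality $(\ast)$ is a dead end and must be replaced, not supplemented.

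Two further gaps. First, the interior equality in \eqref{Br-H-H} does not follow by ``running the same regularization at an interior point and checking convergence'': the regularization only ever produces $w\in\overline{\ran}(A+B)$, since the points $x_\lam$ need not remain bounded. One needs the separate fact (due to Rockafellar, via local boundedness of maximally monotone operators at interior points) that $\intr\overline{\ran}\,C\subseteq\ran C$ for $C$ maximally monotone; applying this to $C=A+B$ gives $\intr(\ran A+\ran B)\subseteq\intr\overline{\ran A+\ran B}=\intr\overline{\ran}(A+B)\subseteq\ran(A+B)$, and the reverse inclusion is your easy one. Second, in the finite-dimensional part you invoke Fact~\ref{D-R-nc} to claim $\ran A$ and $\ran B$ are nearly convex, but $A$ and $B$ are only assumed monotone, not maximally monotone, so that Fact does not apply to them. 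The repair is a sandwich argument: $\ran(A+B)\subseteq\ran A+\ran B\subseteq\overline{\ran A+\ran B}=\overline{\ran}(A+B)$, and since $\ran(A+B)$ is nearly convex (here Fact~\ref{D-R-nc} does apply, to the maximally monotone $A+B$), any convex set squeezed under $\ran(A+B)$ also witnesses near convexity of $\ran A+\ran B$; then Fact~\ref{ri:part} finishes as you say.
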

 \begin{proof}
See {\rm\cite[Theorems~3 and 4]{Br-H}} for the proof
of \eqref{Br-H-H}
and {\rm\cite[Theorem~3.13]{BMW}}
for the proof
of \eqref{Br-H-X}.
 \end{proof}

Example~\ref{Br-H-opt1} and 
Proposition~\ref{cn:ex3}
illustrate that 
the results of Fact~\ref{Br-H}
are optimal in the sense that actual equality fails.

 \begin{ex}\label{Br-H-opt1}
 Suppose that $X=\RR^2$ and let 
 $f:\RR^2\to]-\infty,+\infty]:(\xi_1,\xi_2)
 \mapsto\max\stb{g(\xi_1),\abs{\xi_2}}$,
 where $g(\xi_1)=1-\sqrt{\xi_1}$ if $\xi_1\ge 0$, 
 $g(\xi_1)=+\infty$ if $\xi_1< 0$. 
 Set $A=\pt f^*$. Then
 $A$ is $3^*$ monotone and
 $2A=A+A$ is maximally monotone,
yet
 \begin{equation}
 2\ran A=\ran 2A=\ran (A+A)\subsetneqq \ran A+\ran A.
 \end{equation}
 \end{ex}
 \begin{proof}
 First notice that
 by Fact~\ref{subd:star} 
$A$ is $3^*$ monotone.
Moreover, since $A$ is maximally monotone, it
follows from
Fact~\ref{lem:need} that
$\ri\dom A $ is nonempty and convex.
Since 
$\ri \dom A\cap \ri \dom A=\ri \dom A\neq \fady$,
Fact~\ref{maxm_rock} implies that $A+A=2A$ 
 is maximally monotone.
 Using \cite[Proposition~16.24]{BC2011} and
  \cite[example on page~218]{Rock70} we know that 
 $\ran \pt f^*=\ran (\pt f)^{-1}=
 \dom \pt f= \menge{(\xi_1,\xi_2)}{
 \xi_1> 0}\cup
 \menge{(0,\xi_2)}{\abs{\xi_2}\ge 1}$
 and 
 $\ran (A+A)= \menge{(\xi_1,\xi_2)}{
 \xi_1> 0}\cup
 \menge{(0,\xi_2)}{
\abs{\xi_2}\ge 2}\neq \menge{(\xi_1,\xi_2)}{
 \xi_1\ge 0}=\ran A+\ran A$.
\end{proof}
 \section{Attouch--Th\'{e}ra duality and the normal problem}
 \label{S:AT:T}

This section provides a review of the Attouch--Th\'era duality
and the associated normal problem.
From now on, we assume that
\begin{empheq}[box=\mybluebox]{equation*}
A:X\rras X \text{~~and~~} B:X\rras X
\text{~~~are maximally monotone}.
\end{empheq}
We abbreviate
\begin{equation*}
A^\ovee:=(-\Id)\circ A\circ (-\Id), \quad
A^{-\ovee}:=(A^{-1})^\ovee=(A^\ovee)^{-1}, 
\end{equation*}
and we observe that $A^\ovee$ is maximally monotone
as is $(A^{-1})^\ovee=(A^\ovee)^{-1}$.
The \emph{primal problem} associated with
the ordered pair $(A,B)$
is to find the zeros of $A+B$. 
Since $A$ and $B$ are maximally monotone operators,
so are $A^{-1}$ and $B^{-\ovee}$.
The \emph{dual pair}
of $(A,B)$ is defined by 
\begin{equation*}
\label{dual:op}
(A,B)^* := (A^{-1},B^{-\ovee}).
\end{equation*}
We now recall the definition of the dual problem.
\begin{definition}[(Attouch--Th\'era) dual problem]
The \emph{(Attouch--Th\'era) dual problem} 
for the ordered pair $(A,B)$
is to find the set of zeros of $A^{-1}+B^{-\ovee}$.
\end{definition}
From now on, we shall use $T_{(A,B)}$
to refer to \emph{the Douglas--Rachford splitting 
operator} for two operators $
A$ and $B$, defined as
\begin{empheq}[box=\mybluebox]{equation}
\label{e:done}
T_{(A,B)}:=\tfrac{1}{2}\Id+\tfrac{1}{2}\bR_B\bR_A
= \Id-J_A + J_BR_A. 
\end{empheq}

\begin{fact}[self-duality]
{\rm(See \cite[Lemma~3.6~on~page~133]{EckThesis}
or \cite[Corollary~4.3]{JAT2012}.)}
\label{s:dual}
\begin{equation}\label{eq:dual}
T_{(A^{-1},B^{-\ovee})}
= T_{(A,B)}. 
\end{equation}
\end{fact}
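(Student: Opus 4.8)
The plan is to reduce the claimed identity to the defining formula $T_{(A,B)}=\tfrac12\Id+\tfrac12 R_BR_A$ by computing the reflected resolvents of the two dual operators $A^{-1}$ and $B^{-\ovee}$ and checking that their composition reproduces $R_BR_A$. Throughout I would write $L:=-\Id$ and use repeatedly that $L$ is a self-inverse linear isometry, so $L^{-1}=L$ and $L$ distributes over the (Minkowski) sums that appear in resolvents.

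First I would record the behaviour of the reflected resolvent under inversion. From the inverse resolvent identity \eqref{inv:res} we have $J_{A^{-1}}=\Id-J_A$, whence $R_{A^{-1}}=2J_{A^{-1}}-\Id=\Id-2J_A=-R_A$. Thus inversion of a maximally monotone operator simply negates its reflected resolvent. Next I would compute the effect of the $\ovee$ operation. Writing $A^\ovee=LAL$ and using linearity of $L$ gives the conjugation identity $\Id+A^\ovee=L(\Id+A)L$; inverting yields $J_{A^\ovee}=LJ_AL$, and hence $R_{A^\ovee}=2LJ_AL-\Id=LR_AL$.

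Combining these two computations for the second operator (applying the inversion rule with $C=B^\ovee$, since $B^{-\ovee}=(B^\ovee)^{-1}$, and then expanding $R_{B^\ovee}=LR_BL$) produces $R_{B^{-\ovee}}=-R_{B^\ovee}=-LR_BL$, that is, $R_{B^{-\ovee}}(x)=R_B(-x)$ for every $x$; equivalently $R_{B^{-\ovee}}=R_B\circ(-\Id)$. Finally I would substitute into the definition. Since $R_{A^{-1}}=-R_A$ and $R_{B^{-\ovee}}=R_B\circ(-\Id)$, for every $x$ we obtain $R_{B^{-\ovee}}\big(R_{A^{-1}}(x)\big)=R_B\big(-(-R_A(x))\big)=R_B\big(R_A(x)\big)$, so the two sign flips cancel. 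Hence $R_{B^{-\ovee}}R_{A^{-1}}=R_BR_A$, and therefore $T_{(A^{-1},B^{-\ovee})}=\tfrac12\Id+\tfrac12 R_{B^{-\ovee}}R_{A^{-1}}=\tfrac12\Id+\tfrac12 R_BR_A=T_{(A,B)}$, as claimed.

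I do not anticipate a genuine analytic obstacle: the whole argument is algebraic manipulation of resolvents, resting only on the inverse resolvent identity \eqref{inv:res} and the conjugation identity $\Id+A^\ovee=(-\Id)(\Id+A)(-\Id)$. The one delicate point is the bookkeeping of signs — specifically, ensuring that the negation produced by the inverse (inside $R_{A^{-1}}$) is exactly the reflection-through-the-origin absorbed by $R_{B^{-\ovee}}$, so that the composition lands back on $R_BR_A$ rather than on a sign-twisted variant.
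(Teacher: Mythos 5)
Your proposal is correct. Note, however, that the paper does not prove this statement at all: it is recorded as a \emph{Fact} and disposed of by citation (Eckstein's thesis, Lemma~3.6, or \cite[Corollary~4.3]{JAT2012}), so there is no internal proof to compare against. Your argument is a valid, self-contained verification, and it is essentially the standard one from the cited literature: the three identities you derive, namely $R_{C^{-1}}=-R_C$ (immediate from the inverse resolvent identity \eqref{inv:res}), $R_{C^{\ovee}}=(-\Id)\circ R_C\circ(-\Id)$ (from conjugating $\Id+C$ by $-\Id$), and hence $R_{B^{-\ovee}}=R_B\circ(-\Id)$, make the two sign flips cancel in the composition, giving $R_{B^{-\ovee}}R_{A^{-1}}=R_BR_A$ and therefore $T_{(A^{-1},B^{-\ovee})}=T_{(A,B)}$. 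Your sign bookkeeping is right: writing $L=-\Id$, one has $R_{A^{-1}}=L\circ R_A$ and $R_{B^{-\ovee}}=L\circ(L\circ R_B\circ L)=R_B\circ L$, so the composition is $R_B\circ L\circ L\circ R_A=R_BR_A$. One hypothesis you use silently but legitimately is maximal monotonicity of $A$ and $B$, which makes $J_A$, $J_B$ single-valued with full domain so that all compositions are genuine maps on $X$; also note that your use of $B^{-\ovee}=(B^{\ovee})^{-1}$ is consistent with the paper's definition $A^{-\ovee}:=(A^{-1})^{\ovee}=(A^{\ovee})^{-1}$. What your route buys over the paper's is self-containedness: the fact becomes a two-line consequence of \eqref{inv:res} and a conjugation identity, rather than an appeal to external sources; what the citation buys the authors is brevity.
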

\begin{fact}{\rm (See \cite[Proposition~4.1]{EckThesis} or
\cite[Corollary~2.14]{Sicon2014}.)}
\begin{align}
\ran (\Id-T_{(A,B)})&=\menge{a-b}{(a,a^*)\in \gr A,
 (b,b^*)\in \gr B ,~a-b=a^* +b^*}\label{con:eq}\\
&\subseteq (\dom A -\dom B)\cap (\ran A +\ran B).\label{easyinc}
\end{align}
\end{fact}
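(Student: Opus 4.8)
The plan is to establish the equality \eqref{con:eq} first, and then to read off the inclusion \eqref{easyinc} directly from the description of the set on its right-hand side. The main engine throughout will be the Minty parametrization \eqref{Min:par}, which converts statements about points on the graphs of $A$ and $B$ into statements about their resolvents, and conversely. The only algebraic ingredient beyond this is the identity $\Id-T_{(A,B)}=J_A-J_B R_A$, obtained immediately from the definition \eqref{e:done}.

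First I would prove the inclusion $\subseteq$ in \eqref{con:eq}. Fix $x\in X$ and set $a:=J_A x$ and $a^*:=x-a$; by \eqref{Min:par} we have $(a,a^*)\in\gr A$ and $x=a+a^*$. A short computation with the reflected resolvent gives $R_A x=2J_A x-x=2a-x=a-a^*$. Next set $b:=J_B R_A x$ and $b^*:=R_A x-b$; again by \eqref{Min:par}, applied now to $B$, we have $(b,b^*)\in\gr B$ and $R_A x=b+b^*$. Since $R_A x=a-a^*=b+b^*$, rearranging yields the constraint $a-b=a^*+b^*$. Finally, $(\Id-T_{(A,B)})x=J_A x-J_B R_A x=a-b$, so this value lies in the right-hand set.

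For the reverse inclusion $\supseteq$ I would run the same correspondence backwards. Given $(a,a^*)\in\gr A$ and $(b,b^*)\in\gr B$ with $a-b=a^*+b^*$, set $x:=a+a^*$. Minty's bijection forces $J_A x=a$ and $x-J_A x=a^*$, whence $R_A x=2a-x=a-a^*$. The constraint $a-b=a^*+b^*$ rearranges to $a-a^*=b+b^*$, so $R_A x=b+b^*$; since $(b,b^*)\in\gr B$, Minty applied to $B$ gives $J_B R_A x=b$. Therefore $(\Id-T_{(A,B)})x=J_A x-J_B R_A x=a-b$, which shows $a-b\in\ran(\Id-T_{(A,B)})$ and completes \eqref{con:eq}.

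The inclusion \eqref{easyinc} is then immediate from the membership conditions defining the set in \eqref{con:eq}: for any element $a-b$ of that set we have $a\in\dom A$ and $b\in\dom B$, so $a-b\in\dom A-\dom B$; moreover $a^*\in\ran A$ and $b^*\in\ran B$, and the constraint $a-b=a^*+b^*$ then places $a-b$ in $\ran A+\ran B$ as well, giving $a-b\in(\dom A-\dom B)\cap(\ran A+\ran B)$. I do not anticipate a genuine obstacle here; the only point requiring care is the bookkeeping in the reflected-resolvent computation together with the equivalent rewritings of the constraint $a-b=a^*+b^*\iff a-a^*=b+b^*$, which is precisely what makes the Minty correspondence close up in both directions.
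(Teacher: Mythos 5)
Your proof is correct: both inclusions in \eqref{con:eq} follow exactly as you argue from the Minty parametrization \eqref{Min:par} applied to $A$ at $x=a+a^*$ and to $B$ at $R_Ax=a-a^*=b+b^*$, together with $\Id-T_{(A,B)}=J_A-J_BR_A$, and \eqref{easyinc} is then immediate from the membership constraints. The paper itself gives no proof of this Fact (it cites \cite{EckThesis} and \cite{Sicon2014}), and your argument is essentially the standard one found in those references, so there is nothing further to reconcile.
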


\begin{fact}{\rm(See \cite[Proposition~2.16]{Sicon2014}.)}
\label{T:vs:Id:T}
$T_{(A,B)}=\Id-T_{(A,B^{-1})}$.
\end{fact}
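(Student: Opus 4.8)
The plan is to reduce everything to the definition \eqref{e:done} of the Douglas--Rachford operator together with the inverse resolvent identity \eqref{inv:res}. The single algebraic fact that drives the proof is that passing from $B$ to $B^{-1}$ flips the sign of the reflected resolvent. Indeed, applying \eqref{inv:res} to $B$ gives $J_{B^{-1}}=\Id-J_B$, whence
\[
R_{B^{-1}}=2J_{B^{-1}}-\Id=2(\Id-J_B)-\Id=-(2J_B-\Id)=-R_B.
\]
This identity $R_{B^{-1}}=-R_B$ is the only nontrivial ingredient, and it is worth recording separately since it also transparently explains the self-duality in Fact~\ref{s:dual}.

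With it in hand, I would simply substitute into the definition \eqref{e:done} of the Douglas--Rachford operator for the pair $(A,B^{-1})$, obtaining
\[
T_{(A,B^{-1})}=\tfrac{1}{2}\Id+\tfrac{1}{2}R_{B^{-1}}R_A=\tfrac{1}{2}\Id-\tfrac{1}{2}R_BR_A.
\]
Subtracting from the identity and invoking \eqref{e:done} once more then yields
\[
\Id-T_{(A,B^{-1})}=\Id-\tfrac{1}{2}\Id+\tfrac{1}{2}R_BR_A=\tfrac{1}{2}\Id+\tfrac{1}{2}R_BR_A=T_{(A,B)},
\]
which is exactly the claim.

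There is essentially no obstacle here: the result is a one-line consequence of the reflected-resolvent sign flip. The only point requiring a modicum of care is to confirm that $B^{-1}$ is maximally monotone, so that $J_{B^{-1}}$ and $R_{B^{-1}}$ are single-valued and everywhere defined and hence $T_{(A,B^{-1})}$ is a legitimate operator; but this is immediate, since the inverse of a maximally monotone operator is maximally monotone, and the resolvent of a maximally monotone operator is single-valued with full domain. Everything else is a direct substitution into \eqref{e:done} and \eqref{inv:res}, so I would expect the formal write-up to be just a few lines.
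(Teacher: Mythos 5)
Your proof is correct: the sign flip $R_{B^{-1}}=-R_B$, which follows from the inverse resolvent identity \eqref{inv:res}, substituted into the definition \eqref{e:done}, immediately gives $\Id-T_{(A,B^{-1})}=T_{(A,B)}$. The paper offers no proof of its own for this statement (it is quoted as a fact from \cite[Proposition~2.16]{Sicon2014}), and your computation is precisely the standard argument behind that citation, including the correct side remark that $B^{-1}$ is maximally monotone, so that $J_{B^{-1}}$, $R_{B^{-1}}$, and hence $T_{(A,B^{-1})}$ are single-valued and defined on all of $X$.
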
 

Let $w\in X$ be fixed. For the operator 
$A$, the \emph{inner and outer 
shifts} associated with $A$ are defined by 
\begin{align}
\inns[w]{A}\colon& X\To X \colon x\mapsto A(x-w),\\
\outs[w]{A}\colon & X\To X \colon x\mapsto Ax-w.
\end{align}
Notice that $\inns[w]{A}$ and 
$\outs[w]{A}$ are maximally monotone, with
$\dom \inns[w]{A}=\dom A+w$ and $\dom \outs[w]{A}=\dom A$. 

\begin{defn}[The $w$-perturbed problem]
{\rm{(See \cite[Definition~3.1]{Sicon2014}.)}}
The \emph{$w$-perturbed problem}
associated with the pair $(A,B)$
is to determine the set of zeros 
\begin{equation}\label{defn:Zw}
Z_w:=( \outs[w]{A} +\inns[w]{B}  )^{-1}(0)
=\menge{x\in X}{ w\in A x +B(x-w)}.
\end{equation}
\end{defn}
\begin{fact}{\rm (See \cite[Proposition~3.3]{Sicon2014}.)}
\label{chrac:1}
Let $w\in X$. Then
\begin{equation}\label{def:zw}
Z_w\neq \fady \iff w\in \ran (\Id-T_{(A,B)})\iff w\in \ran(A+\inns{B}).
\end{equation}
\end{fact}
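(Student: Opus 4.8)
The plan is to establish the two equivalences separately; both reduce to unwinding definitions once the correct substitution between a point $x\in Z_w$ and a pair of graph points is identified.

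First I would dispose of the equivalence $Z_w\neq\fady\iff w\in\ran(A+\inns[w]{B})$, which is immediate. By definition of the inner shift, $(A+\inns[w]{B})x=Ax+B(x-w)$, so $w\in\ran(A+\inns[w]{B})$ holds precisely when there exists $x\in X$ with $w\in Ax+B(x-w)$. But this is exactly the condition defining membership in $Z_w=\menge{x\in X}{w\in Ax+B(x-w)}$, so the two statements assert the same thing and no work beyond parsing the notation is required.

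The core of the argument is the equivalence $Z_w\neq\fady\iff w\in\ran(\Id-T_{(A,B)})$, for which I would invoke the graphical characterization \eqref{con:eq}, namely $\ran(\Id-T_{(A,B)})=\menge{a-b}{(a,a^*)\in\gra A,\ (b,b^*)\in\gra B,\ a-b=a^*+b^*}$. The key device is the substitution $a=x$, $b=x-w$. For the forward direction, if $x\in Z_w$ then there exist $a^*\in Ax$ and $b^*\in B(x-w)$ with $w=a^*+b^*$; setting $a:=x$ and $b:=x-w$ gives $(a,a^*)\in\gra A$, $(b,b^*)\in\gra B$, and $a-b=w=a^*+b^*$, so $w\in\ran(\Id-T_{(A,B)})$. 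Conversely, given $(a,a^*)\in\gra A$ and $(b,b^*)\in\gra B$ with $a-b=w=a^*+b^*$, set $x:=a$; then $x-w=a-(a-b)=b$, so $a^*\in Ax$ and $b^*\in B(x-w)$ with $a^*+b^*=w$, whence $x\in Z_w$.

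The only genuine point to check is that the shift $b=x-w$ is consistent with the inner-shift structure of $\inns[w]{B}$, that is, that $b^*\in B(b)$ translates correctly to $b^*\in B(x-w)$ and that the single subtraction of $w$ in $a-b$ matches the sign convention in the definition of $Z_w$; this is purely a matter of tracking the definitions of the inner and outer shifts. Since no maximality or constraint qualification beyond the standing assumptions is needed, there is no analytic obstacle, and the proof consists entirely of these two substitution arguments.
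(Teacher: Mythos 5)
Your proposal is correct. Note that the paper itself offers no proof of this statement --- it is recorded as a Fact with a citation to \cite[Proposition~3.3]{Sicon2014} --- so the only meaningful check is whether your argument is sound within the paper's toolkit, and it is. Your first equivalence is indeed pure bookkeeping: since $\outs[w]{A}+\inns[w]{B} = \bigl(A+\inns[w]{B}\bigr)-w$, the zero set $Z_w$ in \eqref{defn:Zw} is nonempty exactly when $w\in\ran\bigl(A+\inns[w]{B}\bigr)$. Your second equivalence correctly leverages the graphical description \eqref{con:eq}, which the paper states as a Fact immediately before: the substitution $a=x$, $b=x-w$ (and its inverse $x=a$, $w=a-b$) is precisely the bridge between ``$w\in Ax+B(x-w)$ for some $x$'' and ``$w=a-b=a^*+b^*$ for some $(a,a^*)\in\gra A$, $(b,b^*)\in\gra B$,'' and both directions check out with no hidden use of maximality. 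The one remark worth making is that your proof makes the Fact a corollary of \eqref{con:eq}, whereas the cited source establishes it in the course of developing the perturbed problem (via the relation between the Douglas--Rachford operators of the shifted pair and of the original pair); your route is shorter and arguably the natural one to take inside this paper, since \eqref{con:eq} is already on record.
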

\begin{defn}[normal problem and infimal displacement vector]
\label{def:n:p}
The \emph{normal problem} associated 
with the pair $(A,B)$
is the $v_{(A,B)}$-perturbed problem, where
$v_{(A,B)}$ is \emph{the infimal displacement vector}
for the pair $(A,B)$ defined as
\begin{equation}\label{defn:v}
v_{(A,B)}:=\PR_{\overline{\ran}(\Id-T_{(A,B)})}(0).
\end{equation}
\end{defn}

\begin{fact}{\rm(See \cite[Proposition~3.11]{Sicon2014}.)}
\label{eq:norm}
\begin{equation}
\norm{v_{(A,B)}}=\norm{v_{(B,A)}}.
\end{equation}
\end{fact}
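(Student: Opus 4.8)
The plan is to translate the equality of the two projections into an equality of two infima of displacement norms, and then to exploit that the relevant operators are compositions of the \emph{same} pair of nonexpansive maps taken in opposite orders.

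First I would record two elementary facts: for any nonempty $S\subseteq X$ one has $d(0,\overline{S})=\inf_{z\in S}\norm{z}$, and the projection $\PR_C(0)$ of the origin onto a nonempty closed convex set $C$ is its nearest point to $0$, so that $\norm{\PR_C(0)}=d(0,C)$. Applying this with $C=\overline{\ran}(\Id-T_{(A,B)})$ and using \eqref{defn:v} gives
\[
\norm{v_{(A,B)}}=\inf_{x\in X}\norm{x-T_{(A,B)}x}.
\]
Since $T_{(A,B)}=\tfrac12\Id+\tfrac12 R_BR_A$ by \eqref{e:done}, we have $\Id-T_{(A,B)}=\tfrac12(\Id-R_BR_A)$, and hence
\[
\norm{v_{(A,B)}}=\tfrac12\inf_{x\in X}\norm{x-R_BR_Ax},
\qquad
\norm{v_{(B,A)}}=\tfrac12\inf_{x\in X}\norm{x-R_AR_Bx}.
\]
Recall that $J_A$ and $J_B$ are firmly nonexpansive, so $R_A=2J_A-\Id$ and $R_B=2J_B-\Id$ are nonexpansive.

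The heart of the argument is the following commutation principle for nonexpansive maps: if $S\colon X\to X$ and $R\colon X\to X$ are nonexpansive, then $\inf_{x}\norm{x-RSx}=\inf_{x}\norm{x-SRx}$. To prove it, fix $x\in X$ and set $y:=Sx$; then $y-SRy=Sx-S(RSx)$, so nonexpansiveness of $S$ yields $\norm{y-SRy}\le\norm{x-RSx}$, whence $\inf_y\norm{y-SRy}\le\inf_x\norm{x-RSx}$. Interchanging the roles of $R$ and $S$ (now using that $R$ is nonexpansive, via the substitution $x:=Ry$) gives the reverse inequality, and therefore equality. Applying this with $S=R_A$ and $R=R_B$ produces $\inf_x\norm{x-R_BR_Ax}=\inf_x\norm{x-R_AR_Bx}$, which, combined with the two displayed formulas, yields $\norm{v_{(A,B)}}=\norm{v_{(B,A)}}$.

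The only genuinely delicate point is the first reduction: it requires that $\overline{\ran}(\Id-T_{(A,B)})$ be nonempty, closed, and convex, so that $\PR$ is well defined and $\norm{\PR_C(0)}=d(0,C)$ holds. This is the standing hypothesis built into \eqref{defn:v}, and convexity of $\overline{\ran}(\Id-T_{(A,B)})$ follows from the firm nonexpansiveness of $T_{(A,B)}$. Everything after that is elementary, the one clever move being the substitution $y=Sx$, which converts a statement about the order of composition into a single application of nonexpansiveness. I expect that substitution to be the main (if small) obstacle; the remaining steps are just bookkeeping with \eqref{e:done} and the definition of the reflected resolvents.
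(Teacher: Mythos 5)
Your argument is correct, but note that there is nothing internal to compare it against: the paper states this result as a Fact and outsources its proof entirely to the citation \cite[Proposition~3.11]{Sicon2014}. Your proposal therefore does something genuinely different --- it makes the paper self-contained at this point. The reduction $\norm{v_{(A,B)}}=\inf_{x\in X}\norm{x-T_{(A,B)}x}=\tfrac12\inf_{x\in X}\norm{x-R_BR_Ax}$ is sound: $\ran(\Id-T_{(A,B)})$ is nonempty because $T_{(A,B)}$ has full domain, and convexity of $\overline{\ran}(\Id-T_{(A,B)})$ is exactly what the paper itself establishes inside the proof of Theorem~\ref{main:conc} (maximal monotonicity of $\Id-T_{(A,B)}$ via \cite[Example~20.26]{BC2011}, then Fact~\ref{D-R-nc}), so your appeal to firm nonexpansiveness is legitimate in this finite-dimensional setting. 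The commutation principle, with the substitutions $y:=Sx$ and $x:=Ry$, is a clean and correct way to see that the minimal displacements of $RS$ and $SR$ coincide for nonexpansive $R,S$; applied to $S=R_A$, $R=R_B$ it gives precisely what is needed. An equivalent, slightly more structured packaging of the same idea is the intertwining identity
\begin{equation*}
R_A\,T_{(A,B)}=\tfrac12\bigl(R_A+R_AR_BR_A\bigr)=T_{(B,A)}\,R_A,
\end{equation*}
which follows directly from \eqref{e:done}; applying it to an arbitrary $x$ and using nonexpansiveness of $R_A$ yields $\norm{R_Ax-T_{(B,A)}(R_Ax)}=\norm{R_A x - R_A T_{(A,B)}x}\le\norm{x-T_{(A,B)}x}$, hence $\norm{v_{(B,A)}}\le\norm{v_{(A,B)}}$, and symmetry finishes. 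Your route buys a little extra generality (order-independence of the minimal displacement for any composition of two nonexpansive self-maps), while the citation route buys brevity; both are legitimate, and your proof has no gaps.
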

In view of Definition~\ref{def:n:p} and Fact~\ref{eq:norm}, 
the magnitude of the vector $v_{(A,B)}$
is actually a measure of how far 
the original problem is from
the normal problem. This magnitude is the same 
for the pairs $(A,B)$ and $(B,A)$.

We now explore how $\ran(\Id-\T)$ 
is related to the set $(\dom A-\dom B)\cap (\ran A+\ran B)$. 
We will prove that when the operators $A$ and $B$
 are ``sufficiently nice'', we have
\begin{equation}\label{awl:1}
 \ran({\Id -T_{(A,B)}})\simeq(\dom A -\dom B)\cap (\ran A +\ran B).
 \end{equation}

In general, \eqref{awl:1} may fail spectacularly as we will now illustrate. 
\begin{ex}\label{ex:1}
Suppose that $X=\RR^2$,
and that 
\begin{equation}
A=\begin{pmatrix}
0&-1\\
1&0
\end{pmatrix}
\quad\text{and}\quad
B=-A=\begin{pmatrix}
0&1\\
-1&0
\end{pmatrix}.
\end{equation}
Then 
\begin{equation}
\ran(\Id-T_{(A,B)})=\stb{0}\subsetneqq \RR^2=
(\dom A -\dom B)\cap (\ran A +\ran B).
\end{equation}
\end{ex}
\begin{proof}
Recall that $\dom A=\dom B =\ran A =\ran B=\RR^2$, 
consequently $(\dom A -\dom B)\cap 
(\ran A +\ran B)=\RR^2$. 
On the other hand, 
one checks that 
$\bR_A:(x,y)\mapsto(y,-x)=B$ 
 and $\bR_B:(x,y)\mapsto(-y,x)=A$.
Hence $\bR_B\bR_A=\Id$ and therefore  
 $\Id-T_{(A,B)}=\tfrac{1}{2}(\Id-\bR_B\bR_A)\equiv0$. 
\end{proof}

\section{Main results}\label{S:Main}

Upholding the notation of Section~\ref{S:AT:T}, we also 
set 
\begin{empheq}[box=\mybluebox]{equation}\label{def:D:R}
D:=D_{(A,B)}:=\dom A-\dom B\quad\text{and}\quad R:=R_{(A,B)}:=\ran A+\ran B.
\end{empheq}

We start by proving some auxiliary 
results.
\begin{lem}\label{lem:HB}
The following hold:
\begin{enumerate}
 \item\label{lem:HB:2}
 The sets $D$ and $R$
are nearly convex.
 \item\label{lem:HB:1}
 $\ri D\cap \ri R\neq \fady$.
 \item\label{lem:HB:3}
 $D\cap R$ is nearly convex.
 \item\label{lem:HB:6}
 $\ri(D\cap R)=\ri D\cap \ri R$ .
 \item\label{lem:HB:4}
$\overline{D\cap R}=\overline{\ri D\cap\ri R}$.
\item\label{lem:HB:5}
$\overline{D\cap R}=\overline{D}\cap \overline{R}$.
\end{enumerate} 
\end{lem}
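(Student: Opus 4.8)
The plan is to establish \ref{lem:HB:2} and \ref{lem:HB:1} from scratch and then to read off \ref{lem:HB:3}--\ref{lem:HB:5} as direct applications of Lemma~\ref{sh:pr} with $C:=D$ and $\D:=R$. For \ref{lem:HB:2}, I would invoke Fact~\ref{D-R-nc} to get that $\dom A,\dom B,\ran A,\ran B$ are all nearly convex, and then apply Fact~\ref{sum:nc} (with coefficients $1,-1$ for $D=\dom A-\dom B$ and $1,1$ for $R=\ran A+\ran B$) to conclude both that $D$ and $R$ are nearly convex and that
\begin{equation}
\ri D=\ri\dom A-\ri\dom B\quad\text{and}\quad\ri R=\ri\ran A+\ri\ran B.
\end{equation}
By Fact~\ref{lem:need}, each of $\ri\dom A,\ri\dom B,\ri\ran A,\ri\ran B$ is then nonempty.

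The crux is \ref{lem:HB:1}. Here I would exploit the Minty parametrization \eqref{Min:par}: for every $x\in X$ the pair $(\bJ_B x,\,x-\bJ_B x)$ lies in $\gra B$ and sums to $x$, so that $x\in\dom B+\ran B$; hence $\dom B+\ran B=X$. Applying Fact~\ref{sum:nc} once more yields $\ri\dom B+\ri\ran B=\ri(\dom B+\ran B)=\ri X=X$. Now I would pick any $a\in\ri\dom A$ and $a^*\in\ri\ran A$ (possible by the previous paragraph). Since $a-a^*\in X=\ri\dom B+\ri\ran B$, there exist $b\in\ri\dom B$ and $b^*\in\ri\ran B$ with $a-a^*=b+b^*$, equivalently $a-b=a^*+b^*$. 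Setting $w:=a-b=a^*+b^*$, the first representation gives $w\in\ri\dom A-\ri\dom B=\ri D$ and the second gives $w\in\ri\ran A+\ri\ran B=\ri R$, so $w\in\ri D\cap\ri R$, which proves \ref{lem:HB:1}.

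With \ref{lem:HB:2} and \ref{lem:HB:1} in hand, the hypotheses of Lemma~\ref{sh:pr} are met for the nearly convex sets $C=D$ and $\D=R$, since they are nearly convex with $\ri D\cap\ri R\neq\fady$. Then \ref{lem:HB:3} is exactly Lemma~\ref{sh:pr}\ref{sh:pr:1}; \ref{lem:HB:6} is Lemma~\ref{sh:pr}\ref{sh:pr:2}; \ref{lem:HB:5} is Lemma~\ref{sh:pr}\ref{sh:pr:3}; and \ref{lem:HB:4} follows from Lemma~\ref{sh:pr}\ref{sh:pr:4}, because the near equality $D\cap R\simeq\ri D\cap\ri R$ in particular forces equal closures $\overline{D\cap R}=\overline{\ri D\cap\ri R}$.

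The only genuinely new input is \ref{lem:HB:1}; everything else is bookkeeping with the near-convexity calculus of Section~\ref{S:eq:nc}. I expect the main obstacle to be the correct use of the Minty parametrization to extract the surjectivity $\dom B+\ran B=X$, together with the care needed to pass this to relative interiors via the $\ri$-of-a-sum identity in Fact~\ref{sum:nc} (which applies precisely because domains and ranges of maximally monotone operators are nearly convex), and then to verify that the single constructed point $w$ lands in \emph{both} $\ri D$ and $\ri R$ simultaneously.
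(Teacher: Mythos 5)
Your proposal is correct and follows essentially the same route as the paper: near convexity of $D$ and $R$ via Fact~\ref{D-R-nc} and Fact~\ref{sum:nc}, the Minty parametrization to get $\dom B+\ran B=X$, the relative-interior calculus to pass to $\ri$, and Lemma~\ref{sh:pr} for parts \ref{lem:HB:3}--\ref{lem:HB:5}. Your explicit construction of the common point $w$ is just an unwound version of the paper's one-line argument that $0\in\ri X=\ri R-\ri D$, hence $\ri D\cap\ri R\neq\fady$.
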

\begin{proof}
\ref{lem:HB:2}: Combine Fact~\ref{D-R-nc} 
and Fact~\ref{sum:nc}.
\ref{lem:HB:1}: Since $B$ is maximally monotone,
the Minty parametrization 
\eqref{Min:par} implies that $X=\dom B+\ran B$.
Hence by \ref{lem:HB:2}
and Fact~\ref{sum:nc}
\begin{equation}\label{ri:1}
0\in X=\ri X=\ri(\ran A+\ran B-(\dom A-\dom B))=\ri R-\ri D.
\end{equation}
Hence, $\ri D\cap \ri R\neq \fady $, as claimed.
(Note that we did not use the maximal
monotonicity of $A$ in this proof.)
\ref{lem:HB:3}: Combine \ref{lem:HB:2}, \ref{lem:HB:1}
and Lemma~\ref{sh:pr}\ref{sh:pr:1}.
\ref{lem:HB:6}: Combine \ref{lem:HB:2}, \ref{lem:HB:1}
and Lemma~\ref{sh:pr}\ref{sh:pr:2}.
\ref{lem:HB:4}: Combine \ref{lem:HB:2}, \ref{lem:HB:1}
and Lemma~\ref{sh:pr}\ref{sh:pr:4}.
\ref{lem:HB:4}: Combine \ref{lem:HB:2}, \ref{lem:HB:1}
and Lemma~\ref{sh:pr}\ref{sh:pr:3}.
\end{proof}

\begin{thm}\label{main:conc}
Suppose that $A$ and $B$ satisfy
one of the following:
\begin{enumerate}
\item\label{abs:line}
$(\forall w\in \ri D\cap \ri R)\quad\ri (\ran A+\ran B)
\subseteq\ri\ran\bk{A+\inns{B}}.$
\item\label{main:c:1}
$A $ and $B$ are $3^*$ monotone.
\item\label{main:c:2}
$\dom B+\ri D\cap \ri R
\subseteq \dom A$ and $A$ is $3^*$ monotone.
\end{enumerate}
Then 
\begin{equation}\label{eq:nq}
\ran(\Id-T_{(A,B)})\simeq D\cap R. 
\end{equation}
Furthermore, the following implications hold:
\begin{equation}\label{e:add:t}
(\exists C\in \stb{A,B})~\dom C=X \text{~~and~~} C\text{~~is~~} 3^*
\text{~~monotone~~} \;\;\Longrightarrow\;\; \ran(\Id-T_{(A,B)})\simeq R,
\end{equation}
\begin{equation}\label{e:add:t:d}
(\exists C\in \stb{A,B})~\ran C=X \text{~~and~~} C\text{~~is~~} 3^*
\text{~~monotone~~} \;\;\Longrightarrow\;\; \ran(\Id-T_{(A,B)})\simeq D,
\end{equation}
and 
\begin{equation}\label{e:equality}
\ri(D\cap R)=D\cap R
\;\;\Longrightarrow\;\;
\ran(\Id-T_{(A,B)})= D\cap R. 
\end{equation}
\end{thm}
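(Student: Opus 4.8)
The plan is to reduce the description of $\ran(\Id-T_{(A,B)})$ to a pointwise membership question governed by the perturbed problems $Z_w$, and then to settle that question with the Brezis--Haraux theorem. The two structural inputs are Fact~\ref{chrac:1}, which says that $w\in\ran(\Id-T_{(A,B)})$ if and only if $w\in\ran(A+\inns[w]{B})$, and the a~priori inclusion $\ran(\Id-T_{(A,B)})\subseteq D\cap R$ from \eqref{easyinc}. Since $D\cap R$ is nearly convex with $\ri(D\cap R)=\ri D\cap\ri R$ by Lemma~\ref{lem:HB}, to obtain the near equality \eqref{eq:nq} it suffices to establish the sandwich
\begin{equation*}
\ri D\cap\ri R\;\subseteq\;\ran(\Id-T_{(A,B)})\;\subseteq\;D\cap R.
\end{equation*}
Indeed, once this holds $\ran(\Id-T_{(A,B)})$ lies between the convex set $\ri(D\cap R)$ and its closure $\overline{\ri(D\cap R)}=\overline{D\cap R}$ (Fact~\ref{lem:need}), hence is nearly convex with $\ran(\Id-T_{(A,B)})\simeq\ri(D\cap R)\simeq D\cap R$, again by Fact~\ref{lem:need}.

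First I would show that conditions \ref{main:c:1} and \ref{main:c:2} each imply the master condition \ref{abs:line}, collapsing the three cases to one. Fix $w\in\ri D\cap\ri R$. Because $\ri D=\ri\dom A-\ri\dom B$ (Fact~\ref{sum:nc}) and $\ri\dom\inns[w]{B}=\ri\dom B+w$, the membership $w\in\ri D$ forces $\ri\dom A\cap\ri\dom\inns[w]{B}\neq\fady$, so $A+\inns[w]{B}$ is maximally monotone by Fact~\ref{maxm_rock}. Noting that $\ran\inns[w]{B}=\ran B$ and that inner shifts preserve $3^*$ monotonicity (immediate from the definition), I would apply Fact~\ref{Br-H}: under \ref{main:c:1} both summands are $3^*$ monotone, while under \ref{main:c:2} the inclusion $\dom\inns[w]{B}=\dom B+w\subseteq\dom A$ together with $3^*$ monotonicity of $A$ supplies hypothesis~(ii) of Fact~\ref{Br-H} (with the roles of the two operators interchanged). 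In either case \eqref{Br-H-X} gives $\ran(A+\inns[w]{B})\simeq\ran A+\ran B$, so $\ri\ran(A+\inns[w]{B})=\ri(\ran A+\ran B)$ and \ref{abs:line} holds.

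With \ref{abs:line} in force the left inclusion of the sandwich is immediate: for $w\in\ri D\cap\ri R$ we have $w\in\ri R=\ri(\ran A+\ran B)\subseteq\ri\ran(A+\inns[w]{B})\subseteq\ran(A+\inns[w]{B})$, and Fact~\ref{chrac:1} converts this to $w\in\ran(\Id-T_{(A,B)})$; this proves \eqref{eq:nq}. The implication \eqref{e:equality} then costs nothing, since the same sandwich shows $\ri(D\cap R)\subseteq\ran(\Id-T_{(A,B)})\subseteq D\cap R$, so the hypothesis $\ri(D\cap R)=D\cap R$ forces equality throughout. For \eqref{e:add:t}, observe that $\dom C=X$ for some $C\in\stb{A,B}$ forces $D=\dom A-\dom B=X$, hence $D\cap R=R$; and the Brezis--Haraux argument above again verifies \ref{abs:line} (using hypothesis~(ii) of Fact~\ref{Br-H}, with $A$ absorbing the full domain when $C=A$ and $\inns[w]{B}$ doing so when $C=B$), so \eqref{eq:nq} yields $\ran(\Id-T_{(A,B)})\simeq R$.

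Finally, \eqref{e:add:t:d} I would obtain by Attouch--Th\'era self-duality rather than a fresh computation. A direct calculation gives $D_{(A^{-1},B^{-\ovee})}=\ran A+\ran B=R$ and $R_{(A^{-1},B^{-\ovee})}=\dom A-\dom B=D$, while $3^*$ monotonicity is preserved under both inversion and the $\ovee$ operation; thus $\ran C=X$ with $C$ $3^*$ monotone translates into its dual counterpart having full domain and being $3^*$ monotone. Applying \eqref{e:add:t} to the dual pair $(A^{-1},B^{-\ovee})$ and invoking $T_{(A^{-1},B^{-\ovee})}=T_{(A,B)}$ from Fact~\ref{s:dual} then delivers $\ran(\Id-T_{(A,B)})\simeq D$. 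I expect the main obstacle to be the bookkeeping in the reduction step: guaranteeing that the shifted sum $A+\inns[w]{B}$ is maximally monotone for \emph{every} $w\in\ri D\cap\ri R$ (so that Fact~\ref{Br-H} even applies) and checking that shifts, inversion, and the $\ovee$ operation each preserve $3^*$ monotonicity. These are precisely the points where the constraint qualification $w\in\ri D$ and the structure of $D$ and $R$ are used; everything else reduces to the near-equality calculus supplied by Lemma~\ref{lem:HB} and Fact~\ref{lem:need}.
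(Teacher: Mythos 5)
Your proposal is correct and follows essentially the same path as the paper's proof: the reduction of conditions \ref{main:c:1} and \ref{main:c:2} to the master condition \ref{abs:line} via the Brezis--Haraux theorem applied to $A$ and $\inns{B}$, the use of Fact~\ref{chrac:1} together with \eqref{easyinc} to trap $\ran(\Id-T_{(A,B)})$ between $\ri D\cap\ri R$ and $D\cap R$, the case analysis for \eqref{e:add:t}, and the self-duality argument (Fact~\ref{s:dual}) for \eqref{e:add:t:d} all coincide with the paper's argument. The only difference is cosmetic: where you deduce near convexity of $\ran(\Id-T_{(A,B)})$ directly from the sandwich $\ri(D\cap R)\subseteq\ran(\Id-T_{(A,B)})\subseteq\overline{D\cap R}$ via Definition~\ref{def:n:con} and Fact~\ref{lem:need}, the paper instead notes that $\Id-T_{(A,B)}$ is maximally monotone (being the displacement of a nonexpansive map) and invokes Fact~\ref{D-R-nc} and Fact~\ref{ri:part} --- a marginally heavier but equivalent step.
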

\begin{proof}
First we show that
 \begin{equation}\label{maxmono}
 (\forall w\in \ri D)\quad
 A+\inns{B} \quad \text{is maximally monotone}.
 \end{equation}
Notice that $(\forall w\in X)\dom\inns{B}
=\dom B+w$. 
Let $w\in \ri D=\ri (\dom A-\dom B)$.
Then $\ri \dom A\cap \ri \dom \inns{B}\neq \fady$. 
Using Fact~\ref{maxm_rock}, we conclude 
that  $A+\inns{B}$ is maximally monotone,
which proves \eqref{maxmono}.
Now, suppose that \ref{abs:line} holds.
Then $(\forall w\in D\cap R)$  
$w\in \ri \ran (A+\inns{B})\subseteq \ran (A+\inns{B})$.
Combining with Fact~\ref{chrac:1} we
conclude that $(\forall w\in \ri D\cap \ri R)$
$w\in \ran (\Id-T_{(A,B)})$. Hence 
 \begin{equation}
\overline{\ri D\cap \ri R}\subseteq \overline{\ran} (\Id-T_{(A,B)}).
\end{equation}
It follows from Lemma~\ref{lem:HB}\ref{lem:HB:4}
that
$\overline{\ri D\cap \ri R}=\overline{D\cap R}$.
Altogether,  
\begin{equation}
\overline{D\cap R}\subseteq \overline{\ran}(\Id-T_{(A,B)}).  
\end{equation}
It follows from \eqref{easyinc} that
$ \overline{\ran}(\Id-T_{(A,B)})\subseteq \overline{D\cap R}$.
Therefore,
\begin{equation}\label{cl:part}
\overline{D\cap R}= \overline{\ran}(\Id-T_{(A,B)}).  
\end{equation}
Since $T_{(A,B)}$ is firmly nonexpansive, 
hence nonexpansive,
it follows from {\rm\cite[Example~20.26]{BC2011}}
that $\Id-T_{(A,B)}$ is maximally monotone, 
and therefore $\ran(\Id-T_{(A,B)})$ 
is nearly convex by Fact~\ref{D-R-nc}. 
Using Lemma~\ref{lem:HB}\ref{lem:HB:1}\&\ref{lem:HB:3}  
we know that $\ri D\cap \ri R\neq \fady$
and
 $D\cap R$ is nearly convex.
Therefore, using \eqref{cl:part} and Fact~\ref{ri:part}  
applied to the nearly convex sets $D\cap R$
 and $\ran(\Id-T_{(A,B)})$, 
we get  $\ran(\Id-T_{(A,B)})\simeq D\cap R$.  
Now we show 
that each of the conditions
\ref{main:c:1} and \ref{main:c:2}
imply \ref{abs:line}.
Let $w\in \ri D\cap \ri R$, and 
notice that \ref{main:c:1}
implies that
 $\inns{B}$ is $3^*$ monotone,
 whereas \ref{main:c:2} 
 implies that
 $\dom \inns{B}=\dom B+w\subseteq \dom A$.
 Using \eqref{maxmono} and 
 Fact~\ref{Br-H} applied to
 $A$ and $\inns{B}$
we have $(\forall w\in \ri D\cap \ri R)$
\begin{equation}
 w\in \ri R
 = \ri (\ran A+\ran B)
      =\ri (\ran A +\ran \inns{B})
  =\ri \ran (A+\inns{B}).
 \end{equation}
 That is, \ref{abs:line} holds, and consequently  
 \eqref{eq:nq} holds.
 
 We now turn to the implication \eqref{e:add:t}. 
 Observe first that $D=X$. 
If $A$ is $3^*$ monotone and $\dom A=X$, then
clearly \ref{main:c:2} holds.
Thus, it remains to consider the case when
$B$ is $3^*$ monotone and $\dom B=X$.
Then $\inns{B}$ is $3^*$ monotone and
$\dom A\subseteq X = \dom \inns{B}$.
As before, we obtain 
 $w\in \ri R = \ri (\ran A+\ran B)
      =\ri (\ran A +\ran \inns{B})
  =\ri \ran (A+\inns{B})$.
Hence \ref{abs:line} holds, which completes
the proof of \eqref{e:add:t}. 
To prove the implication \eqref{e:add:t:d}, 
first notice that 
$(\exists C\in \stb{A,B})~\ran C=X$ and $C$ is $3^*
$ monotone 
$\iff(\exists C\in \stb{A^{-1},B^{-\ovee}})~\dom C=X$ and $C$ is $ 3^*$ monotone. 
Therefore using Fact~\ref{s:dual} and \eqref{e:add:t}
applied to the operators $A^{-1}$ and $B^{-\ovee}$
$(\exists C\in \stb{A,B})~\ran C=X \text{~~and~~} C\text{~~is~~} 3^*
\text{~~monotone~~} \RA \ran (\Id-T_{(A,B)}) =
\ran (\Id-T_{(A^{-1},B^{-\ovee}})=R_{(A^{-1},B^{-\ovee})}=
\ran A^{-1}+\ran B^{-\ovee}=\dom A-\dom B=D$, which proves \eqref{e:add:t:d}.
 
 Now suppose that $\ri(D\cap R)=D\cap R$.
 It follows from \eqref{eq:nq}
and \eqref{easyinc}
that
\begin{equation}
D\cap R=\ri(D\cap R)
=\ri\ran (\Id-T_{(A,B)})\subseteq 
\ran (\Id-T_{(A,B)})\subseteq D\cap R.
\end{equation}
Hence all the inclusions become equalities, 
which proves \eqref{e:equality}. 
\end{proof}

\begin{cor}[range of the Douglas--Rachford operator]\label{ran:T}
Suppose that $A$ and $B$ satisfy
one of the following:
\begin{enumerate}
\item\label{abs:line:d}
$(\forall w\in \ri D_{(A,B^{-1})}\cap \ri R_{(A,B^{-1})})\quad\ri (\ran A+\dom B)
\subseteq\ri\ran\bk{A+\inns{B^{-1}}}.$
\item\label{main:d:1}
$A $ and $B$ are $3^*$ monotone.
\item\label{main:d:2}
$\ran B+\ri D_{(A,B^{-1})}\cap \ri R_{(A,B^{-1})}
\subseteq \dom A$ and $A$ is $3^*$ monotone.
\end{enumerate}
Then
\begin{equation}
\ran T_{(A,B)}\simeq (\dom A-\ran B)\cap(\ran A+\dom B).
\end{equation}
Furthermore, the following implications hold:
\begin{equation}
(\exists C\in \{A,B^{-1}\})\quad  \dom C=X\,\,\text{and $C$ is $3^*$
monotone}
\;\;\Longrightarrow\;\;
\ran T_{(A,B)}\simeq\ran A+\dom B
\end{equation}
and 
\begin{multline}
\label{e:T:equality}
\ri\big(D_{(A,B^{-1})}\cap R_{(A,B^{-1})}\big) = 
D_{(A,B^{-1})}\cap R_{(A,B^{-1})}\\
\;\;\Longrightarrow\;\;
\ran T_{(A,B)}=(\dom A-\ran B)\cap(\ran A+\dom B).
\end{multline}
\end{cor}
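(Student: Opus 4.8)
The plan is to deduce the entire corollary from Theorem~\ref{main:conc} by applying that theorem to the modified pair $(A,B^{-1})$ and then transporting the conclusion through Fact~\ref{T:vs:Id:T}. Since $B$ is maximally monotone, so is $B^{-1}$, so the standing hypotheses of the section remain in force for $(A,B^{-1})$ and Theorem~\ref{main:conc} applies to it. Taking ranges in the identity $T_{(A,B)}=\Id-T_{(A,B^{-1})}$ of Fact~\ref{T:vs:Id:T} gives
\begin{equation*}
\ran T_{(A,B)}=\ran\bk{\Id-T_{(A,B^{-1})}},
\end{equation*}
so it suffices to identify the right-hand side with the asserted sets.

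First I would do the bookkeeping for the domain and range data of $(A,B^{-1})$. Reading \eqref{def:D:R} for this pair and using $\dom B^{-1}=\ran B$ and $\ran B^{-1}=\dom B$ gives $D_{(A,B^{-1})}=\dom A-\ran B$ and $R_{(A,B^{-1})}=\ran A+\dom B$. Consequently the near-equality \eqref{eq:nq}, read for $(A,B^{-1})$, becomes
\begin{equation*}
\ran\bk{\Id-T_{(A,B^{-1})}}\simeq(\dom A-\ran B)\cap(\ran A+\dom B),
\end{equation*}
which, combined with the displayed range identity, is exactly the claimed near-equality for $\ran T_{(A,B)}$.

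It then remains to verify that the three hypotheses of the corollary coincide with the three hypotheses of Theorem~\ref{main:conc} read for $(A,B^{-1})$. Condition~\ref{abs:line:d} is literally condition~\ref{abs:line} for $(A,B^{-1})$, since $\ran B^{-1}=\dom B$ makes $\ri(\ran A+\dom B)=\ri(\ran A+\ran B^{-1})$; and condition~\ref{main:d:2} is literally condition~\ref{main:c:2} for $(A,B^{-1})$, since $\dom B^{-1}=\ran B$. The only step that needs a genuine (though tiny) argument is condition~\ref{main:d:1}: Theorem~\ref{main:conc}\ref{main:c:1} applied to $(A,B^{-1})$ requires $A$ and $B^{-1}$ to be $3^*$ monotone, whereas we are given that $A$ and $B$ are. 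So the main point to settle is that $3^*$ monotonicity is invariant under inversion, i.e.\ $B$ is $3^*$ monotone if and only if $B^{-1}$ is. This is immediate from the definition: the graph-reversal bijection $(s,t)\in\gra B^{-1}\iff(t,s)\in\gra B$ together with $\dom B^{-1}=\ran B$, $\ran B^{-1}=\dom B$, and the symmetry $\innp{p-s,q-t}=\innp{q-t,p-s}$ identify the defining infimum condition for $B^{-1}$ with that for $B$ after interchanging the roles of the domain and range points. I expect no real obstacle beyond this symmetry check.

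Finally, the two ``furthermore'' implications follow in the same manner. Applying \eqref{e:add:t} of Theorem~\ref{main:conc} to $(A,B^{-1})$ shows that the existence of $C\in\stb{A,B^{-1}}$ with $\dom C=X$ and $C$ being $3^*$ monotone forces $\ran(\Id-T_{(A,B^{-1})})\simeq R_{(A,B^{-1})}=\ran A+\dom B$, which transports to $\ran T_{(A,B)}\simeq\ran A+\dom B$. Likewise, applying \eqref{e:equality} to $(A,B^{-1})$ yields \eqref{e:T:equality} once one replaces $\ran(\Id-T_{(A,B^{-1})})$ by $\ran T_{(A,B)}$ and $D_{(A,B^{-1})}\cap R_{(A,B^{-1})}$ by $(\dom A-\ran B)\cap(\ran A+\dom B)$.
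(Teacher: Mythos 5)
Your proof is correct and takes essentially the same route as the paper, whose entire argument is to invoke Fact~\ref{T:vs:Id:T} and apply Theorem~\ref{main:conc} to the pair $(A,B^{-1})$. Your explicit check that $3^*$ monotonicity is invariant under inversion (so that hypothesis~\ref{main:d:1} really does yield hypothesis~\ref{main:c:1} for $(A,B^{-1})$) is a detail the paper leaves implicit, and your verification of it is sound.
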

\begin{proof}
Using Fact~\ref{T:vs:Id:T}, 
we know that $T_{(A,B)}=\Id-T_{(A,B^{-1})}$.
The result thus follows by applying
Theorem~\ref{main:conc} to $(A,B^{-1})$.
\end{proof}

The assumptions in Theorem~\ref{main:conc}
are critical. Example~\ref{ex:1} shows that 
when neither $A$ nor $B$ is $3^*$ monotone,
the conclusion of the theorem fails.
Now we show that the conclusion 
of Theorem~\ref{main:conc}
fails even if one of the operators is a subdifferential
operator. 

\begin{ex}\label{ex:2}
Suppose that $X=\RR^2$,
set $C=\RR\times \stb{0}$, and
suppose that 
\begin{equation}
A=\begin{pmatrix}
0&-1\\
1&0
\end{pmatrix}
\quad\text{and}\quad
B=N_C.
\end{equation}
Then $\Id-T_{(A,B)}=J_A-\PR_CR_A$.
Notice that
$\PR_C:(x,y)\mapsto(x,0)$,
$J_A:(x,y)\mapsto \tfrac{1}{2}(x+y,-x+y)$ and 
consequently $R_A:(x,y)\mapsto(y,-x)$. 
Hence
\begin{equation}
\ran(\Id-T_{(A,B)})=\RR\cdot(1,-1)\subsetneqq \RR^2=
(\dom A -\dom B)\cap (\ran B +\ran A).
\end{equation}
\end{ex}

\begin{cor}\label{aff:case}
Suppose that $A$ and $B$ satisfy
one of the following:
\begin{enumerate}
\item
$(\forall w\in \ri D\cap \ri R)\quad\ri (\ran A+\ran B)
\subseteq\ri\ran\bk{A+\inns{B}}.$
\item
$A $ and $B$ are $3^*$ monotone.
\item
$\dom B+\ri D\cap \ri R
\subseteq \dom A$ and $A$ is $3^*$ monotone.
\item
$(\exists C\in \stb{A,B})$ $ \dom C=X$ and $C$ is $3^*$ monotone.
\end{enumerate}
Furthermore, suppose that $D$ and $R$ are affine subspaces.
Then $\ran(\Id-T_{(A,B)})= D\cap R$. 
\end{cor}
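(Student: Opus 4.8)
The plan is to reduce the entire statement to the final implication \eqref{e:equality} of Theorem~\ref{main:conc}, the only new ingredient being that the relative interior of an affine subspace is the subspace itself. First I would check that each of the four listed hypotheses already produces the near equality $\ran(\Id-T_{(A,B)})\simeq D\cap R$. Conditions (i), (ii), and (iii) are verbatim the hypotheses \ref{abs:line}, \ref{main:c:1}, and \ref{main:c:2} of Theorem~\ref{main:conc}, so in those three cases the near equality is exactly the conclusion \eqref{eq:nq}. For condition (iv) I would invoke the implication \eqref{e:add:t}: its proof shows that the existence of $C\in\stb{A,B}$ with $\dom C=X$ and $C$ being $3^*$ monotone forces $D=X$ and verifies \ref{abs:line}, so $D\cap R=R$ and the near equality $\ran(\Id-T_{(A,B)})\simeq R=D\cap R$ holds here too. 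Thus, under any of the four hypotheses, $\ran(\Id-T_{(A,B)})\simeq D\cap R$.

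The heart of the argument is the elementary observation that a nonempty affine subspace coincides with its own relative interior. Since $D$ and $R$ are assumed to be affine subspaces, their intersection $S:=D\cap R$ is again an affine subspace; it is nonempty because Lemma~\ref{lem:HB}\ref{lem:HB:1} gives $\ri D\cap\ri R\neq\fady$, while for affine subspaces $\ri D=D$ and $\ri R=R$, so $D\cap R\neq\fady$. Because $S$ is a closed affine subspace we have $\overline{\aff}\,S=S$, whence every closed ball $\BB{s}{r}$ centred at a point $s\in S$ satisfies $\BB{s}{r}\cap\overline{\aff}\,S=\BB{s}{r}\cap S\subseteq S$; by the definition of relative interior recorded in the introduction this yields $\ri(D\cap R)=D\cap R$.

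With this identity in hand, the hypothesis of the implication \eqref{e:equality} of Theorem~\ref{main:conc} is met, and that implication upgrades the near equality to the genuine equality $\ran(\Id-T_{(A,B)})=D\cap R$, as desired. I do not expect a serious obstacle here: the computation is a short chaining of results already established. The only points demanding a moment's care are confirming that hypothesis (iv) feeds into the near-equality conclusion through \eqref{e:add:t} (rather than directly through Theorem~\ref{main:conc}'s enumerated hypotheses), and verifying the nonemptiness of $D\cap R$ so that the affine-subspace relative-interior identity genuinely applies; both follow cleanly from Lemma~\ref{lem:HB}\ref{lem:HB:1} and the affine-subspace structure.
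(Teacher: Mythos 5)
Your proposal is correct and takes essentially the same route as the paper: establish $\ri(D\cap R)=D\cap R$ from the affine-subspace hypothesis (using Lemma~\ref{lem:HB}\ref{lem:HB:1} for nonemptiness and noting that condition (iv) reduces to the hypotheses of Theorem~\ref{main:conc} via the proof of \eqref{e:add:t}), and then invoke the implication \eqref{e:equality}. The only cosmetic difference is that the paper gets $\ri(D\cap R)=\ri D\cap\ri R=D\cap R$ from Lemma~\ref{lem:HB}\ref{lem:HB:6}, whereas you argue directly that the nonempty intersection of affine subspaces is itself an affine subspace and hence equals its own relative interior.
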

\begin{proof}
Since $\ri D=D$ and $\ri R=R$,
Lemma~\ref{lem:HB}\ref{lem:HB:6} yields
$D\cap R=\ri D\cap \ri R=\ri(D\cap R)$.
Now apply
\eqref{e:equality}.
\end{proof}

\begin{cor}
Suppose that
$X=\RR$. Then $\ran (\Id-T_{(A,B)})\simeq D\cap R$.
\end{cor}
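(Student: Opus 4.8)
The plan is to reduce the one-dimensional case to one of the $3^*$-monotone hypotheses already handled in Theorem~\ref{main:conc}, exploiting the fact that on the real line every maximally monotone operator is essentially a subdifferential. Concretely, when $X=\RR$, a maximally monotone operator $A\colon\RR\rras\RR$ is a (possibly multivalued) nondecreasing relation, and by the Rockafellar characterization of cyclically monotone operators on $\RR$ together with the fact that monotonicity and cyclic monotonicity coincide in dimension one, we have $A=\pt f$ for some proper lower semicontinuous convex $f\colon\RR\to\RX$; likewise $B=\pt g$. Then Fact~\ref{subd:star} gives that both $A$ and $B$ are $3^*$ monotone, so hypothesis~\ref{main:c:1} of Theorem~\ref{main:conc} is satisfied.

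First I would record the structural fact that on $\RR$ maximal monotonicity implies the operator is a subdifferential of a convex function. The cleanest route is to invoke that a monotone operator on $\RR$ has a graph that is a nondecreasing curve, and that such graphs are exactly the subdifferential graphs of one-dimensional convex functions; one can cite \cite{Rock70} for the integration of a monotone relation on the line. Once $A=\pt f$ and $B=\pt g$ are established, the second step is a direct appeal to Fact~\ref{subd:star}, which yields that $\pt f$ and $\pt g$ are $3^*$ monotone.

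With both operators $3^*$ monotone, the third and final step is simply to invoke condition~\ref{main:c:1} of Theorem~\ref{main:conc}, whose conclusion is precisely
\begin{equation*}
\ran(\Id-T_{(A,B)})\simeq D\cap R,
\end{equation*}
which is the desired statement.

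The main obstacle is really the first step: justifying that every maximally monotone $A$ on $\RR$ is a subdifferential operator. This is standard but must be stated carefully, since it is specific to the one-dimensional setting (it fails in higher dimensions, as Example~\ref{ex:1} and Example~\ref{ex:2} dramatize, where linear rotations are monotone but not $3^*$ monotone). An alternative that sidesteps the subdifferential detour would be to verify the $3^*$ monotonicity of a one-dimensional maximally monotone operator directly from the definition: for $x\in\dom A$ and $v\in\ran A$, one shows that the infimum $\inf_{(z,w)\in\gra A}(x-z)(v-w)$ is bounded below by exploiting that $(z,w)\mapsto(x-z)(v-w)$ cannot go to $-\infty$ along a nondecreasing graph, since the sign of $x-z$ and the sign of $v-w$ are controlled monotonically. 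Either route lands on hypothesis~\ref{main:c:1}, after which the corollary is immediate.
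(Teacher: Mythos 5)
Your proof is correct and follows essentially the same route as the paper: the paper likewise observes that every maximally monotone operator on $\RR$ is a subdifferential of a proper lower semicontinuous convex function (citing \cite[Corollary~22.19]{BC2011}), invokes Fact~\ref{subd:star} to obtain $3^*$ monotonicity of both $A$ and $B$, and then applies Theorem~\ref{main:conc}\ref{main:c:1}. Your alternative sketch of verifying $3^*$ monotonicity directly from the sign structure of one-dimensional monotone graphs is a reasonable bypass, but it is not needed once the subdifferential characterization is cited.
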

\begin{proof}
Indeed, it follows from e.g.\ 
\cite[Corollary~22.19]{BC2011}
 and Fact~\ref{subd:star} that $A$
 and $B$ are $3^*$ monotone. Now 
 apply Theorem~\ref{main:conc}\ref{main:c:1}.
\end{proof}

We now construct an example where 
$\ran(\Id-T_{(A,B)}) $ 
properly lies between
$\ri(D\cap R)$ and $\overline{D\cap R}$.
This illustrate that 
Theorem~\ref{main:conc} 
is optimal in the sense that
near equality cannot be replaced by actual equality.

\begin{ex}\label{cn:ex1}
Suppose that $\dim X\geq 2$,
let $u$ and $v$ be in $X$ with $u\neq v$, 
let $r$ and $s$ be in $\RPP$,
set $U=\BB{u}{r}$ and
$V=\BB{v}{s}$, and suppose that
$A=N_U$ and $B=N_V$. 
Then $D\cap R = \BB{u-v}{r+s}$ and 
\begin{equation}
\ran(\Id-T_{(A,B)})=\intr \BB{u-v}{r+s}
\cup\ds\stb{\bk{1-
\tfrac{r+s}{\norm{u-v}}}(u-v),\bk{1+\tfrac{r+s}{\norm{u-v}}}(u-v)};
\end{equation}
consequently,
\begin{equation}
\ri (D\cap R)\subsetneqq  \ran(\Id-T_{(A,B)})
\subsetneqq \overline{D\cap R}.
\end{equation}
Moreover,
\begin{equation}
v_{(A,B)}=\max\big\{ (r+s) - \norm{v-u} , 0\big\}\cdot
\frac{v-u}{ \norm{v-u}}. 
\end{equation}
\end{ex}
\begin{proof}
It follows from Fact \ref{subd:star}
that $A$ and $B$ are $3^*$ monotone.
Using e.g.\ \cite[Corollary~21.21]{BC2011}, 
we have 
$\ran A=\ran B=X$, hence $R=X$ and $D\cap R=D=U-V$.
First notice that
\begin{equation}\label{U-V}
D\cap R=D=U-V=\BB{u-v}{r+s}.
\end{equation}
We claim that
\begin{equation}\label{sing}
(\forall w\in D\setminus \ri D)\quad 
U\cap(V+w)\quad \text{is a singleton}.
\end{equation}
Since $D=U-V$, we have  
$(\forall w\in D) $ $
U\cap(V+w)\neq \fady$.
Now let $w\in D\setminus \ri D$
and assume to the contrary that 
$\stb{y,z}\subseteq U\cap(V+w)$
with $y\neq z$. 
Then 
$\{y-w,z-w\} \subseteq V$,
and
$(\forall \lam \in \left ]0,1\right [)$
\begin{equation}
\lam y+(1-\lam )z\in\intr U\quad \text {and }
\quad \lam y+(1-\lam )z-w\in \intr V.
\end{equation}
It follows from Fact~\ref{sum:nc}
and the above inclusions that 
$w\in  \intr U-\intr V=\ri U-\ri V=\ri D$, 
which is absurd.
Therefore \eqref{sing} holds.
Now, let $w\in D\setminus \ri D$
and notice that $V+w=\BB{v+w}{r}$.
Using \eqref{sing} we have 
$U\cap(V+w)=\dom(A+\inns{B})$ 
is a singleton. 
Consequently, $\ran(A+\inns{B})$ 
is the line passing through the origin
parallel to the line 
passing through 
$u$ and $v+w$, and 
by Fact~\ref{chrac:1}, we have
$w\in \ran (\Id-T_{(A,B)})$
$\iff w\in\ran(A+\inns{B})$ 
$\iff w=\lam (u-v-w)$ for some 
$\lam \in \RR\setminus\stb{-1}$
$\iff w=\tfrac{\lam }{1+\lam}(u-v)$ 
with $\lam \in \RR\setminus\stb{-1}$
(since $u\neq v$),
or equivalently,
\begin{equation}\label{w:u:v}
w=\alpha(u-v),\quad \text{where~} \alpha\in \RR\setminus\stb{1}.
\end{equation}
Finally notice that $w$ is on the boundary of 
$U-V$. Therefore, using \eqref{U-V} 
and \eqref{w:u:v} we must have
$\norm{w-(u-v)}=r+s$ 
$\iff \abs{\alpha-1}\norm{u-v}=r+s$
$\iff \alpha=1\pm \frac{r+s}{\norm{u-v}}$,
which means that only two points on the boundary 
of $D$ are included in $\ran (\Id-T_{(A,B)})$.
Moreover, if $\norm{u-v}> r+s$,
then $0\in \intr \BB{u-v}{s+r}$,
hence $v_{(A,B)}=0$.
Else if $\norm{u-v}\le r+s$,
using \cite[Proposition~28.10]{BC2011}
we get
$
v_{(A,B)}=
\bk{1- \tfrac{r+s}{\norm{u-v}}}(u-v)
$,
which completes the proof. 
\end{proof}

\section{Applications}

\label{S:app}
\subsection{On the infimal displacement vector $v_{(A,B)}$}

In this section, we focus on $v_{(A,B)}$.

\begin{proposition}
\label{p:kaos}
Suppose that $A$ and $B$ satisfy
one of the following:
\begin{enumerate}
\item 
$(\forall w\in \ri D\cap \ri R)\quad\ri (\ran A+\ran B)
\subseteq\ri\ran\bk{A+\inns{B}}.$
\item
$A $ and $B$ are $3^*$ monotone.
\item 
$\dom B+\ri D\cap \ri R
\subseteq \dom A$ and $A$ is $3^*$ monotone.
\item 
$(\exists C\in \stb{A,B})$ $ \dom C=X$ and $C$ is $3^*$ monotone.
\end{enumerate}
Then 
$\overline{\ran}(\Id-T_{(A,B)})
=\overline{D\cap R}=\overline{D}\cap \overline{R}$ and 
$v_{(A,B)}=\PR _{\overline{D}\cap\overline{R}}(0)$.
\end{proposition}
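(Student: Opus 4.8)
The plan is to reduce Proposition~\ref{p:kaos} to Theorem~\ref{main:conc} together with the structural facts already assembled in Lemma~\ref{lem:HB}. First I observe that the four hypotheses listed in Proposition~\ref{p:kaos} are exactly conditions \ref{abs:line}, \ref{main:c:1}, \ref{main:c:2} of Theorem~\ref{main:conc} plus the extra condition $(\exists C\in\stb{A,B})~\dom C=X$ and $C$ is $3^*$ monotone. The first three directly give \eqref{eq:nq}, namely $\ran(\Id-T_{(A,B)})\simeq D\cap R$; for the fourth hypothesis, the implication \eqref{e:add:t} yields $\ran(\Id-T_{(A,B)})\simeq R$, but since $\dom C=X$ forces $D=X$ (as in the proof of \eqref{e:add:t}), we have $D\cap R=R$, so again $\ran(\Id-T_{(A,B)})\simeq D\cap R$ holds. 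Thus in all four cases I may assume the near-equality \eqref{eq:nq}.

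From near-equality I extract the closure statement. By Definition~\ref{def:n:eq}, $\ran(\Id-T_{(A,B)})\simeq D\cap R$ gives in particular $\overline{\ran}(\Id-T_{(A,B)})=\overline{D\cap R}$. It then remains to identify $\overline{D\cap R}$ with $\overline{D}\cap\overline{R}$. This is precisely Lemma~\ref{lem:HB}\ref{lem:HB:5}, which states $\overline{D\cap R}=\overline{D}\cap\overline{R}$, and which is available unconditionally since $D$ and $R$ are nearly convex with $\ri D\cap\ri R\neq\fady$ by Lemma~\ref{lem:HB}\ref{lem:HB:2}\&\ref{lem:HB:1}. Chaining these equalities gives
\begin{equation*}
\overline{\ran}(\Id-T_{(A,B)})=\overline{D\cap R}=\overline{D}\cap\overline{R},
\end{equation*}
which is the first assertion.

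Finally I translate the closure identity into the formula for the infimal displacement vector. By Definition~\ref{def:n:p}, $v_{(A,B)}=\PR_{\overline{\ran}(\Id-T_{(A,B)})}(0)$, and the projector $\PR_C$ is defined only for a nonempty closed convex set $C$. Here $\overline{\ran}(\Id-T_{(A,B)})=\overline{D}\cap\overline{R}$ is closed by construction; its convexity and nonemptiness follow because $D\cap R$ is nearly convex (Lemma~\ref{lem:HB}\ref{lem:HB:3}), so by Fact~\ref{lem:need} its closure $\overline{D\cap R}=\overline{D}\cap\overline{R}$ is convex and nonempty. Substituting the set identity into the definition of $v_{(A,B)}$ then gives $v_{(A,B)}=\PR_{\overline{D}\cap\overline{R}}(0)$, completing the proof. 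I do not anticipate a serious obstacle: the entire argument is a bookkeeping assembly of Theorem~\ref{main:conc} and Lemma~\ref{lem:HB}, and the only point requiring slight care is the fourth hypothesis, where one must notice that $\dom C=X$ collapses $D\cap R$ to $R$ so that the conclusion $\simeq R$ of \eqref{e:add:t} still matches $\simeq D\cap R$.
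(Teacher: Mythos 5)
Your proof is correct and follows essentially the same route as the paper, whose entire proof reads ``Combine Theorem~\ref{main:conc}, Lemma~\ref{lem:HB}\ref{lem:HB:5}, and \eqref{defn:v}''; you have simply spelled out the bookkeeping, including the (correct) observation that under hypothesis (iv) one has $D=X$, so the conclusion $\ran(\Id-T_{(A,B)})\simeq R$ of \eqref{e:add:t} coincides with $\ran(\Id-T_{(A,B)})\simeq D\cap R$.
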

\begin{proof}
Combine Theorem~\ref{main:conc},
Lemma~\ref{lem:HB}\ref{lem:HB:5}, and 
\eqref{defn:v}.
\end{proof}

Using the symmetric hypotheses of Theorem~\ref{main:conc}, 
we obtain the following result:

\begin{lem}\label{new:f:v}
Suppose that both $A$ and $B$ are $3^*$ monotone, 
or that $(\exists C\in \{A,B\})$ such that $ \dom C=X$ 
and $C$ is $3^*$ monotone.
Then the following hold:
\begin{enumerate}
\item\label{new:f:v:2}
 If $D$ is a linear subspace of $X$, then  
 $\ran(\Id-T_{(A,B)})\simeq \ran(\Id-T_{(B,A)})$
 and 
$v_{(A,B)}=v_{(B,A)}$.
  \item\label{new:f:v:3}
   If $R$ is a linear subspace of $X$, 
 then
$\ran(\Id-T_{(A,B)})\simeq -\ran(\Id-T_{(B,A)})$
and $v_{(A,B)}= - v_{(B,A)}$. 
\item\label{new:f:v:4}
If $\dom A=X$ or $\dom B=X$, 
then $\ran(\Id-T_{(A,B)})\simeq \ran(\Id-T_{(B,A)})\simeq R$,
and $v_{(A,B)}=v_{(B,A)}=\PR_{\overline{R}}(0)$.
\item\label{new:f:v:5}
If $\dom A$ or $\dom B$ is bounded, then
$\ran(\Id-T_{(A,B)})\simeq -\ran(\Id-T_{(B,A)})\simeq D$,
and 
 $v_{(A,B)}= - v_{(B,A)}=\PR_{\overline{D}}(0)$. 
\end{enumerate}
\end{lem}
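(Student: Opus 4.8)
The plan is to reduce all four parts to the single near equality supplied by Theorem~\ref{main:conc}, exploiting that the standing hypothesis is symmetric in $A$ and $B$. First I would record that the hypothesis (both $3^*$ monotone, or one operator with full domain and $3^*$ monotone) is exactly what Theorem~\ref{main:conc} requires, and that it holds verbatim for the reversed pair $(B,A)$ since $\{A,B\}=\{B,A\}$. Hence Theorem~\ref{main:conc} gives both $\ran(\Id-T_{(A,B)})\simeq D\cap R$ and $\ran(\Id-T_{(B,A)})\simeq D_{(B,A)}\cap R_{(B,A)}$. Since $D_{(B,A)}=\dom B-\dom A=-D$ and $R_{(B,A)}=\ran B+\ran A=R$, the latter reads $\ran(\Id-T_{(B,A)})\simeq(-D)\cap R$. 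I would also note two elementary facts used throughout: the reflection $x\mapsto -x$ preserves near equality (it commutes with $\ri$ and with closure), so $C\simeq E\iff -C\simeq -E$; and $\PR_{-S}(0)=-\PR_S(0)$ for every nonempty closed convex $S$. Finally, Proposition~\ref{p:kaos} already yields $v_{(A,B)}=\PR_{\overline D\cap\overline R}(0)$, and for $(B,A)$ it gives $v_{(B,A)}=\PR_{(-\overline D)\cap\overline R}(0)$.

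For \ref{new:f:v:2}, if $D$ is a linear subspace then $-D=D$, so $(-D)\cap R=D\cap R$ and therefore $\ran(\Id-T_{(B,A)})\simeq D\cap R\simeq\ran(\Id-T_{(A,B)})$; since $\overline{-D}=\overline D$, the two formulas for $v$ coincide, giving $v_{(A,B)}=v_{(B,A)}$. For \ref{new:f:v:3}, if $R$ is a linear subspace then $-R=R$, whence $-\big((-D)\cap R\big)=D\cap(-R)=D\cap R$; applying the reflection to $\ran(\Id-T_{(B,A)})\simeq(-D)\cap R$ yields $-\ran(\Id-T_{(B,A)})\simeq D\cap R\simeq\ran(\Id-T_{(A,B)})$, and the $v$-formulas together with $\PR_{-S}(0)=-\PR_S(0)$ give $v_{(A,B)}=-v_{(B,A)}$.

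The remaining two parts I would obtain by checking that their extra hypotheses force $D$ (resp.\ $R$) to be all of $X$, and then invoking \ref{new:f:v:2} (resp.\ \ref{new:f:v:3}). For \ref{new:f:v:4}, if $\dom A=X$ or $\dom B=X$ then $D=\dom A-\dom B=X$, a linear subspace, so \ref{new:f:v:2} applies; moreover $D\cap R=R$, giving $\ran(\Id-T_{(A,B)})\simeq\ran(\Id-T_{(B,A)})\simeq R$, and since $\overline D=X$ Proposition~\ref{p:kaos} reduces to $v_{(A,B)}=v_{(B,A)}=\PR_{\overline R}(0)$. For \ref{new:f:v:5}, the key input is the classical fact that a maximal monotone operator on a finite-dimensional space with bounded domain is surjective; thus if $\dom A$ (or $\dom B$) is bounded then $\ran A=X$ (resp.\ $\ran B=X$), whence $R=\ran A+\ran B=X$ is a linear subspace, and \ref{new:f:v:3} applies with $D\cap R=D$, giving $\ran(\Id-T_{(A,B)})\simeq -\ran(\Id-T_{(B,A)})\simeq D$ and $v_{(A,B)}=-v_{(B,A)}=\PR_{\overline D}(0)$.

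The routine parts are the set identities $-D=D$, $D\cap(-R)=D\cap R$, $D\cap R\in\{D,R\}$, together with the behavior of $\ri$, closure, and $\PR$ under $x\mapsto -x$. The only genuinely external ingredient, and the main point to pin down with a citation or a short regularization argument (solve $0\in Ax+\varepsilon x$ for the strongly monotone perturbation, use boundedness of $\dom A$ to keep the solutions bounded, and pass to a limit using closedness of $\gra A$), is the surjectivity of a maximal monotone operator with bounded domain invoked in \ref{new:f:v:5}.
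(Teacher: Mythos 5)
Your proposal is correct and follows essentially the same route as the paper's proof: both reduce everything to Theorem~\ref{main:conc} via the observation $\ran(\Id-T_{(B,A)})\simeq(-D)\cap R$, handle \ref{new:f:v:2} and \ref{new:f:v:3} through the symmetries $D=-D$ (resp.\ $R=-R$), and reduce \ref{new:f:v:4} and \ref{new:f:v:5} to these by showing $D=X$ (resp.\ $R=X$, using that a maximally monotone operator with bounded domain is surjective, which the paper simply cites as \cite[Corollary~21.21]{BC2011} rather than reproving by regularization). The only cosmetic difference is that you draw the displacement-vector identities from Proposition~\ref{p:kaos} in all parts, whereas the paper obtains them in \ref{new:f:v:2} and \ref{new:f:v:3} directly from the near equality of the ranges together with \eqref{defn:v}.
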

\begin{proof}
Observe first that 
\begin{equation}
\label{AB:BA}
\ran(\Id-T_{(B,A)})\simeq (-D)\cap R
\end{equation}
by \eqref{def:D:R}.
\ref{new:f:v:2}:
Since $D=-D$, 
Theorem~\ref{main:conc} and \eqref{AB:BA} yield
 $\ran(\Id-T_{(A,B)})\simeq \ran(\Id-T_{(B,A)})$,
 and the conclusion follows from 
 \eqref{defn:v}.
\ref{new:f:v:3}:
Let $u\in X$. Since $R=-R$, we
obtain the equivalences $u\in D\cap R$
$\iff -u\in -D$ and $-u\in R$
$\iff -u\in (-D)\cap R$
$\iff u\in -((-D)\cap R)$.
Hence, $D\cap R=- ((-D)\cap R)$.
Consequently, $\overline{D\cap R}=- \overline{((-D)\cap R)}$
and $\ri(D\cap R)=- \ri{((-D)\cap R)}$.
Applying Theorem \ref{main:conc},
in view of \eqref{AB:BA}, 
to the pair $(A,B) $ and the pair $(B,A)$,
we conclude
$\ran(\Id-T_{(A,B)})\simeq -\ran(\Id-T_{(B,A)})$.
Thus $\overline{\ran}(\Id-T_{(A,B)})= -\overline{\ran}(\Id-T_{(B,A)})$,
 and the result follows from \eqref{defn:v}.
\ref{new:f:v:4}: 
The hypothesis implies $D=X=\overline{X}=\overline{D}$.
Now combine with \ref{new:f:v:2} and Proposition~\ref{p:kaos}\ref{main:c:1}.
 \ref{new:f:v:5}: 
That either $\dom A$ or $\dom B$ is bounded, 
implies that $\ran A=X$ (respectively $\ran B=X$)
(see, e.g., \cite[Corollary~21.21]{BC2011}).
Hence
$R=X=\overline{X}=\overline{R}$.
Now combine with \ref{new:f:v:3} and
Proposition~\ref{p:kaos}\ref{main:c:1}. 
\end{proof}

\begin{example}
Suppose that $X=\RR$. 
It follows from Fact~\ref{eq:norm} that
 $v_{(A,B)}=\pm v_{(B,A)}$.
\end{example}

In \cite[Section~3]{Sicon2014}, we constructed examples where
\begin{equation}
\frac{\innp{v_{(A,B)},v_{(B,A)}}}{\norm{v_{(A,B)}}
\norm{v_{(B,A)}}}\in \stb{-1,0,1}.
\end{equation}
We now show that this quotient can take on any value in 
$\left[-1,1 \right]$.

\begin{example}[{angle between $v_{(A,B)}$ and $v_{(B,A)}$}]
Suppose that $S$ is a linear subspace 
of $X$ such that
$\stb{0}\subsetneqq S\subsetneqq X$. 
Let $\theta \in \RR$, let $u\in S$,
 and let $v\in S^\perp$ such that $\norm{u}=\norm{v}=1$.
Set
$a=\sin (\theta) v$,
 and set
$b=\cos (\theta) u$.
Suppose that $A=N_{S+a}$ and that $B=N_S+b$.
Then $D=\dom A-\dom B=S+a-S=S+a$,
and $R=\ran A+\ran B=S^{\perp}+S^{\perp}+b=S^{\perp}+b$.
Consequently, $-D=S-a$.
Clearly, $\overline{D\cap R}=D\cap R=\stb{b+a}$, 
whereas $\overline{(-D)\cap R}=(-D)\cap R
=\stb{b-a}$.
Therefore, $v_{(A,B)}=b+a$,
and $v_{(B,A)}=b-a$.
By Fact~\ref{eq:norm}  $\norm{v_{(A,B)}}=\norm{v_{(B,A)}}=1$.
Moreover, since $a\perp b$
\begin{align}
\scal{v_{(A,B)}}{v_{(B,A)}}=
\innp{b+a,b-a}=\norm{b}^2-\norm{a}^2
=\cos^2 (\theta)-\sin^2 (\theta)=\cos(2\theta).
\end{align}
\end{example}

\subsection{Subdifferential operators}

We now turn to subdifferential operators.

\begin{cor}
Let $f:X\to \RRR$ and $g:X\to \RRR$ be
proper lower semicontinuous convex
functions. Then the following hold:
\begin{enumerate}
\item\label{sub:diff:case}
$
\ran (\Id-T_{(\pt f,\pt g)})\simeq 
(\dom f-\dom g)\cap(\dom f^*+\dom g^*).
$
\item \label{sub:diff:T}
$
\ran T_{(\pt f,\pt g)}\simeq 
(\dom f-\dom g^*)\cap(\dom f^*+\dom g).
$
\end{enumerate}
\end{cor}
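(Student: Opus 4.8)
The plan is to derive both statements by specializing the main results to subdifferential operators, exploiting the fact that for a proper lower semicontinuous convex function $f$, the subdifferential $\pt f$ is $3^*$ monotone (Fact~\ref{subd:star}), together with the standard identifications $\dom \pt f \subseteq \dom f$ (with near equality of the domains) and $(\pt f)^{-1} = \pt f^*$, so that $\ran \pt f = \dom (\pt f)^{-1} = \dom \pt f^*$. First I would set $A = \pt f$ and $B = \pt g$ and simply invoke Theorem~\ref{main:conc}\ref{main:c:1}, since both $A$ and $B$ are $3^*$ monotone; this immediately yields
\begin{equation*}
\ran(\Id-T_{(\pt f,\pt g)}) \simeq D\cap R = (\dom \pt f - \dom \pt g)\cap(\ran \pt f + \ran \pt g).
\end{equation*}

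The next step is to replace the domains and ranges of the subdifferentials by the domains of $f,g,f^*,g^*$, up to near equality. Here I would use that $\dom \pt f$ is nearly convex with the same closure and relative interior as $\dom f$ (this is the classical fact that $\ri\dom\pt f = \ri\dom f$ and $\overline{\dom\pt f}=\overline{\dom f}$), hence $\dom \pt f \simeq \dom f$, and likewise $\ran\pt f = \dom\pt f^* \simeq \dom f^*$. Applying Fact~\ref{prox:part} (preservation of near equality under finite sums) to the differences and sums of these nearly convex sets gives $D \simeq \dom f - \dom g$ and $R \simeq \dom f^* + \dom g^*$. Finally, Corollary~\ref{eq:int}, applied to the nearly convex pairs $(D, \dom f-\dom g)$ and $(R, \dom f^*+\dom g^*)$ — whose relative interiors intersect by Lemma~\ref{lem:HB}\ref{lem:HB:1} — upgrades these two near equalities to a near equality of the intersections, establishing \ref{sub:diff:case}.

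For part \ref{sub:diff:T}, the plan is to apply the same argument to $\ran T$ rather than $\ran(\Id-T)$. The cleanest route is to invoke Corollary~\ref{ran:T}\ref{main:d:1} with $A=\pt f$, $B=\pt g$, again using that both are $3^*$ monotone, which yields
\begin{equation*}
\ran T_{(\pt f,\pt g)} \simeq (\dom\pt f - \ran\pt g)\cap(\ran\pt f + \dom\pt g).
\end{equation*}
Then I would translate via the identities $\dom\pt f \simeq \dom f$, $\ran\pt f \simeq \dom f^*$, $\dom\pt g \simeq \dom g$, and $\ran\pt g = \dom\pt g^* \simeq \dom g^*$, and once more use Fact~\ref{prox:part} and Corollary~\ref{eq:int} to conclude the near equality of the intersections.

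The only genuinely delicate point — the main obstacle — is justifying the domain/range identities $\dom\pt f \simeq \dom f$ and $\ran\pt f \simeq \dom f^*$ at the level of \emph{near} equality rather than mere closure equality; the containment $\dom\pt f\subseteq\dom f$ is elementary, but the matching relative interiors rely on the fact that $\ri\dom f\subseteq\dom\pt f$ for convex $f$, which must be cited. Everything else is bookkeeping with the near-equality calculus already developed in Section~\ref{S:eq:nc}, and the transfer from $\ran(\Id-T)$ to $\ran T$ is handled uniformly by the duality $T_{(A,B)}=\Id-T_{(A,B^{-1})}$ underlying Corollary~\ref{ran:T}.
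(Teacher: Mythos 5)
Your proposal is correct and follows essentially the same route as the paper: both parts are obtained by invoking Theorem~\ref{main:conc}\ref{main:c:1} (resp.\ Corollary~\ref{ran:T}) for the $3^*$ monotone operators $\pt f$ and $\pt g$, and then transferring $\dom\pt f\simeq\dom f$ and $\ran\pt f=\dom\pt f^*\simeq\dom f^*$ through the near-equality calculus via Fact~\ref{prox:part} and Corollary~\ref{eq:int}. The one point you flag as delicate is dispatched in the paper without any relative-interior argument: since $\dom f$ is convex and $\dom\pt f$ is nearly convex (Fact~\ref{D-R-nc}), the closure identity $\overline{\dom}\,f=\overline{\dom}\,\pt f$ together with Fact~\ref{ri:part} already yields $\dom\pt f\simeq\dom f$.
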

\begin{proof}
It is well-known that (see, e.g., \cite[Corollary~16.29]{BC2011})
$\overline{\dom}~f=\overline{\dom} ~\pt f$.
Since $f$ is convex, so is $\dom f$. 
Moreover, by Fact~\ref{D-R-nc}
$\dom \pt f$ is nearly convex.
Therefore, applying Fact~\ref{ri:part}  
to the sets ${\dom f}$ and ${\dom \pt f}$
we conclude that $\dom \pt f\simeq \dom f$.
Using \cite[Proposition~16.24]{BC2011}, and the previous 
conclusion applied to $f^*$, we have
$
\ran \pt f=\dom (\pt f)^{-1}=\dom \pt f^*\simeq\dom f^*
$. Altogether,
\begin{equation}\label{f:p:c:l}
\dom \pt f\simeq \dom f,\quad 
\ran \pt f\simeq \dom f^*.
\end{equation}
Applying Fact~\ref{prox:part} with $C_1=\dom f$,
$C_2=-\dom g$, $D_1=\dom\pt f$,
$D_2=-\dom \pt g$, we conclude that
\begin{equation}\label{D1}
\dom \pt f-\dom \pt g \simeq \dom f-\dom g.
\end{equation}
One shows similarly that
\begin{equation}\label{R1}
\ran \pt f+\ran \pt g\simeq \dom f^*+\dom g^*.
\end{equation}
It follows from 
 the maximal monotonicity of 
$\pt f$ and $\pt g $ and 
Lemma~\ref{lem:HB}\ref{lem:HB:1} that
$(\dom \pt f-\dom \pt g)
\cap (\ran \pt f+\ran \pt g)\neq\fady$.
Applying Corollary~\ref{eq:int} with 
$C_1:=\dom \pt f-\dom \pt g$,
$C_2:= \ran \pt f+\ran \pt g$,
$D_1:=\dom  f-\dom  g$,
and $D_2:= \dom f^*+\dom g^*$,
we conclude that
\begin{equation}\label{prox:case}
(\dom \pt f-\dom \pt g)
\cap (\ran \pt f+\ran \pt g)\simeq (\dom  f-\dom  g)
\cap (\dom f^*+\dom g^*).
\end{equation}
To complete the proof, notice that
by Fact~\ref{subd:star}
$\pt f$ and $\pt g$ are $3^*$ monotone 
operators, and by assumption 
$\pt f+\pt g$ is maximally monotone.
Therefore, by Theorem~\ref{main:conc},
we have 
\begin{equation}\label{prox:case:2}
\ran (\Id-T_{(\pt f,\pt g)})\simeq(\dom \pt f-\dom \pt g)
\cap (\ran \pt f+\ran \pt g).
\end{equation}
Combining \eqref{prox:case} and \eqref{prox:case:2}
we conclude that \ref{sub:diff:case}
holds true.
To prove \ref{sub:diff:T},
combine Corollary~\ref{ran:T}, \eqref{f:p:c:l}
and Corollary~\ref{eq:int}.
\end{proof}
\begin{cor}
Let $f:X\to \RRR$ be 
proper, convex, lower semicontinuous
and suppose that $V$ is a nonempty 
closed convex subset of $X$.
Suppose that $A=\pt f$ and
$B=N_V$. Then the following hold:
\begin{enumerate}
\item\label{prox:n:1}
$
T_{(A,B)}=J_{N_V}R_{\pt f}+\Id-J_{\pt f}
=\PR_V(2\prox_f-\Id)+\Id-\prox_f.
$
\item\label{prox:n:2}
$
\ran (\Id-T_{(A,B)})\simeq (\dom f-V)\cap (\dom f^*+(\rec V)^\ominus).
$
\item\label{prox:n:3}
$
\ran T_{(A,B)}\simeq (\dom f-(\rec V)^\ominus)\cap (\dom f^*+V).
$
\end{enumerate}
Consequently, if $V$ is a linear subspace we may add to this list
the following items:
\begin{enumerate}
\setcounter{enumi}{3}
\item\label{prox:n:4}
$
\ran (\Id-T_{(A,B)})\simeq (\dom f+V)\cap (\dom f^*+V^\perp).
$
\item\label{prox:n:5}
$
\ran T_{(A,B)}\simeq (\dom f+V^\perp)\cap (\dom f^*+V).
$
\end{enumerate}
\end{cor}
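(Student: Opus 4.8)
The plan is to specialize the two main corollaries already proved—namely Corollary~\ref{ran:T} and the subdifferential corollary (items \ref{sub:diff:case} and \ref{sub:diff:T})—to the pair $A=\pt f$ and $B=N_V$. The key observation is that $N_V$ is itself a subdifferential, $N_V=\pt\iota_V$, where $\iota_V$ is the indicator function of $V$; hence all the subdifferential formulae apply verbatim with $g=\iota_V$. Since $\iota_V$ is proper, lower semicontinuous, and convex (as $V$ is nonempty closed convex), we have $\dom\iota_V=V$ and, by Legendre--Fenchel duality, $\dom\iota_V^*=\dom\sigma_V$, the domain of the support function.

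First I would establish \ref{prox:n:1} by a direct computation from the definition \eqref{e:done}: writing $T_{(A,B)}=\Id-J_A+J_BR_A$ with $A=\pt f$ and $B=N_V$, I substitute the standard identifications $J_{\pt f}=\prox_f$ and $J_{N_V}=\PR_V$ (the latter because the resolvent of a normal cone operator is the metric projection). This gives $T_{(A,B)}=\PR_V R_{\pt f}+\Id-\prox_f=\PR_V(2\prox_f-\Id)+\Id-\prox_f$, as claimed; this step is purely mechanical. Next, for \ref{prox:n:2} and \ref{prox:n:3} I would invoke the subdifferential corollary with $g=\iota_V$, reducing the task to identifying $\dom g=V$ and $\dom g^*$. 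The crucial fact here is that $\dom\iota_V^*=\dom\sigma_V$, and I would show $\overline{\dom\sigma_V}=(\rec V)^\ominus$; more precisely, since $\ran N_V=\dom(N_V)^{-1}=\dom\pt\iota_V^*$ and, by Fact~\ref{rec:PC} together with \eqref{inv:r:d}, $\overline{\ran}\,N_V=\overline{\ran}(\Id-\PR_V)=(\rec V)^\ominus$, near equality gives $\dom g^*=\ran N_V\simeq(\rec V)^\ominus$. Substituting $\dom f-V$ and $\dom f^*+(\rec V)^\ominus$ into \ref{sub:diff:case} yields \ref{prox:n:2}, and the analogous substitution into \ref{sub:diff:T} (with the roles $\dom g^*\simeq(\rec V)^\ominus$ and $\dom g=V$) yields \ref{prox:n:3}.

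Finally, for the linear-subspace case \ref{prox:n:4} and \ref{prox:n:5} I would specialize $V$ to a closed linear subspace. Then $-V=V$, so $\dom f-V=\dom f+V$; moreover $\rec V=V$ and the polar cone collapses to the orthogonal complement, $(\rec V)^\ominus=V^\ominus=V^\perp$, since the polar of a subspace is its annihilator. Substituting these two simplifications into \ref{prox:n:2} and \ref{prox:n:3} produces \ref{prox:n:4} and \ref{prox:n:5} immediately. I expect the main obstacle to be the identification $\dom\iota_V^*\simeq(\rec V)^\ominus$: one must be careful that $\dom\sigma_V=\dom\iota_V^*$ need not be closed or convex as stated, so the argument should proceed through the near-equality machinery (Fact~\ref{ri:part} applied to the nearly convex sets $\ran N_V$ and $(\rec V)^\ominus$) rather than claiming literal set equality, using that $\ran N_V$ is nearly convex by Fact~\ref{D-R-nc} and that $\overline{\ran}\,N_V=(\rec V)^\ominus$ is already closed and convex as a polar cone. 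Everything else is bookkeeping that follows from the corollaries cited above.
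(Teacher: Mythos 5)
Your proposal is correct and takes essentially the same route as the paper: both arguments hinge on the key identification $\ran N_V\simeq(\rec V)^\ominus$ (obtained exactly as you describe, from \eqref{inv:r:d}, Fact~\ref{rec:PC} and Fact~\ref{ri:part}), the identifications in \eqref{f:p:c:l}, and the near-equality calculus (Fact~\ref{prox:part}, Corollary~\ref{eq:int}) to substitute these sets into the main range formulas, with \ref{prox:n:4}--\ref{prox:n:5} following from $\rec V=V$ and $(\rec V)^\ominus=V^\perp$ when $V$ is a linear subspace. The only cosmetic difference is that you factor items \ref{prox:n:2}--\ref{prox:n:3} through the preceding subdifferential corollary with $g=\iota_V$, whereas the paper applies Theorem~\ref{main:conc} and Corollary~\ref{ran:T} directly to the pair $(\pt f,N_V)$; since that corollary is itself an instance of these theorems, the content is identical.
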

\begin{proof}
Since $\ran N_V$ is nearly convex
and $(\rec V)^{\ominus}$ is convex, 
it follows from \eqref{inv:r:d}, Fact~\ref{rec:PC} 
and Fact~\ref{ri:part} that
\begin{equation}\label{rec:eq}
\ran N_V\simeq (\rec V)^\ominus.
\end{equation}
\ref{prox:n:1}: This follows from \eqref{eq:dual}
and the fact that $J_{N_V}=\PR_V$
and $J_{\pt f}=\prox f$. 
\ref{prox:n:2}: Combine \eqref{f:p:c:l}, \eqref{rec:eq}, 
Theorem~\ref{main:conc}, Fact~\ref{prox:part}
 and Corollary~\ref{eq:int}.
\ref{prox:n:3}: Combine \eqref{f:p:c:l}, \eqref{rec:eq}, 
Corollary~\ref{ran:T}, Fact~\ref{prox:part}
 and Corollary~\ref{eq:int}.
 \ref{prox:n:4} and \ref{prox:n:5}:
 It follows from \cite[Proposition~6.22 and Corollary~6.49]{BC2011}
 that $\rec V=V$ and $(\rec V)^\ominus=V^\perp$.
Combining this with \ref{prox:n:2} and \ref{prox:n:3}, we 
obtain \ref{prox:n:4} and \ref{prox:n:5}, respectively.
\end{proof}

\begin{cor}[two normal cone operators] 
\label{n:cone}
Let $U$ and $V$
be two nonempty closed convex subsets of $X$, and
suppose that $A=N_U$ and that $B=N_V$.
Then the following hold:
\begin{enumerate}
\item\label{n:cone:2}
$\ran (\Id-T_{(A,B)})\simeq (U-V)\cap
((\rec U)^{\ominus}+(\rec V)^{\ominus})$.
\item\label{n:cone:1}
$\ran T_ {(A,B)})\simeq(U-(\rec V)^\ominus)
\cap((\rec U)^{\ominus}+V)$.
\item\label{n:cone:3}
$v_{(A,B)}=\PR_{\overline{U-V}}(0).$
\end{enumerate}
\end{cor}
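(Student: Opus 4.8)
The plan is to observe that $A=N_U$ and $B=N_V$ are the subdifferentials of the indicator functions $\iota_U$ and $\iota_V$, which are proper lower semicontinuous convex since $U$ and $V$ are nonempty closed convex; hence by Fact~\ref{subd:star} both $A$ and $B$ are $3^*$ monotone, placing us directly in the scope of Theorem~\ref{main:conc}\ref{main:c:1} and Corollary~\ref{ran:T}\ref{main:d:1}. The single extra ingredient, used throughout, is the near equality $\ran N_C \simeq (\rec C)^\ominus$, valid for every nonempty closed convex $C$: indeed $\ran N_C$ is nearly convex by Fact~\ref{D-R-nc}, the polar cone $(\rec C)^\ominus$ is convex, and from \eqref{inv:r:d}, the identity $J_{N_C}=\PR_C$, and Fact~\ref{rec:PC} we get $\overline{\ran}\,N_C=\overline{\ran}(\Id-\PR_C)=(\rec C)^\ominus$, so Fact~\ref{ri:part} yields the near equality exactly as in \eqref{rec:eq}. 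I will apply this to both $U$ and $V$.

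For \ref{n:cone:2} I would first invoke Theorem~\ref{main:conc}\ref{main:c:1} to obtain $\ran(\Id-T_{(A,B)})\simeq D\cap R=(U-V)\cap(\ran N_U+\ran N_V)$. Since $\ran N_U\simeq(\rec U)^\ominus$ and $\ran N_V\simeq(\rec V)^\ominus$, Fact~\ref{prox:part} gives $R=\ran N_U+\ran N_V\simeq(\rec U)^\ominus+(\rec V)^\ominus$. I then apply Corollary~\ref{eq:int} with $C_1=U-V=D_1$, $C_2=\ran N_U+\ran N_V$ and $D_2=(\rec U)^\ominus+(\rec V)^\ominus$, whose hypothesis $\ri C_1\cap\ri C_2\neq\fady$ is precisely Lemma~\ref{lem:HB}\ref{lem:HB:1}; transitivity of $\simeq$ then delivers \ref{n:cone:2}. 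Part \ref{n:cone:1} is the same argument with Corollary~\ref{ran:T} in place of Theorem~\ref{main:conc}: it gives $\ran T_{(A,B)}\simeq(U-\ran N_V)\cap(\ran N_U+V)$, and substituting $\ran N_V\simeq(\rec V)^\ominus$ and $\ran N_U\simeq(\rec U)^\ominus$ through Corollary~\ref{eq:int} (with intersection-nonemptiness now being Lemma~\ref{lem:HB}\ref{lem:HB:1} for the pair $(A,B^{-1})$) produces \ref{n:cone:1}.

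For \ref{n:cone:3}, Proposition~\ref{p:kaos} already yields $v_{(A,B)}=\PR_{\overline D\cap\overline R}(0)$ with $\overline D=\overline{U-V}$, so the task reduces to showing that the intersection does not move the projection, i.e.\ $\PR_{\overline{U-V}\cap\overline R}(0)=\PR_{\overline{U-V}}(0)$. The clean route is to prove that $p:=\PR_{\overline{U-V}}(0)$ already lies in $\overline R$; then $p\in\overline D\cap\overline R\subseteq\overline D$, and being the nearest point of $\overline D$ to the origin it is \emph{a fortiori} the nearest point of the smaller set $\overline D\cap\overline R$. To see $p\in\overline R$, I would use polar-cone calculus for closed convex cones, $\overline R=\overline{(\rec U)^\ominus+(\rec V)^\ominus}=(\rec U\cap\rec V)^\ominus$, so it suffices to check $\innp{p,d}\le 0$ for each $d\in\rec U\cap\rec V$. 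The minimal-norm characterization of $p$ gives $\innp{p,e}\ge 0$ for every $e\in\rec(\overline{U-V})$; since $U-V$ is convex with $\rec U-\rec V\subseteq\rec(U-V)\subseteq\rec(\overline{U-V})$, any $d\in\rec U\cap\rec V$ contributes both $d$ and $-d$ to $\rec(\overline{U-V})$, forcing $\innp{p,d}=0$. Hence $p\in(\rec U\cap\rec V)^\ominus=\overline R$, which completes \ref{n:cone:3}.

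The main obstacle is this last projection-collapse step: parts \ref{n:cone:2} and \ref{n:cone:1} are essentially bookkeeping with near equality on top of the two quoted range formulas, whereas \ref{n:cone:3} genuinely needs the recession identity $\overline{(\rec U)^\ominus+(\rec V)^\ominus}=(\rec U\cap\rec V)^\ominus$ together with the observation that the minimal-norm element of $\overline{U-V}$ is automatically orthogonal to the common recession directions $\rec U\cap\rec V$, which is exactly what forces it into $\overline R$.
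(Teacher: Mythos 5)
Your proposal is correct. For parts \ref{n:cone:2} and \ref{n:cone:1} you follow essentially the same route as the paper: reduce to Theorem~\ref{main:conc}\ref{main:c:1} and Corollary~\ref{ran:T}, then trade $\ran N_U$ and $\ran N_V$ for $(\rec U)^\ominus$ and $(\rec V)^\ominus$ via \eqref{rec:eq}, Fact~\ref{prox:part} and Corollary~\ref{eq:int}; you are in fact a bit more explicit than the paper in verifying the hypothesis $\ri C_1\cap\ri C_2\neq\fady$ of Corollary~\ref{eq:int} through Lemma~\ref{lem:HB}\ref{lem:HB:1}, including for the pair $(A,B^{-1})$ in part \ref{n:cone:1}. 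The genuine difference is in part \ref{n:cone:3}. Both arguments start from Proposition~\ref{p:kaos}, which gives $v_{(A,B)}=\PR_{\overline{D}\cap\overline{R}}(0)$, and both conclude by showing that $v:=\PR_{\overline{D}}(0)=\PR_{\overline{U-V}}(0)$ already lies in $\overline{R}$, so that intersecting with $\overline{R}$ does not move the projection. The paper obtains $v\in\overline{R}$ by citing a result on best approximation pairs, \cite[Corollary~2.7]{BCL04}, which localizes $v$ in $\overline{(\PR_U-\Id)(V)}\cap\overline{(\Id-\PR_V)(U)}\subseteq(\rec U)^\oplus\cap(\rec V)^\ominus\subseteq\overline{R}$. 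You instead prove it from scratch: $\overline{R}=\overline{(\rec U)^\ominus+(\rec V)^\ominus}=(\rec U\cap\rec V)^\ominus$ by the bipolar theorem applied to the closed convex cones $\rec U$ and $\rec V$, and the variational characterization of the projection forces $\innp{v,d}=0$ for every $d\in\rec U\cap\rec V$, because both $d$ and $-d$ lie in $\rec(\overline{U-V})$ (via $\rec U-\rec V\subseteq\rec(U-V)\subseteq\rec(\overline{U-V})$ and $0\in\rec U\cap\rec V$). Your route is self-contained and elementary, needing only standard polar-cone calculus and the projection inequality in place of the external citation; the paper's route yields slightly sharper information (membership of $v$ in $(\rec U)^\oplus$ and $(\rec V)^\ominus$ separately, rather than only orthogonality to $\rec U\cap\rec V$), but that extra precision is not needed for the conclusion.
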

\begin{proof}
Clearly $\dom A=U$ and $\dom B=V$.
It follows from \eqref{rec:eq} 
that $\ran N_U\simeq (\rec U)^\ominus$
and $\ran N_V\simeq (\rec V)^\ominus$.
Therefore, Fact~\ref{prox:part} implies that 
\begin{equation}\label{R:n:c}
R \simeq (\rec U)^\ominus+(\rec V)^\ominus.
\end{equation}
Now \ref{n:cone:2} follows from combining 
\eqref{R:n:c} and Theorem~\ref{main:conc},
and \ref{n:cone:1} follows from combining
\eqref{rec:eq} applied to the sets $U$ and $V$,
 Fact~\ref{prox:part}
 and Corollary~\ref{ran:T}.
It remains to show \ref{n:cone:3} is true.
Set $v=\PR_{\overline{U-V}}(0)=\PR_{\overline{D}}(0)$.
On the one hand, by definition of $v$ 
and Proposition~\ref{p:kaos}\ref{main:c:1}, we have 
$v_{(A,B)}\in \overline{D}\cap \overline{R}\subseteq \overline{D}$ 
and hence
\begin{equation}\label{v:v}
\norm{v}\le \norm{v_{(A,B)}}.
\end{equation}
On the other hand,
using {\rm\cite[Corollary~2.7]{BCL04}} we have
$v\in \overline{(\PR_U-\Id)(V)}\cap\overline{(\Id-\PR_V)(U)}
\subseteq (\rec U)^\oplus\cap(\rec V)^\ominus$.
Therefore, using
 \eqref{rec:eq} and that $0\in (\rec U)^\ominus$ we have
$v\in  (\rec U)^\oplus\cap(\rec V)^\ominus
\subseteq (\rec V)^\ominus \subseteq (\rec U)
^\ominus+(\rec V)^\ominus\subseteq\overline{R}.
$
Hence, 
\begin{equation}\label{v:v:v}
v\in \overline{D}\cap \overline{R}.
\end{equation}
Combining \eqref{v:v}, \eqref{v:v:v}
and  Proposition~\ref{p:kaos}\ref{main:c:1}
yields $v=v_{(A,B)}$.
\end{proof}

\subsection{Firmly nonexpansive mappings}

We now restate the main result from the perspective of fixed point
theory.

\begin{cor}\label{fix:p:t}
Let $T_1:X\to X$ and $T_2:X\to X$
be firmly nonexpansive such that  each $T_i$ satisfies
\begin{equation}\label{T:*}
(\forall x\in X)(\forall y\in X) 
\qquad \inf_{z\in X} \innp{T_ix-T_iz,(y-T_iy)-(z-T_iz)}>+\infty,
\end{equation}
and set $T:=T_2(2T_1-\Id)+\Id-T_1$.
Then 
\begin{equation}\label{T1:T2:1}
\ran T\simeq(\ran T_1-\ran(\Id-T_2))\cap(\ran(\Id-T_1)+\ran T_2),
\end{equation}
and
\begin{equation}\label{T1:T2:2}
\ran (\Id-T)\simeq (\ran T_1-\ran T_2)
\cap(\ran(\Id-T_1)+\ran(\Id-T_2)).
\end{equation}
\end{cor}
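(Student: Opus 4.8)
The plan is to recognize $T$ as a Douglas--Rachford operator and then read off the two formulae from the already-established main results. Since $T_1$ and $T_2$ are firmly nonexpansive with full domain $X$, Fact~\ref{T:JA}\,(iii), applied with $A:=T_1^{-1}-\Id$ and $B:=T_2^{-1}-\Id$, guarantees that $A$ and $B$ are maximally monotone and that $T_1=J_A$ and $T_2=J_B$. Consequently $R_A=2J_A-\Id=2T_1-\Id$, so that
\begin{equation*}
T_{(A,B)}=\Id-J_A+J_BR_A=\Id-T_1+T_2(2T_1-\Id)=T.
\end{equation*}
Thus it remains only to rewrite the conclusions of Theorem~\ref{main:conc} and Corollary~\ref{ran:T} in terms of $T_1$ and $T_2$.

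Next I would translate all the data. By Fact~\ref{F:res:ran}, $\dom A=\ran J_A=\ran T_1$ and $\dom B=\ran T_2$, while \eqref{inv:r:d} gives $\ran A=\ran(\Id-J_A)=\ran(\Id-T_1)$ and $\ran B=\ran(\Id-T_2)$. It then remains to match the standing hypothesis \eqref{T:*} with $3^*$ monotonicity (the ``$>+\infty$'' there should read ``$>-\infty$'', since an infimum is at issue). Here the Minty parametrization \eqref{Min:par} is the key: since $\gra A=\{(J_Ax,x-J_Ax)\mid x\in X\}=\{(T_1x,x-T_1x)\mid x\in X\}$, a generic point of $\dom A$ has the form $T_1x$, a generic point of $\ran A$ has the form $y-T_1y$, and a generic graph point is $(T_1z,z-T_1z)$. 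Substituting these into the definition of $3^*$ monotonicity for $A$ reproduces \eqref{T:*} verbatim for $T_1$, and likewise for $B$ and $T_2$. Hence \eqref{T:*} for $T_1$ and $T_2$ is precisely the statement that $A$ and $B$ are $3^*$ monotone.

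With these identifications in hand, the two formulae drop out. For \eqref{T1:T2:2}, Theorem~\ref{main:conc}\ref{main:c:1} yields $\ran(\Id-T)=\ran(\Id-T_{(A,B)})\simeq(\dom A-\dom B)\cap(\ran A+\ran B)$, and substituting the translated sets gives exactly $(\ran T_1-\ran T_2)\cap(\ran(\Id-T_1)+\ran(\Id-T_2))$. For \eqref{T1:T2:1}, Corollary~\ref{ran:T}\ref{main:d:1} (whose hypothesis ``$A$ and $B$ are $3^*$ monotone'' is met) yields $\ran T=\ran T_{(A,B)}\simeq(\dom A-\ran B)\cap(\ran A+\dom B)$, which translates to $(\ran T_1-\ran(\Id-T_2))\cap(\ran(\Id-T_1)+\ran T_2)$. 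The only genuinely delicate point is the verification that \eqref{T:*} is exactly $3^*$ monotonicity of $A$ (resp.\ $B$) expressed through the resolvent, i.e.\ the index bookkeeping in the Minty substitution; everything else is a direct transcription of the already-proved results.
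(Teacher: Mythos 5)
Your proposal is correct and follows essentially the same route as the paper's proof: represent $T_1=J_A$, $T_2=J_B$ via Fact~\ref{T:JA}, identify $T=T_{(A,B)}$ from \eqref{e:done}, translate $\dom A=\ran T_1$, $\ran A=\ran(\Id-T_1)$ (and likewise for $B$) via Fact~\ref{F:res:ran} and \eqref{inv:r:d}, then invoke Theorem~\ref{main:conc} for \eqref{T1:T2:2} and Corollary~\ref{ran:T} for \eqref{T1:T2:1}. The only difference is that where the paper cites \cite[Theorem~2.1(xvii)]{BMW2} to equate \eqref{T:*} with $3^*$ monotonicity of $A$ and $B$, you verify that equivalence directly through the Minty parametrization \eqref{Min:par} (rightly reading the typo $>+\infty$ as $>-\infty$), which is a sound, self-contained substitute for the citation; your substitutions into the range formulae are also the correct ones.
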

\begin{proof}
Using Fact~\ref{T:JA} we conclude that 
there exist maximally monotone operators 
$A:X\rras X$ and $B:X\rras X$
such that 
\begin{equation} 
T_1=\bJ_A \quad \text{and }\quad T_2=\bJ_B.
\end{equation}
Moreover, it follows from {\rm\cite[Theorem~2.1(xvii)]{BMW2}}
 and \eqref{T:*} that
that $A$ and $B$ are $3^*$ monotone.
By \eqref{e:done},  we conclude that 
$T=T_{(A,B)}$. Using Corollary~\ref{ran:T},
Fact~\ref{F:res:ran} and \eqref{inv:r:d}
we have
\begin{align*}
\ran T&\simeq (\dom A-\ran B)\cap(\ran A+\dom B)\\
&=(\ran(\Id-T_1)-\ran T_2)\cap(\ran T_1+\ran(\Id-T_2)).
\end{align*}
That is, \eqref{T1:T2:1} holds true. 
Similarly, one can prove \eqref{T1:T2:2}
by combining Theorem~\ref{main:conc},
Fact~\ref{F:res:ran} and \eqref{inv:r:d}.
\end{proof}

\section{Some infinite-dimensional observations}

\label{S:infd}

In this final section, we provide some results
that remain true in infinite-dimensional settings.
We assume henceforth that 
\begin{empheq}[box=\mybluebox]{equation}
\text{$H$ is a (possibly infinite-dimensional) real Hilbert
space.}
\end{empheq}
A pleasing identity arises when the we are dealing
with normal cone operators of closed subspaces.

\begin{prop}\label{Con:subsp}
Let $U$ and $V$ be closed linear subspaces 
of $H$, and suppose that $A=N_U$ and $B=N_V$. Then  
$\ran({\Id -T_{(A,B)}})=(U+V)\cap ( U^\perp +V^\perp)$.
\end{prop}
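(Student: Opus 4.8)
The plan is to avoid the near-equality machinery altogether and instead read off the range \emph{exactly} from the algebraic characterization \eqref{con:eq}. That characterization is not special to finite dimensions: it simply unwinds $\Id-T_{(A,B)}=J_A-J_BR_A$ via the Minty parametrization, so it remains valid in the possibly infinite-dimensional space $H$. The decisive simplification in the present setting is that the graphs of the two operators are products of subspaces. Since $A=N_U$ and $B=N_V$ with $U,V$ closed linear subspaces, one has $\gra N_U=U\times U^\perp$ and $\gra N_V=V\times V^\perp$. Substituting these into \eqref{con:eq} would give
\[
\ran(\Id-T_{(A,B)})=\{a-b\mid a\in U,\ a^*\in U^\perp,\ b\in V,\ b^*\in V^\perp,\ a-b=a^*+b^*\}.
\]

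From here both inclusions are short. For ``$\subseteq$'' any member $w=a-b$ lies in $U-V=U+V$ (because $-b\in V$) and simultaneously equals $a^*+b^*\in U^\perp+V^\perp$, so $w\in(U+V)\cap(U^\perp+V^\perp)$. For the reverse inclusion I would take an arbitrary $w\in(U+V)\cap(U^\perp+V^\perp)$ and produce the two decompositions \emph{independently}: membership in $U+V$ yields $a\in U$ and $b\in V$ with $a-b=w$, while membership in $U^\perp+V^\perp$ yields $a^*\in U^\perp$ and $b^*\in V^\perp$ with $a^*+b^*=w$. Then $(a,a^*)\in\gra N_U$, $(b,b^*)\in\gra N_V$, and the compatibility condition $a-b=w=a^*+b^*$ holds by construction, so $w$ lies in the set above. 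Because the two decompositions constrain each other in no way, no further conditions arise.

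The point worth flagging, and essentially the only place needing care, is \emph{why exact equality survives} rather than merely the near-equality of Theorem~\ref{main:conc}. The reason is that \eqref{con:eq} describes the range on the nose, the passage to $(U+V)\cap(U^\perp+V^\perp)$ involves no closures or relative interiors, and no $3^*$ monotonicity is invoked; hence the usual finite-dimensional hypotheses become irrelevant. So the main obstacle is not any estimate but simply confirming that \eqref{con:eq} is genuinely algebraic and therefore holds in $H$ (one checks $J_Ax=a$ and $J_BR_Ax=b$ for $x=a+a^*$), supported by the standard subspace identities $J_{N_U}=\PR_U$ and $R_{N_U}=2\PR_U-\Id$. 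With that confirmed, the equality follows.
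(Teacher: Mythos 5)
Your proposal is correct and is essentially the paper's own proof: the paper likewise notes $\gra N_U=U\times U^\perp$, $\gra N_V=V\times V^\perp$ and reads the equality directly off \eqref{con:eq}, which (as you rightly observe, and as the paper's citations to Eckstein and to \cite{Sicon2014} confirm) is a purely algebraic identity valid in any Hilbert space. Your write-up merely makes explicit the two inclusions and the key structural point --- that a product graph decouples the primal and dual decompositions --- which the paper leaves implicit.
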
 
\begin{proof}
Since $\gr N_U=U\times U^\perp$ 
and $\gr N_V=V\times V^\perp$,
the result 
follows from \eqref{con:eq}.
\end{proof}

\begin{prop}\label{cn:ex3}
Let $\U$ and $\E$ be closed linear
subspaces of $H$ such that 
\begin{equation}\label{ee1}
\U^{\perp} \cap \E=\stb{0},
\end{equation}
and suppose that $A=N_{\U}$ and $B=\PR_\E$. 
Then the following hold:
\begin{enumerate}
\item\label{n:ap14:1}
$
\U^\perp\cap\PR^{-1}_\E(\PR_\E(\U^\perp)\setminus \PR_\E(\U))
\subseteq (D\cap R)\setminus \ran (\Id-T_{(A,B)}).
$
\item
\label{n:ap14:2}
$
(\U^{\perp}+\E)\cap(\E+\PR^{-1}_\E(\PR_\E(\U^\perp)
\setminus \PR_\E(\U)))
\subseteq (\ran A+\ran B)\setminus \ran (A+B).
$
\end{enumerate}
Consequently, if 
$\PR_\E(\U^\perp)\setminus \PR_\E(\U)\neq \fady$,
then
\begin{equation}\label{int:con:ex}
\ran(\Id-T_{(A,B)})\subsetneqq D\cap R
\end{equation}
and
\begin{equation}\label{BH:inf:ex}
 \ran (A+B)\subsetneqq \ran A+\ran B.
\end{equation}
\end{prop}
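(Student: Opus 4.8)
The plan is to turn both claims into explicit set computations and then let the single hypothesis \eqref{ee1}, $U^\perp\cap V=\stb{0}$, do all the work. First I would record the elementary data. Since $U$ is a closed subspace, $\gr N_U=U\times U^\perp$, so $\dom A=U$ and $\ran A=U^\perp$; and since $B=\PR_V$ is single-valued, firmly nonexpansive and everywhere defined, it is maximally monotone with $\gr B=\menge{(x,\PR_V x)}{x\in H}$, so $\dom B=H$ and $\ran B=V$. Hence $D=\dom A-\dom B=H$ and $R=\ran A+\ran B=U^\perp+V$, so that $D\cap R=U^\perp+V$. Evaluating the sum on its domain $U$ gives $(A+B)x=N_U x+\PR_V x=U^\perp+\PR_V x$ for $x\in U$, whence $\ran(A+B)=U^\perp+\PR_V(U)\subseteq U^\perp+V=\ran A+\ran B$; this already secures the nonstrict half of \eqref{BH:inf:ex}.

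Next I would read a workable description of $\ran(\Id-T_{(A,B)})$ off the formula \eqref{con:eq}. Substituting the two graphs, a point $w$ lies in this range precisely when there exist $a\in U$, $a^*\in U^\perp$, $b\in H$ with $a-b=w$ and $a^*+\PR_V b=w$. Eliminating $b=a-w$ and $a^*=w-\PR_V b=w+\PR_V w-\PR_V a$ and imposing $a^*\in U^\perp$ collapses this to the criterion that $w\in\ran(\Id-T_{(A,B)})$ if and only if there is some $a\in U$ with $w+\PR_V w-\PR_V a\in U^\perp$. In particular \eqref{easyinc} already gives $\ran(\Id-T_{(A,B)})\subseteq D\cap R$, the nonstrict half of \eqref{int:con:ex}.

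Both \ref{n:ap14:1} and \ref{n:ap14:2} now rest on the observation that any vector lying in both $U^\perp$ and $V$ vanishes, by \eqref{ee1}. For \ref{n:ap14:1} I would take $w$ in the displayed set, so $w\in U^\perp\subseteq U^\perp+V=D\cap R$ and $\PR_V w\notin\PR_V(U)$; were $w$ in the range, the criterion would supply $a\in U$ with $\PR_V w-\PR_V a=(w+\PR_V w-\PR_V a)-w\in U^\perp$, while $\PR_V w-\PR_V a\in V$, so $\PR_V w-\PR_V a\in U^\perp\cap V=\stb{0}$ and hence $\PR_V w=\PR_V a\in\PR_V(U)$, a contradiction. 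For \ref{n:ap14:2} the operative witnesses are the elements $y$ of the displayed set that lie in $V$ with $\PR_V y\in\PR_V(U^\perp)\setminus\PR_V(U)$ (for instance $y=\PR_V w_0$ for any $w_0\in U^\perp$ with $\PR_V w_0\in\PR_V(U^\perp)\setminus\PR_V(U)$): such $y$ lies in $V\subseteq U^\perp+V=\ran A+\ran B$, and if $y=u^*+\PR_V u\in U^\perp+\PR_V(U)=\ran(A+B)$ with $u^*\in U^\perp$ and $u\in U$, then $u^*=y-\PR_V u\in U^\perp\cap V=\stb{0}$, forcing $y=\PR_V u\in\PR_V(U)$, again a contradiction.

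Finally, the two strict inclusions \eqref{int:con:ex} and \eqref{BH:inf:ex} are bookkeeping once the witness sets are nonempty. Assuming $\PR_V(U^\perp)\setminus\PR_V(U)\neq\varnothing$, I would fix $s$ in it and a preimage $w_0\in U^\perp$ with $\PR_V w_0=s$; then $w_0$ witnesses \ref{n:ap14:1} and $\PR_V w_0=s$ witnesses \ref{n:ap14:2}. Combined with the nonstrict inclusions $\ran(\Id-T_{(A,B)})\subseteq D\cap R$ and $\ran(A+B)\subseteq\ran A+\ran B$ from the first two paragraphs, this yields \eqref{int:con:ex} and \eqref{BH:inf:ex}. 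The one genuinely delicate step is the reduction of \eqref{con:eq} to the membership criterion for $\ran(\Id-T_{(A,B)})$; after that, \eqref{ee1} is the entire engine. The main obstacle is conceptual rather than computational: arranging subspaces with $\PR_V(U^\perp)\not\subseteq\PR_V(U)$ while keeping $U^\perp\cap V=\stb{0}$ is impossible in finite dimensions, since $U^\perp\cap V=\stb{0}$ forces $\overline{U+V^\perp}=H$ and hence $U+V^\perp=H$ (so $\PR_V(U^\perp)\subseteq\PR_V(U)$) whenever $U+V^\perp$ is closed, and it is exactly the possible non-closedness of $U+V^\perp$ that makes the phenomenon an infinite-dimensional one.
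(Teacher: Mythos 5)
Your proof is correct and essentially the same as the paper's: you compute $D\cap R=\U^\perp+\E$ and $\ran(A+B)=\U^\perp+\PR_\E(\U)$, reduce membership of $w$ in $\ran(\Id-T_{(A,B)})$ to the existence of $a\in \U$ with $w+\PR_\E w-\PR_\E a\in\U^\perp$ (the paper arrives at the identical criterion via Fact~\ref{chrac:1} and \eqref{defn:Zw} rather than via \eqref{con:eq}), and then let hypothesis \eqref{ee1} force the contradictions. The only genuine difference is your choice of witness for \ref{n:ap14:2} --- $y=\PR_\E w_0\in\E$ instead of the paper's $w_0+\PR_\E w_0$ --- and, exactly like the paper's own argument, you verify \ref{n:ap14:2} only for a family of witnesses inside the displayed set, which is all that the strict inclusions \eqref{int:con:ex} and \eqref{BH:inf:ex} require.
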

\begin{proof}
It is clear that $D\cap R
=(\U-H)\cap (\U^{\perp}+\E)=\U^{\perp}+\E$. 
Notice that \ref{n:ap14:1} and \ref{n:ap14:2}
trivially hold when 
$\PR_\E(\U^\perp)\setminus \PR_\E(\U)=\fady$.
Now suppose that 
$\PR_\E(\U^\perp)\setminus \PR_\E(\U)\neq\fady$.
It follows from \eqref{defn:Zw} and \eqref{ee1} that
 $(\forall w\in \U^\perp\subseteq \U^{\perp}+\E)$
\begin{align}\label{con:p}
Z_w\neq \fady
&\iff(\exists \uu\in \U) \text{~such that~} 
w\in N_\U\uu+\PR_\E (\uu-w)\nonumber\\
&\iff (\exists \uu\in \U) \PR_\E \uu-\PR_\E w\in \U^\perp-w=\U^\perp\nonumber\\
&\iff(\exists \uu\in \U)\PR_\E \uu-\PR_\E w=0\iff \PR_\E w\in \PR_\E(\U).
\end{align}
Now let $w\in \U^\perp\subseteq \U^{\perp}+\E=D\cap R$
such that $ \PR_\E w\not \in \PR_\E(\U)$.
Then \eqref{con:p} implies that
$Z_w=\fady$, hence by 
\eqref{def:zw} $w\not \in \ran (\Id-T_{(A,B)})$,
which proves \ref{n:ap14:1}
and consequently \eqref{int:con:ex}.
To complete the proof we need to show that
\ref{n:ap14:2} holds.
Notice that
 $(\forall \uu^\perp\in \U^\perp)$ 
 $\uu^\perp+\PR_\E\uu^\perp\in
\U^\perp+\E=\ran A+\ran B$.
 It follows from \eqref{ee1} that
\begin{align}\label{con:p:2}
\uu^\perp+\PR_\E\uu^\perp\in \ran (A+B)
&\iff (\exists \uu\in \U= \dom (A+B))~ 
\uu^\perp+\PR_\E\uu^\perp\in 
\U^\perp+\PR_\E\uu\nonumber\\
&\iff(\exists \uu\in \U) \PR_\E\uu^\perp-\PR_\E\uu\in \U^\perp-\uu^\perp=\U^\perp\nonumber\\
&\iff (\exists \uu\in \U)\PR_\E\uu^\perp=\PR_\E\uu.
\end{align}
Now, let 
$\uu^\perp \in\U^\perp$
such that $ \PR_\E \uu^\perp\not \in \PR_\E(\U)$.
Then using \eqref{con:p:2}
$w =\uu^\perp+\PR_\E\uu^\perp\not\in \ran (A+B)$.
Notice that by construction 
$w\in \U^{\perp}+\E=\ran A+\ran B$.
\end{proof}

\begin{rem}
Notice that in Proposition~\ref{cn:ex3}
both $A$ and $B$ are linear relations,
maximally monotone and
$3^*$ monotone operators. 
Consequently, the sets $D$ and $R$
 are linear subspaces of $H$. 
 When $H$ is finite-dimensional,
Corollary~\ref{aff:case} and {\rm\cite[footnote on page~174]{Br-H} }
imply that 
 $\ran (\Id-T)=D\cap R$ and $\ran (A+B)=\ran A+\ran B$. 
 Thus, if 
 \eqref{int:con:ex} or \eqref{BH:inf:ex}
 holds, then $H$ is necessarily infinite-dimensional.
\end{rem}

We now
provide a concrete example 
 in $\ell^2(\NN)$ where 
both \eqref{int:con:ex} and 
\eqref{BH:inf:ex} hold.
This illustrates again the requirement 
of the closure in Fact~\ref{Br-H}.

\begin{prop}\label{cn:ex5}
Suppose that $H=\ell^2(\NN)$, 
let $p\in \RR_{++}$, and let $(\alpha_n)_{n\in \NN}$
be a sequence in $\RR_{++}$ such that
\begin{equation}\label{al:asm}
\alpha_n\to 0,
\quad 
\sum_{n=0}^\infty \alpha^{2p-2}_n<+\infty
\quad 
\text{and}
\quad
\sum_{n=0}^\infty \alpha^{2p-4}_n=+\infty.
\end{equation}
Set 
$
\U=\menge{x=(x_n)_{n\in \NN}\in H}{x_{2n+1}=-\alpha_n x_
{2n}}
$
 and $
\E=\menge{x=(x_n)_{n\in \NN}\in H}{x_{2n}=0}
$, and
suppose that $A=N_{\U}$ and $ B=\PR_\E$. 
Then $\PR_\E(\U^\perp)\setminus \PR_\E(\U)\neq \fady$
and hence 
\begin{equation}\label{int:con:ex:d}
\ran(\Id-T_{(A,B)})\subsetneqq D\cap R
\quad\text{and}\quad
 \ran (A+B)\subsetneqq \ran A+\ran B.
\end{equation}
\end{prop}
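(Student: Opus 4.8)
The plan is to recognise this as a concrete instance of Proposition~\ref{cn:ex3}: once I verify that $\U,\E$ are closed subspaces satisfying \eqref{ee1} and that $\PR_\E(\U^\perp)\setminus\PR_\E(\U)\neq\fady$, the two strict inclusions in \eqref{int:con:ex:d} follow verbatim from the ``consequently'' clause of that proposition. So the entire task reduces to a concrete description of the two subspaces together with the exhibition of one point in the advertised set-difference.

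First I would set up coordinates. Writing $(e_k)_{k\in\NN}$ for the standard basis, $\U$ is the closed span of the mutually orthogonal vectors $e_{2n}-\alpha_n e_{2n+1}$, so a point with $x_{2n+1}=-\alpha_n x_{2n}$ lies in $\U$ exactly when $(x_{2n})_n\in\ell^2$ (here I use $\alpha_n\to 0$, so that $1+\alpha_n^2$ is bounded and $\norm{x}^2=\sum_n x_{2n}^2(1+\alpha_n^2)$ is comparable to $\sum_n x_{2n}^2$). Testing orthogonality against each $e_{2n}-\alpha_n e_{2n+1}$ gives $\U^\perp=\menge{y\in H}{y_{2n}=\alpha_n y_{2n+1}}$, and \eqref{ee1} is then immediate, since $y\in\E$ forces $y_{2n}=0$ and hence $y_{2n+1}=y_{2n}/\alpha_n=0$. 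Because $\PR_\E$ kills the even coordinates and fixes the odd ones, I would next record
\begin{equation*}
\PR_\E(\U)=\Menge{w\in\E}{\sum_{n}\frac{w_{2n+1}^2}{\alpha_n^2}<+\infty}
\quad\text{and}\quad
\PR_\E(\U^\perp)=\E.
\end{equation*}
The first holds because for $x\in\U$ the odd coordinate of $\PR_\E x$ is $x_{2n+1}=-\alpha_n x_{2n}$, so a given $w\in\E$ is attained iff its preimage $x_{2n}=-w_{2n+1}/\alpha_n$ lies in $\ell^2$, that is, iff $\sum_n w_{2n+1}^2/\alpha_n^2<+\infty$; the second holds because any $w\in\E$ lifts to $y\in\U^\perp$ via $y_{2n+1}=w_{2n+1}$, $y_{2n}=\alpha_n w_{2n+1}$, with $y\in\ell^2$ by boundedness of $(\alpha_n)$.

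The decisive step is to produce one element of $\PR_\E(\U^\perp)\setminus\PR_\E(\U)$, and this is precisely what the hypotheses \eqref{al:asm} are tailored to deliver. I would take $w\in\E$ with $w_{2n+1}=\alpha_n^{p-1}$. Then $\sum_n w_{2n+1}^2=\sum_n\alpha_n^{2p-2}<+\infty$ by the second condition in \eqref{al:asm}, so $w\in\E=\PR_\E(\U^\perp)$; whereas $\sum_n w_{2n+1}^2/\alpha_n^2=\sum_n\alpha_n^{2p-4}=+\infty$ by the third condition, so $w\notin\PR_\E(\U)$ by the formula above. Hence $\PR_\E(\U^\perp)\setminus\PR_\E(\U)\neq\fady$, and Proposition~\ref{cn:ex3} delivers \eqref{int:con:ex:d}.

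The one point requiring care is to keep $\PR_\E(\U)$ as the genuine image rather than its closure: the weighted condition $\sum_n w_{2n+1}^2/\alpha_n^2<+\infty$ is a real restriction strictly stronger than the plain $\ell^2$-membership that defines $\E=\PR_\E(\U^\perp)$, and it is exactly $\alpha_n\to 0$ that opens the gap between them. Identifying the witness $w_{2n+1}=\alpha_n^{p-1}$ that lands in this gap, and checking it against the convergence and divergence conditions in \eqref{al:asm}, is the only non-mechanical part of the argument; everything else is bookkeeping.
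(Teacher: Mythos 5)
Your proposal is correct and takes essentially the same route as the paper: verify \eqref{ee1}, exhibit the witness whose odd coordinates are $\alpha_n^{p-1}$ (the paper realizes it inside $\U^\perp$ by also setting $w_{2n}=\alpha_n^p$), use the convergence of $\sum_n\alpha_n^{2p-2}$ and the divergence of $\sum_n\alpha_n^{2p-4}$ to place it in $\PR_\E(\U^\perp)\setminus\PR_\E(\U)$, and then invoke Proposition~\ref{cn:ex3}. Your explicit description of $\PR_\E(\U)$ as a weighted $\ell^2$ condition is just a repackaging of the paper's contradiction argument (which forces $u_{2n}=-\alpha_n^{p-2}$ for a hypothetical preimage $u\in\U$), so the two proofs coincide in substance.
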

\begin{proof}
It is easy to check that
$\U^\perp=\menge{x=(x_n)\in H}{x_{2n+1}
=\alpha_n^{-1} x_{2n}}$. Hence
$\U^\perp\cap \E=\stb{0}$.
Let $w\in H$ be defined as
$(\forall n\in \NN)~w_{2n}=\alpha_n^p$
and  $w_{2n+1}=\alpha_n^{p-1}$. Clearly $w\in \U^\perp$.
We claim that $\PR_\E w\not \in \PR_\E \U$.
Suppose this is not true. Then 
$(\exists \uu \in \U)$ such that 
$\PR_\E w= \PR_\E \uu$.
Hence $(\forall n\in \NN)~\uu_{2n+1}= ( \PR_\E \uu)_{2n+1}
=(\PR_\E w)_{2n+1}= w_{2n+1}=
 \alpha_n^{p-1}$. 
Consequently,
$(\forall n\in \NN)~\uu_{2n}=-\alpha_n^{p-2}$, which is absurd
since it implies that 
$\sum_{n=0}^\infty\uu^2_{2n}
=\sum_{n=0}^\infty \alpha^{2p-4}_n=+\infty$,
by \eqref{al:asm}. Therefore,
 $\PR_\E(\U^\perp)\setminus \PR_\E(\U)\neq \fady$.
 Using Proposition~\ref{cn:ex3}
we conclude that 
\eqref{int:con:ex:d} holds.
\end{proof}

The next example is a special case 
of Proposition~\ref{cn:ex5}.

\begin{ex}\label{cn:ex4}
Suppose that $H=\ell^2(\NN)$,
let $(\alpha_n)_{n\in \NN}=(1/(n+1))_{n\in \NN}$,
let $p\in \left]\tfrac{3}{2},\tfrac{5}{2}\right]$ 
and let  $\U$, $\E$,
$A$ and $B$
be as defined in Proposition~\ref{cn:ex5}. 
Since $2p-2>1$ and $2p-4\le 1$,
 we see that \eqref{al:asm} holds.
From Proposition~\ref{cn:ex5} 
we conclude that 
$\ran (\Id-T_{(A,B)})\subsetneqq D\cap R$
and 
$\ran(A+B)\subsetneqq \ran A+\ran B$.
\end{ex}

When $A$ or $B$ has additional structure, it
may be possible to traverse between $\ran(A+B)$
 and $\ran(\Id-T_{(A,B)})$ as we illustrate now.

\begin{prop}\label{p:las:AB}
Let $A:H\rras H$ and $B:H\rras H$
be maximally monotone.
Then the following hold:
\begin{enumerate}
\item\label{it:las:B}
If $B:H\to H$ is linear, then
$\ran(\Id-T_{(A,B)})=J_B(\ran(A+B))$
and
$\ran(A+B)=(\Id+B)\ran(\Id-T_{(A,B)})$.
\item\label{it:las:A}
If $A:H\to H$ is linear and $Id-A$ is invertible, then
$\ran(\Id-T_{(A,B)})=(\Id-A)^{-1}(\ran(A+B))$
and $\ran(A+B)=(\Id-A)\ran(\Id-T_{(A,B)})$.
\item\label{it:las:AB}
If $A:H\to H$ and $B:H\to H$ 
are linear and $A^*=-A$, then
$(\forall \lam \in [0,1])$
$\ran(\Id-T_{(A,B)})=J_{\lam A^*+(1-\lam)B}(\ran(A+B))$. 
\end{enumerate}
\end{prop}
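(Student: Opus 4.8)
The plan is to route all three parts through the graph description \eqref{con:eq}, which I first recast in the working form
\begin{equation}\label{e:star}
w\in\ran(\Id-T_{(A,B)})\iff(\exists x\in H)\quad w\in Ax+B(x-w).\tag{$\star$}
\end{equation}
Indeed, given such an $x$ with selections $a^\ast\in Ax$ and $b^\ast\in B(x-w)$ summing to $w$, the pairs $(x,a^\ast)\in\gr A$ and $(x-w,b^\ast)\in\gr B$ satisfy $a-b=x-(x-w)=w=a^\ast+b^\ast$, which is exactly the constraint in \eqref{con:eq}; the converse reparametrization $x:=a$ is identical. Since \eqref{con:eq} is purely graph-theoretic it remains valid in the present, possibly infinite-dimensional, $H$, so \eqref{e:star} is available throughout.

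For \ref{it:las:B} I would substitute the linearity of $B$ into \eqref{e:star}: as $B(x-w)=Bx-Bw$, the inclusion $w\in Ax+Bx-Bw$ is the equation $(\Id+B)w=(A+B)x$, so \eqref{e:star} reads $(\Id+B)w\in\ran(A+B)$. Because $B$ is linear and maximally monotone, $J_B=(\Id+B)^{-1}$ is a linear bijection of $H$, whence $\ran(\Id-T_{(A,B)})=J_B(\ran(A+B))$, and applying $\Id+B=J_B^{-1}$ to both sides yields the second identity. Part \ref{it:las:A} is the mirror image: the substitution $y:=x-w$ together with the linearity of $A$ turns \eqref{e:star} into $(\Id-A)w\in\ran(A+B)$, and the assumed invertibility of $\Id-A$ delivers both displayed equalities at once.

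Part \ref{it:las:AB} is the crux. Now both substitutions are legitimate, so \eqref{e:star} is equivalent to each of $(\Id+B)w\in S$ and $(\Id-A)w\in S$, where $S:=\ran(A+B)$; since $A+B$ is linear, $S$ is a subspace, and $A^\ast=-A$ lets me rewrite the second condition as $(\Id+A^\ast)w\in S$. For $\lambda\in[0,1]$ set $C_\lambda:=\lambda A^\ast+(1-\lambda)B$. As $A,B$ are linear, everywhere defined and maximally monotone they are bounded, and from $\innp{A^\ast w,w}=\innp{w,Aw}=0$ one gets $\innp{C_\lambda w,w}=(1-\lambda)\innp{Bw,w}\ge 0$, so $C_\lambda$ is bounded, linear and monotone, $\Id+C_\lambda$ is coercive, and $J_{C_\lambda}=(\Id+C_\lambda)^{-1}$ is a linear bijection. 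The forward inclusion is immediate: if $w\in\ran(\Id-T_{(A,B)})$, then $(\Id+A^\ast)w$ and $(\Id+B)w$ both lie in $S$, hence so does $(\Id+C_\lambda)w=\lambda(\Id+A^\ast)w+(1-\lambda)(\Id+B)w$. For the reverse inclusion I would put $a:=(\Id+B)w$, $b:=(\Id+A^\ast)w$ and record the decisive identity $a-b=(B-A^\ast)w=(A+B)w\in S$; then from $(\Id+C_\lambda)w=(1-\lambda)a+\lambda b\in S$ and $\lambda(a-b)\in S$ I recover $a=\bigl[(1-\lambda)a+\lambda b\bigr]+\lambda(a-b)\in S$, i.e.\ $(\Id+B)w\in S$, which by the reformulation in \ref{it:las:B} means $w\in\ran(\Id-T_{(A,B)})$. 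This gives $\ran(\Id-T_{(A,B)})=J_{C_\lambda}(\ran(A+B))$ for every $\lambda\in[0,1]$.

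The routine content lives in \ref{it:las:B} and \ref{it:las:A}, each a single change of variables inside \eqref{e:star}. The hard part will be \ref{it:las:AB}: the only way I see to pass from the lone membership $(\Id+C_\lambda)w\in S$ back to $(\Id+B)w\in S$ is to exploit that the \emph{difference} of the two shift operators is precisely $A+B$, so that $a-b$ lands automatically in $S$; this is exactly where skew-adjointness $A^\ast=-A$ (not mere linearity) is indispensable, and verifying monotonicity of $C_\lambda$ is the one place that forces the restriction $\lambda\in[0,1]$.
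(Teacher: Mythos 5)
Your proof is correct, and it rests on the same foundation as the paper's: your $(\star)$ is precisely the paper's \eqref{e:uni}, which the paper extracts from Fact~\ref{chrac:1} and \eqref{def:zw} rather than from \eqref{con:eq} (an immaterial difference), and your parts \ref{it:las:B} and \ref{it:las:A} coincide with the paper's substitution arguments. The genuine divergence is in part \ref{it:las:AB}. There the paper makes one further substitution: starting from $w=Ax+B(x-w)$ it computes, for each $\lam\in[0,1]$, that $(\Id+\lam A^*+(1-\lam)B)w = A(x-\lam w)+B(x-\lam w)\in\ran(A+B)$, so both directions of the equivalence $(\Id+\lam A^*+(1-\lam)B)w\in\ran(A+B)\iff w\in\ran(\Id-T_{(A,B)})$ fall out of a single identity, read forward and then backward via the change of variables $x\leftrightarrow x-\lam w$. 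You instead interpolate between the two endpoint characterizations $(\Id+B)w\in S$ and $(\Id+A^*)w\in S$, where $S:=\ran(A+B)$ --- correctly observing that the second needs only linearity of $A$, not invertibility of $\Id-A$ --- and you recover an endpoint membership from the interpolated one using that $S$ is a subspace together with $a-b=(B-A^*)w=(A+B)w\in S$. Both mechanisms turn on the same identity $B-A^*=A+B$; the paper's computation is shorter, while yours makes explicit where skewness and the subspace structure of $\ran(A+B)$ enter. One economy you could make: the boundedness/coercivity discussion of $C_\lam$ is not load-bearing, since $J_{C_\lam}(S)=(\Id+C_\lam)^{-1}(S)$ can simply be read as the preimage of $S$ under the single-valued map $\Id+C_\lam$ (which is how the paper treats it); monotonicity of $C_\lam$ (which in fact holds for every $\lam\le 1$, not only $\lam\in[0,1]$) matters only for $J_{C_\lam}$ to be an honest resolvent, i.e.\ for the statement's interpretation rather than for the set identity itself.
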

\begin{proof}
Let $w\in X$. It follows from \eqref{def:zw}
that 
\begin{equation}\label{e:uni}
w\in \ran (\Id-T_{(A,B)})\iff (\exists x\in H) 
\quad \text{such that}\quad
w\in Ax+B(x-w).
\end{equation}
\ref{it:las:B}:
It follows from \eqref{e:uni} that 
$w\in \ran (\Id-T_{(A,B)})\iff (\exists x\in H) $
such that $ w\in Ax+Bx-Bw\iff  (\exists x\in H)~ (\Id+B)w=w+Bw\in (A+B)x$
$\iff   (\exists x\in H)~ w\in J_B((A+B)x)$
$\iff w\in J_B(\ran (A+B))$.
Using \cite[Theorem~2.1(ii)\&(iv)]{BMW2}
we learn that $J_B$ is a bijection, hence invertible, 
and $\ran(A+B)=J_B^{-1}\ran(\Id-T_{(A,B)})
=(\Id+B)\ran(\Id-T_{(A,B)})$, as claimed.
\ref{it:las:A}:
It follows from \eqref{e:uni} that 
$w\in \ran (\Id-T_{(A,B)})\iff (\exists x\in H) $
such that
$w-Aw\in A(x-w)+B(x-w)=(A+B)(x-w)$
$\iff  (\exists x\in H)~(\Id-A)w\in (A+B)(x-w)$
$\iff  (\exists x\in H)~w\in (\Id-A)^{-1}((A+B)(x-w))$
$\iff w\in (\Id-A)^{-1}(\ran (A+B))$.
Since
$\Id-A$ is invertible, we learn that $\Id-A$ is a bijection
and $\ran(A+B)=(\Id-A)\ran(\Id-T_{(A,B)})$, as claimed.
\ref{it:las:AB}:
It follows from \eqref{e:uni}
that $w\in \ran (\Id-T_{(A,B)})\iff (\exists x\in H) $ such that
$(\forall \lam \in [0,1])$
$w-\lam Aw+(1-\lam)Bw
=A(x-\lam w)+B(x-w+(1-\lam )w)\in (A+B)(x-\lam w)$
$\iff (\Id+\lam A^*+(1-\lam )B)w\in \ran(A+B)$
$\iff w\in J_{\lam A^*+(1-\lam)B}(\ran(A+B))$.
\end{proof}

\section*{Acknowledgments}
HHB was partially supported by the Natural Sciences and Engineering
Research Council of Canada and by the Canada Research Chair Program.
WLH was partially supported by the Natural Sciences and Engineering
Research Council of Canada. WMM
was partially supported by the Natural Sciences and Engineering
Research Council of Canada of HHB and WLH. 

\small


\begin{thebibliography}{999}
\bibitem{AT}
H.\ Attouch and M.\ Th\'era,
A general duality principle for the sum of two operators,
\emph{Journal of Convex Analysis} 3 (1996), 1--24.

\bibitem{Sicon2014}
H.H.\ Bauschke, W.L.\ Hare, and W.M.\ Moursi, Generalized solutions 
for the sum of two maximally monotone operators, 
\emph{SIAM Journal on Control and Optimization}~52 (2014),
1034--1047. 

\bibitem{JAT2012} H.H.\ Bauschke, R.I.\ Bo\c{t}, 
W.L.\ Hare, W.M.\ Moursi,
Attouch--Th\'era 
duality revisited: paramonotonicity and operator splitting, 
\emph{Journal of Approximation Theory}~164 (2012), 1065--1084.

\bibitem{BYW12}
 H.H.\ Bauschke, X.\ Wang, and L.\ Yao: 
Rectangularity and paramonotonicity of 
maximally monotone operators, 
\emph{Optimization}~63 (2014), 487--504. 

\bibitem{BC2011}
H.H.\ Bauschke and P.L.\ Combettes,
\emph{Convex Analysis and Monotone 
Operator Theory in Hilbert Spaces},
Springer, 2011.


\bibitem{BCL04}
H.H.\ Bauschke, P.L.\ Combettes,
and D.R.\ Luke,
Finding best approximation pairs relative to two
closed convex sets in Hilbert spaces,
\emph{Journal of Approximation Theory}
~127 (2004), 178--192.

\bibitem{BMW2}
H.H.\ Bauschke, S.M.\ Moffat, and X.\ Wang: 
Firmly nonexpansive mappings and maximally monotone operators: 
correspondence and duality, 
\emph{Set-Valued and Variational Analysis}~20 (2012), 131--153.

\bibitem{BMW}
H.H.\ Bauschke, S.M.\ Moffat, and X.\ Wang: 
Near equality, near convexity, 
sums of maximally monotone operators, 
and averages of firmly nonexpansive mappings, 
\emph{Mathematical Programming (Series B)}~139 (2013), 55--70. 

\bibitem{Bot2008}
R.I.\ Bo\c{t}, G.\ Kassay, and G.\ Wanka: Duality for almost convex optimization problems via
the perturbation approach, \emph{Journal of Global Optimization}~42 (2008), 385--399.

\bibitem{Brezis}
H.\ Brezis,
\emph{Operateurs Maximaux Monotones et
Semi-Groupes de Contractions dans les Espaces de Hilbert},
North-Holland/Elsevier, 1973. 

\bibitem{Br-H} H.\ Brezis and A.\ Haraux, 
Image d'une Somme d'op\'{e}rateurs Monotones et Applications,
\emph{Israel Journal of Mathematics}~23 (1976), 165--186.

\bibitem{BurIus}
R.S.\ Burachik and A.N.\ Iusem,
\emph{Set-Valued Mappings and Enlargements
of Monotone Operators},
Springer, 2008.


\bibitem{Comb04}
P.L.\ Combettes,
Solving monotone inclusions via compositions of nonexpansive averaged
operators,
\emph{Optimization}~53 (2004), 475--504.

\bibitem{Comb09}
P.L.\ Combettes,
Iterative construction of the resolvent of a sum of maximal
monotone operators,
\emph{Journal of Convex Analysis}~16 (2009), 727--748.

\bibitem{Comb07}
P.L.\ Combettes and J.-C.\ Pesquet,
A Douglas-Rachford splitting approach to nonsmooth convex 
variational signal recovery, 
\emph{IEEE Journal of Selected Topics in Signal Processing}~1 (2007),
564--574.

\bibitem{Comb08}
P.L.\ Combettes and J.-C.\ Pesquet,
A proximal decomposition method for solving convex variational inverse problems, 
\emph{Inverse Problems}~24 (2008), article ID 065014. 

\bibitem{EckThesis}
J.\ Eckstein,
\emph{Splitting Methods for Monotone Operators with
Applications to Parallel Optimization},
Ph.D.~thesis, MIT, 1989.

\bibitem{EckBer}
J.\ Eckstein and D.P.\ Bertsekas,
On the Douglas--Rachford splitting method
and the proximal point algorithm for maximal monotone
operators,
\emph{Mathematical Programming (Series~A)}~55 (1992), 293--318.
%

\bibitem{L-M97}
 P.L.\ Lions and B.\ Mercier, Splitting algorithms for the sum of two
nonlinear operators.
\emph{SIAM Journal on Numerical Analysis}~16(6) (1979), 964--979. 

\bibitem{Minty}
G.J.\ Minty,
Monotone (nonlinear) operators in Hilbert spaces,
\emph{Duke Mathematical Journal} 29 (1962), 341--346.

\bibitem{Sarah}
S.M.\ Moffat, 
Ph.D.~thesis, in preparation.

\bibitem{MMW}
S.M.\ Moffat,
W.M.\ Moursi,
and X.\ Wang,
Calculus rules for near equality and near convexity,
in preparation. 

\bibitem{Rock70}
R.T.\ Rockafellar,
\emph{Convex Analysis},
Princeton University Press, Princeton, 1970.

\bibitem{Rock98}
R.T.\ Rockafellar and R. J-B\ Wets,
\emph{Variational Analysis},
Springer-Verlag, 
corrected 3rd printing, 2009.
%
\bibitem{Rock:mm}
R.T.\ Rockafellar,
On the maximality of sums of nonlinear monotone operators. 
\emph{Transactions of the AMS}~149 (1970), 7--88.

\bibitem{Simons1}
S.\ Simons,
\emph{Minimax and Monotonicity},
Springer-Verlag,
1998.

\bibitem{Simons2}
S.\ Simons,
\emph{From Hahn-Banach to Monotonicity},
Springer-Verlag,
2008.

\bibitem{Zalinescu}{C.\ Z\u{a}linescu},
\emph{Convex Analysis in General Vector Spaces},
World Scientific Publishing, 2002.

\bibitem{Zar71:1}
E.H. Zarantonello, 
Projections on convex sets in 
Hilbert space and spectral theory, in: E.H.
Zarantonello (Ed.), \emph{Contributions to Nonlinear 
Functional Analysis}, Academic Press, New York,
(1971),  237--424.

\bibitem{Zeidler2a}
E.\ Zeidler,
\emph{Nonlinear Functional Analysis and Its Applications II/A:
Linear Monotone Operators},
Springer-Verlag, 1990.

\bibitem{Zeidler2b}
E.\ Zeidler,
\emph{Nonlinear Functional Analysis and Its Applications II/B:
Nonlinear Monotone Operators},
Springer-Verlag, 1990.

\bibitem{Zeidler1}
E.\ Zeidler,
\emph{Nonlinear Functional Analysis and Its Applications I:
Fixed Point Theorems},
Springer-Verlag, 1993.
\end{thebibliography}
\end{document}